\newtheorem{theorem}{Theorem}[section]
\newaliascnt{lemma}{theorem}
\newtheorem{lemma}[lemma]{Lemma}
\newaliascnt{corollary}{theorem}
\newtheorem{corollary}[corollary]{Corollary}
\newaliascnt{proposition}{theorem}
\newtheorem{proposition}[proposition]{Proposition}
\newaliascnt{potato}{theorem}
\newaliascnt{definitionlemma}{theorem}
\newaliascnt{conjecture}{theorem}
\newaliascnt{question}{theorem}
\theoremstyle{definition}
\newaliascnt{definition}{theorem}
\newtheorem{definition}[definition]{Definition}
\newaliascnt{remark}{theorem}
\newtheorem{remark}[remark]{Remark}
\newaliascnt{example}{theorem}
\newenvironment{example}
  {\pushQED{\qed}\examplex}
  {\popQED\endexamplex}
\newaliascnt{notation}{theorem}
\definecolor{darkblue}{rgb}{0.6,0,0.1}
\tikzset{>=stealth',
  head/.style = {fill = white, text=black},
  plaque/.style = {draw, rectangle, minimum size = 10mm}, 
  pil/.style={->,thick},
  junct/.style = {draw,circle,inner sep=0.5pt,outer sep=0pt, fill=black}
  }
\newcommand{\thickslash}{\mathbin{\!\!\pmb{\fatslash}}}
\newcommand{\Z}{\mathbb{Z}}
\newcommand{\bG}{\mathbb{G}}
\newcommand{\bP}{\mathbb{P}}
\newcommand{\bA}{\mathbb{A}}
\newcommand{\bZ}{\mathbb{Z}}
\newcommand{\cG}{\mathcal{G}}
\newcommand{\cZ}{\mathcal{Z}}
\newcommand{\cX}{\mathcal{X}}
\newcommand{\cU}{\mathcal{U}}
\newcommand{\cY}{\mathcal{Y}}
\newcommand{\cF}{\mathcal{F}}
\newcommand{\cO}{\mathcal{O}}
\newcommand{\cB}{\mathcal{B}}
\newcommand{\cC}{\mathcal{C}}
\newcommand{\oH}{\operatorname{H}}
\newcommand{\cI}{\mathcal{I}}
\newcommand{\cL}{\mathcal{L}}
\newcommand{\cR}{\mathcal{R}}
\newcommand{\cS}{\mathcal{S}}
\newcommand{\cT}{\mathcal{T}}
\newcommand{\cA}{\mathcal{A}}
\newcommand{\cM}{\mathcal{M}}
\newcommand{\fppf}{\operatorname{fppf}}
\newcommand{\bmu}{\pmb{\mu}}
\DeclareMathOperator{\Out}{Out}
\DeclareMathOperator{\Aut}{Aut}
\DeclareMathOperator{\Isom}{Isom}
\DeclareMathOperator{\GL}{GL}
\DeclareMathOperator{\Hom}{Hom}
\DeclareMathOperator{\Spec}{Spec}
\DeclareMathOperator{\Tor}{Tor}
\DeclareMathOperator{\Frac}{Frac}
\DeclareMathOperator{\Band}{Band}
\newcommand{\spec}{\operatorname{Spec}}
\newcommand{\Gm}{\mathbb{G}_m}
\newif\ifhascomments \hascommentstrue
  \newcommand{\matt}[1]{{\color{red}[[\ensuremath{\spadesuit\spadesuit\spadesuit} #1]]}}
  \newcommand{\dori}[1]{{\color{blue}[[\ensuremath{\clubsuit\clubsuit\clubsuit} #1]]}}
  \newcommand{\elden}[1]{{\color{blue}[[\ensuremath{\clubsuit\clubsuit\clubsuit} #1]]}}
  \newcommand{\elden}[1]{}
  \newcommand{\dori}[1]{}
  \newcommand{\matt}[1]{}
\begin{document}

\title{Root stack valuative criterion for good moduli spaces}

\author{Dori Bejleri}
\thanks{DB was partially supported by NSF grant DMS-2401483.}
\address[DB]{Department of Mathematics, University of Maryland, College Park, MD, 20742}
\email{dbejleri@umd.edu}

\author[Giovanni Inchiostro]{Giovanni Inchiostro}
	\address[GI]{Department of Mathematics, University of Washington, Seattle, Washington, USA}
	\email{ginchios@uw.edu}

\author{Matthew~Satriano}
\thanks{MS was partially supported by a Discovery Grant from the
  National Science and Engineering Research Council of Canada and a Mathematics Faculty Research Chair from the University of Waterloo.}
\address[MS]{Department of Pure Mathematics, University
  of Waterloo, Waterloo ON N2L3G1, Canada}
\email{msatrian@uwaterloo.ca}

\date{\today}
\keywords{}
\subjclass[2020]{}

\begin{abstract} We prove a root stack valuative criterion for good moduli space maps and for gerbes for reductive groups under some mild assumptions on the residue characteristic. We give several applications to parahoric extension for torsors, rational points on stacks, gerbes and homogeneous spaces, and the geometry of fibrations.
\end{abstract}

\maketitle

\numberwithin{theorem}{section}
\numberwithin{lemma}{section}
\numberwithin{corollary}{section}
\numberwithin{proposition}{section}
\numberwithin{conjecture}{section}
\numberwithin{question}{section}
\numberwithin{remark}{section}
\numberwithin{definition}{section}
\numberwithin{example}{section}
\numberwithin{notation}{section}
\numberwithin{equation}{section}

\section{Introduction}

The goal of this paper is to prove the following root stack valuative criterion for good moduli spaces. 

\begin{theorem}\label{thm:main}
    Let $\cX\to X$ be a good moduli space where $\cX$ is an Artin stack with affine diagonal and of finite type over a locally Noetherian scheme $S$. Let $R$ be a DVR with fraction field $K$ and residue field $k$. Given a commutative diagram of solid arrows
    \[
    \xymatrix@R=4em@C=4em{
        &\spec K \ar@{^{(}-->}[dl]\ar[d]\ar[r] & \cX\ar[d]\\
        \sqrt[n]{\Spec R} \ar@{-->}[r] \ar@{-->}[rru]&\spec R\ar[r] & X
    }
    \]
    there exists a root stack $\sqrt[n]{\spec R} \to\spec R$ and dotted arrows filling in the diagram. Moreover, if $\spec K\to \cX$ maps to the closed point of $\cX\times_X K$, then one can choose an extension such that also the closed point of $\sqrt[n]{\spec R}$ maps to the closed point of $\cX\times_X k$.
\end{theorem}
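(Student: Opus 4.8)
The plan is to use the \'etale-local structure theory of good moduli spaces to reduce to a quotient-stack situation, and then to construct the extension by analyzing torsors and equivariant maps directly over the root stack. First I would base change $\cX\to X$ along $\Spec R\to X$; since good moduli space morphisms are stable under arbitrary base change, this reduces to the case $X=\Spec R$. Then $\cX\times_X k\to\Spec k$ is again a good moduli space, so it has a unique closed point $x_0$, whose stabilizer $G_0$ is linearly reductive over the residue field. Lifting $G_0$ to a linearly reductive group scheme $G$ over $\Spec R$ (possible after replacing $R$ by an \'etale local extension, which is harmless because forming $\sqrt[n]{\Spec R}$ commutes with it) and applying the \'etale-local structure theorem of Alper--Hall--Rydh --- this is where the affine diagonal and finite type hypotheses enter --- we may assume $\cX=[\Spec A/G]$ with $X=\Spec A^G=\Spec R$. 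In the situation of the ``moreover'' clause one may moreover assume that the given $K$-point lands in the closed $G_K$-orbit of $\Spec A$; replacing $A$ by the coordinate ring of the $R$-flat closure of that orbit, one reduces to the case where the unique closed $G_k$-orbit of $\Spec(A\otimes_R k)$ is the one underlying $x_0$.

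With this reduction, a morphism $\sqrt[n]{\Spec R}\to[\Spec A/G]$ over $\Spec R$ is the datum of a $G$-torsor $\cP$ on $\sqrt[n]{\Spec R}$ together with a $G$-equivariant map $\cP\to\Spec A$, and the given arrow $\Spec K\to\cX$ is such a datum over the generic point. The first step is to extend the torsor. Since $G$ is linearly reductive, by the structure theory of such group schemes it is an extension of a finite \'etale group $F$ of order prime to the residue characteristic by a group $M$ of multiplicative type; one extends the generic torsor by d\'evissage, using that torsors under $M$ extend over a suitable $\sqrt[n]{\Spec R}$ via the tautological $n$-th root of the divisor $(\pi)$ (made explicit through the exact sequences defining $M$), while torsors under $F$ are automatically tamely ramified --- the wild inertia of $\Spec K$ is pro-$p$ and so maps trivially to $F$ --- hence extend over $\sqrt[n]{\Spec R}$ for any $n$ divisible by the order of the tame monodromy. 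The second, and harder, step is to extend the $G$-equivariant map $\cP\to\Spec A$ across the special fibre. Pulling back along the totally ramified cover $\Spec R[w]/(w^n-\pi)\to\sqrt[n]{\Spec R}$ turns $\cP$ into a flat affine scheme over a DVR, and the problem becomes extending an equivariant point of the affine scheme $\Spec A$ over that DVR; the composite to $\Spec A^G=\Spec R$ already extends, being the structure morphism, so the only obstruction is a pole of the coordinates of $\Spec A$ along the special fibre. One removes this pole by a Langton/Kempf semistable-reduction argument: a one-parameter subgroup of $G$ flows the special fibre back toward the closed orbit, a modification supported along the special fibre which twists $\cP$ by a line bundle pulled back from the base and is therefore absorbed by enlarging $n$; the procedure terminates by the usual length/Hilbert-function bound.

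For the ``moreover'' clause, start from any extension $\sqrt[n]{\Spec R}\to\cX$ produced above; its closed point maps to some $y\in|\cX\times_X k|$, which specializes to the closed point $x_0$. In the quotient chart, the $G_k$-orbit underlying $y$ has the closed orbit in its closure, so by the Kempf--Rousseau (Hilbert--Mumford) theorem over the residue field there is a one-parameter subgroup $\lambda$ of $G_k$ with $\lim_{t\to0}\lambda(t)\cdot y$ in the closed orbit; concatenating this degeneration with the existing extension --- the standard ramified-base-change/root-stack operation used in $S$-completeness arguments --- and passing to a further root stack produces an extension whose closed point maps to $x_0$, i.e.\ to the closed point of $\cX\times_X k$. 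The step I expect to be the main obstacle is extending the equivariant map in the second paragraph: forcing $\cP\to\Spec A$ to extend requires controlling the map along the special fibre and carrying out the semistable-reduction/flow-to-the-closed-orbit entirely within the root-stack formalism, simultaneously bookkeeping the stacky base, the $G$-action, and the termination bound; this, rather than the torsor extension, is where the real work lies.
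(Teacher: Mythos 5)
Your proposal has the right coarse outline (reduce to a local quotient presentation, treat the torsor and the orbit map separately, use one-parameter subgroups to reach the polystable locus), but it rests on a false structural claim and skips the two places where the real difficulty lives. The claim that a linearly reductive group scheme ``is an extension of a finite \'etale group $F$ of order prime to the residue characteristic by a group $M$ of multiplicative type'' is the structure theorem for \emph{finite} linearly reductive group schemes; it fails for the stabilizers that actually occur here. In characteristic $0$ every reductive group is linearly reductive, so the closed point can have stabilizer $\mathrm{PGL}_2$, $\mathrm{SO}_n$, etc., and your d\'evissage through $M$ and $F$ says nothing about extending torsors under such groups. This is precisely where the paper works hardest: it produces a maximal torus $T$ of $G$ \emph{over the root stack} (\Cref{prop_there_is_a_max_torus}, which needs Serre's theorem on $\bmu_r$-invariant maximal tori), lifts the generic torsor to the normalizer $N(T)$, and only then runs a d\'evissage through the extension $1\to T\to N\to W\to 1$ --- and this is also where the tame Weyl group hypothesis of \Cref{thm:main2} enters. \Cref{ex:tame_is_necessary} ($\mathrm{PGL}_2$ in residue characteristic $2$) shows the torsor-extension step can genuinely fail, so no argument that never sees the Weyl group can be complete.

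The second gap is your reduction to a \emph{global} presentation $\cX=[\Spec A/G]$ with $\Spec A^G=\Spec R$. The \'etale-local structure theorem only gives this after an \'etale cover of $\Spec R$, and passing to such a cover is not harmless: it changes $K$ and $k$, which is the entire content of the theorem (the paper only ever henselizes, which preserves the residue field, and handles the failure of a global presentation as the nontriviality of a gerbe over the root stack, killed by the $\oH^2$ computations of \Cref{cor:Hitorsionrootstack} and \Cref{prop_gerbe_banded_by_t_trivial_generically_is_trivial_up_to_further_root} together with Bresciani--Vistoli for the tame part). Relatedly, your Langton-type flow for the orbit map is a hope rather than an argument --- the paper instead takes the closure of the polystable generic point and applies Edidin--Rydh canonical reduction of stabilizers to land in an honest gerbe (\Cref{theorem_reduction_to_gerbe}) --- and your appeals to Kempf--Rousseau over $K$ and over $k$ ignore that these fields may be imperfect, which the paper must patch with the cocharacter-closure machinery in \Cref{lemma_kempf_for_lin_reductive}. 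The step you flag as the main obstacle is indeed hard, but the torsor extension you dismiss as routine is at least as serious a gap.
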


Good moduli spaces for algebraic stacks were introduced by Alper in his thesis \cite{Alper}, and are a generalization both of coarse moduli spaces for Deligne-Mumford stacks in characteristic $0$, and of morphisms, appearing in the context of GIT, of the form $[W(\cL)^{ss}/G]\to W/\!\!/_\cL G$, where $W$ is a projective variety with an action of a linearly reductive group $G$, $\cL$ is a $G$-linearized ample line bundle on $W$ and $W(\cL)^{ss}$ is its semistable locus. On a first approximation, these are morphisms $\cX\to X$ which are of the form $[\spec(A)/\GL_N]\to \spec(A^{\GL_N})$ \'etale locally on $X$ rather than Zariski locally, as in classical GIT \cite[Theorem 6.1]{AHRetalelocal}. Good moduli space maps if they exist are universal for maps to algebraic spaces and have very strong topological and cohomological properties. They are a fundamental tool in the study of algebraic stacks with positive dimensional stabilizers. 

The key difference between the root stack valuative criterion \Cref{thm:main} and the usual existence part of the valuative criterion for a good moduli space \cite[Theorem A.8]{AHLH} is that both the fraction field and the residue field of $R$ are preserved by the extension $\sqrt[n]{\spec R} \to \spec R$. This allows for many arithmetic applications (Section \ref{subsection:applications}) and for globalizing the valuative criterion from a DVR to a global curve $C$ (\Cref{prop:main_global} below). For Deligne-Mumford stacks over $\mathbb{C}$, the analogous valuative criterion has been used extensively in the theory of twisted stable maps \cite{AbramovichVistoli, ACV} and fibered surfaces \cite{MR3961332}. Recently it was generalized by Bresciani and Vistoli \cite{bresciani2024arithmetic} to tame stacks in any characteristic and appears in numerous applications including to heights of rational points on tame stacks \cite{ESZB, bejleri2024heightmodulicyclotomicstacks, DardaYasuda}, resolution of indeterminacy for rational maps to stacks \cite{mjjeon}, and classification of singular fibers in fibrations \cite{javier_hyperelliptic}. 

Beyond the realm of algebraic stacks with finite inertia, partial progress was made in \cite[Theorem 3.9]{di2022degenerations} and \cite[Theorem 1.2]{effective}. For the stack of $K$-semistable Fano varieties and of boundary polarized Calabi-Yau pairs, a proof of properness that shows one only needs to adjoin a root of the uniformizer of $R$ was given in \cite{properness_kmoduli_new} and \cite[Remark 7.2]{ABBDILW} respectively, though not necessarily that this can be done $\bmu_n$-equivariantly as in \Cref{thm:main}. See also \cite{warped} for related results in the context of motivic integration. \\

The fact that $\cX$ has a good moduli space is used to reduce to the case that $\cX$ is a gerbe for a smooth connected linearly reductive group scheme but in that case we prove the following result which in positive and mixed characteristic is more general than the setting of a good moduli space. Recall that a group scheme $G$ is special in the sense of Serre if all $G$-torsors can be trivialized Zariski locally.

\begin{remark}\label{rem:representable} We can factor the resulting morphism $\sqrt[n]{\spec R} \to \cX$ through its relative coarse moduli space \cite[Theorem 3.1]{AOV} which is necessarily a root stack of possibly smaller degree \cite[Proposition 3.12]{bresciani2024arithmetic} so without loss of generality we can assume that this filling is representable. 
\end{remark}

\begin{theorem}\label{thm:main2} Let $R$ and $K$ be as above and let $\cX \to \sqrt[n]{\spec R}$ be a gerbe for a reductive group scheme $G \to \sqrt[n]{\spec R}$ over a root stack of $\spec R$. Suppose either that $G$ is special, or that the order of the Weyl groups of the fibers of $G\to \sqrt[n]{\spec R}$ is coprime to the residue characteristic, or $G$ is an extension of the form $1\to G_1\to G\to G_2\to 1$ such that the Weyl groups of the fibers of $G_1\to \sqrt[n]{\spec R}$ have order coprime to the residue characteristic, and such that $G_2$ is special. Then any $K$-point $x_K : \spec K \to \cX$ extends to a section $x : \mathcal{R} \to \cX$ where $\mathcal{R} \to \sqrt[n]{\spec R}$ is a further root stack. 
\end{theorem}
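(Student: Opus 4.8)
I would begin by recording, via Giraud's theory, that a section of $\cX \to \sqrt[n]{\spec R}$ over a stack $\mathcal{R} \to \sqrt[n]{\spec R}$ exists if and only if the banded gerbe class of $\cX$ pulls back to the neutral class over $\mathcal{R}$; the hypothesis that $x_K$ exists says precisely that this class is already neutral over $\spec K$. Since root stacks, the gerbe $\cX$, and the group scheme $G$ are all of finite presentation over $\spec R$, a limit and approximation argument reduces us to the case where $R$ is complete (this does not change the residue field $k$). Throughout, I would work $\bmu_\bullet$-equivariantly, i.e.\ in the \'etale (or fppf) cohomology of the stack $\sqrt[n]{\spec R}$ itself, so that any ramified base change that is produced descends automatically to a root stack $\mathcal{R} = \sqrt[m]{\spec R}$ with $n \mid m$ (a further root stack of $\sqrt[n]{\spec R}$ along its boundary).

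\textbf{Peeling off the centre.} Using $x_K$, set $\cG_K := \uIsom_{\cX_K}(x_K, x_K)$; this is an inner form of $G_K$, hence reductive, with the same Weyl groups as $G$. Spreading it out (after a harmless further root stack if needed) gives a reductive group $\cG$ over $\sqrt[n]{\spec R}$, an inner form of $G$, with $\cG|_{\spec K} \cong \cG_K$. Then $\cX$ and $B\cG$ have the same band, so their difference is a class $\delta \in \oH^2(\sqrt[n]{\spec R}, Z(\cG))$ restricting to $0$ over $\spec K$, where $Z(\cG)$ is of multiplicative type. I would show $\delta$ is killed by a tame root stack using Kummer theory together with the injectivity of the Brauer group of a regular trait into the Brauer group of its generic point (note that in the tame case $p \nmid |W|$ forces $p \nmid |\pi_0 Z(\cG)|$, so all the relevant $\mu_m$'s are \'etale; in the special case one instead argues with $G$ and its centre directly). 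After this we may assume $\cX \cong B\cG$, and the problem becomes: extend the $\cG_K$-torsor $P_K$ corresponding to $x_K$ to a $\cG$-torsor over a further root stack of $\spec R$, compatibly with the canonical descent data over $\spec K$.

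\textbf{Extending the torsor (the heart).} The three hypotheses enter here. If $G$ (hence, using that the gerbe is banded by $G$, also $\cG$) is special, then $\oH^1(K, \cG) = 0$ because special groups have Zariski-locally trivial torsors and $\spec K$ is a field; thus $P_K$ is trivial and extends trivially over $\sqrt[n]{\spec R}$ itself, with no ramification. If the orders of the Weyl groups of the fibers of $\cG$ are prime to the residue characteristic, then $P_K$ is tamely ramified: over the strict henselization it is trivial (a field of cohomological dimension $\le 1$), and Bruhat--Tits theory, applied to the given reductive model $\cG$ over $\spec R$, shows that after a tame totally ramified extension $R \to R_m = R[\pi^{1/m}]$ the torsor $P_K \otimes_K K_m$ becomes unramified, i.e.\ extends to a $\cG$-torsor on $\spec R_m$ --- here one uses that, $R_m$ being complete and $\cG$ smooth, the reduction map $\oH^1(\spec R_m, \cG) \to \oH^1(\spec k, \cG_k)$ is a bijection. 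Finally, for an extension $1 \to G_1 \to G \to G_2 \to 1$ with $G_1$ tame and $G_2$ special, a d\'evissage along $\oH^1(-, G_1) \to \oH^1(-, G) \to \oH^1(-, G_2)$ reduces to the two previous cases: the $G_2$-part of $P_K$ is trivial by the special case, and lifting a trivialization reduces to extending a torsor for a twist of $G_1$, handled by the tame case.

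\textbf{Equivariance and the main obstacle.} Since each $R_m$ above is the coarse space of $\sqrt[m]{\spec R}$ and $P_K \otimes_K K_m$ carries the canonical $\bmu_m$-descent datum for $\spec K = [\spec K_m / \bmu_m]$, and since a $\cG$-torsor on a complete trait is determined by its special fiber, this descent datum propagates over $\spec R_m$; equivalently, I would simply run the previous two steps with $\spec R$ replaced throughout by $\sqrt[n]{\spec R}$ and $\spec R_m$ by $\mathcal{R} := \sqrt[m]{\spec R}$, so that the output is directly a $\cG$-torsor on $\mathcal{R}$, hence a section $x : \mathcal{R} \to \cX$ extending $x_K$. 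I expect the main obstacle to be the tame torsor-extension step: establishing via Bruhat--Tits theory and the given reductive model that a $\cG_K$-torsor becomes the generic fiber of a $\cG$-torsor over the ring of integers of a tame extension, and then threading $\bmu_m$-equivariance through this and through the d\'evissage so that everything descends to the root stack $\mathcal{R}$.
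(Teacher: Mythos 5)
Your overall architecture (neutralize the gerbe by reducing its class to $\oH^2$ of the centre, then extend the resulting torsor, with a d\'evissage for group extensions) matches the paper's, but two of your steps contain genuine gaps. First, the inner twist is both unnecessary and harmful: you set $\cG_K = \uIsom_{\cX_K}(x_K,x_K)$ and later assert that if $G$ is special then so is $\cG$. Inner forms of special groups need not be special --- $\SL_n$ is special but its inner form $\SL_1(D)$ has $\oH^1(K,\SL_1(D)) \cong K^\times/\mathrm{Nrd}(D^\times) \neq 0$ in general --- so the special case of your torsor-extension step fails as written. Moreover, ``spreading out'' $\cG_K$ to a reductive model over a root stack is not harmless: a reductive $K$-group need not extend to a reductive group scheme over any root stack of $\spec R$, and producing such a model is essentially an instance of the theorem being proved. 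The paper avoids both problems by working throughout with the given banding group $G$, which already lives over $\sqrt[n]{\spec R}$: since $\cX$ is banded by $G$, its class differs from that of $\cB G$ by a class in $\oH^2$ of the centre $Z(G)$ directly, with no twist.

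Second, the heart of the tame case --- that a $G_K$-torsor extends to a $G$-torsor over a further root stack when the Weyl groups are tame --- is exactly the step you defer to ``Bruhat--Tits theory,'' and no argument is supplied; note that parahoric extension is an application the paper \emph{derives} from this theorem, so citing it here risks circularity. The paper's actual argument is a second d\'evissage: one first produces a maximal torus $T \subset G$ over the root stack itself (\Cref{prop_there_is_a_max_torus}, a $\bmu_r$-equivariant version of Grothendieck's theorem resting on Serre's result that a $\bmu_r$-action on a reductive group fixes a maximal torus, \Cref{lemma_there_is_a_bmu_point}), lifts the class from $\oH^1(\eta,G)$ to $\oH^1(\eta,N)$ for $N$ the normalizer of $T$, and then uses $1 \to T \to N \to W \to 1$ with $W$ tame, handling $W$ by Bresciani--Vistoli and $T$ by an explicit Kummer argument. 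Relatedly, your treatment of $\oH^2$ of the centre is too quick: the base is a root stack rather than a trait and the torus need not be split, so one needs both a stacky torsion statement for $\oH^2(-,\Gm)$ (\Cref{lemma_torsion_torus_cohom}) and a trace argument after a finite \'etale cover splitting the torus (\Cref{prop_there_is_fet_that_splits_the_torus}--\Cref{cor:Hitorsionrootstack}); also the finite part of the centre can be non-\'etale (e.g.\ $\bmu_p$), and what saves the argument is linear reductivity (tameness of the resulting gerbe), not the parity claim $p \nmid |\pi_0 Z|$. Your d\'evissage for the extension case and the reduction to the (henselian, in the paper) local case are essentially as in the paper.
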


\begin{remark}
    The above theorem is the first step in generalizing \Cref{thm:main} to the case of \emph{adequate moduli spaces} \cite{Alperadequate}, at least under some assumptions. In characteristic $0$, adequate and good moduli spaces are equivalent but in positive and mixed characteristic, adequate moduli spaces are a larger class. For example, if $G$ is reductive and the residue characteristic is positive, then $BG \to S$ is an adequate moduli space but usually not a good moduli space. From \Cref{ex:tame_is_necessary} we can see that the following conditions on the stabilizer $G$ at a closed points seem to be necessary: the reduced connected  $(G_0)_{red} \subset G$ is as in \Cref{thm:main2} and the component group $G/(G_0)_{red}$ is a tame finite group scheme. Even if we impose these assumptions, there are two key places where the proof of \Cref{thm:main} uses the good moduli space rather than adequate moduli space assumption: $1)$ the application of canonical reduction of stabilizers \cite{ER} in \Cref{theorem_reduction_to_gerbe} and in particular the fact that the reduced connected component of the identity behaves well in families, and $2)$ the replacement for Kempf's Theorem when $K$ is not perfect in \Cref{lemma_kempf_for_lin_reductive}.
\end{remark}

Next we globalize the valuative criterion to show that rational maps from a global curve to an Artin stack with a proper good moduli space can be extended after a root stack. \Cref{prop:main_global} yields the existence of \emph{tuning stacks} for $K$-points of $\cX$ in the sense of \cite{ESZB} and is the first step in developing the theory of heights on stacks for Artin stacks with a good moduli space. 

\begin{proposition}\label{prop:main_global}
    Suppose $C$ is a locally excellent, regular, 1-dimensional scheme (e.g. a smooth curve over a field, or a Dedekind domain flat over $\bZ$). Let $\cX$ be a stack as in Theorem \ref{thm:main} with proper good moduli space or a gerbe over a root stack of $C$ as in \Cref{thm:main2}. Then any rational map $\varphi : C \dashrightarrow \cX$ extends to a morphism $f\colon \cC \to \cX$ from some root stack $\cC \to C$ along the complement of a domain of definition for $\varphi$. 
\end{proposition}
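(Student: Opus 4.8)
The plan is to reduce \Cref{prop:main_global} to the local statements \Cref{thm:main} and \Cref{thm:main2}, applied at the points where $\varphi$ is undefined, and then to glue the resulting local solutions. Since $C$ is regular it is a disjoint union of integral schemes, so we may assume $C$ is integral with function field $K$. Fix a domain of definition $U\subseteq C$ for $\varphi$; its complement $Z=C\setminus U$ is closed of dimension $0$, hence a locally finite set of closed points, each of which is an effective Cartier divisor on $C$ since $C$ is regular of dimension $1$. For $p\in Z$ the local ring $R_p:=\cO_{C,p}$ is a DVR with fraction field $K$, and restricting $\varphi$ to the generic point gives a single $K$-point $x_K\colon\spec K\to\cX$.

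I would first produce a local lift at each $p\in Z$. In the good moduli space case, composing $x_K$ with $\cX\to X$ and using that $X$ is proper (over the base, to which $\spec R_p$ maps via $C$), the valuative criterion of properness extends $\spec K\to X$ to $\spec R_p\to X$; plugging the resulting commutative square into \Cref{thm:main} produces an integer $d_p$ and a morphism $\sqrt[d_p]{\spec R_p}\to\cX$ restricting to $x_K$ on the open point $\spec K\hookrightarrow\sqrt[d_p]{\spec R_p}$. In the gerbe case, base changing $\cX$ along $\spec R_p\to C$ gives a gerbe for a reductive group over a root stack of $\spec R_p$ (the Weyl group and specialness hypotheses being stable under base change), so \Cref{thm:main2} applied to $x_K$ produces an integer $d_p$ and a section $\sqrt[d_p]{\spec R_p}\to\cX$ extending $x_K$. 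In either case let $\cC$ be the iterated root stack of $C$ obtained by rooting each divisor $(p)$, $p\in Z$, to order $d_p$; then $\cC\times_C\spec R_p\cong\sqrt[d_p]{\spec R_p}$ and $\cC\times_C U\cong U$.

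Next I would spread out and correct these local lifts. A standard limit argument — using that $\cX$ is locally of finite presentation over its base and that, cofinally, $\sqrt[d_p]{\spec R_p}$ is a cofiltered limit along affine transition maps of root stacks $\sqrt[d_p]{(V,\{p\})}$ over affine opens $V\ni p$ — extends $\sqrt[d_p]{\spec R_p}\to\cX$ to a morphism $\psi_p\colon\cC\times_C V_p\to\cX$ over some affine open $V_p\ni p$, which after shrinking we may assume satisfies $V_p\cap Z=\{p\}$; then $V_p\setminus\{p\}\cong\cC\times_C(V_p\setminus\{p\})$ is an open substack of $\cC\times_C V_p$. Both $\psi_p$ and $\varphi$ now restrict to morphisms $V_p\setminus\{p\}\to\cX$ that agree over $\spec K$. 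The delicate point is that for stacks with positive-dimensional stabilizers this does not force them to agree on all of $V_p\setminus\{p\}$ — for instance two line bundles on a punctured curve can be isomorphic at the generic point but not globally — so one cannot glue naively. Instead, $\Isom_{V_p\setminus\{p\}}(\varphi,\psi_p)\to V_p\setminus\{p\}$ is affine of finite presentation (since $\cX$ has affine diagonal and is locally of finite presentation), $V_p\setminus\{p\}$ is cofinally a cofiltered limit of affine opens along affine transition maps with limit $\spec K$, and the corresponding limit of $\Isom$ schemes is $\Aut_{\cX}(x_K)$, which contains the identity; the limit argument then produces, after further shrinking $V_p$, an isomorphism $\alpha_p\colon\varphi|_{V_p\setminus\{p\}}\xrightarrow{\sim}\psi_p|_{V_p\setminus\{p\}}$.

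Finally I would glue. The opens $U$ and $\{V_p\}_{p\in Z}$ cover $C$, so their preimages cover $\cC$; on this cover we have the morphisms $\varphi$ (precomposed with $\cC\times_C U\cong U$) and the $\psi_p$, identified over the double overlaps $\cC\times_C(U\cap V_p)\cong V_p\setminus\{p\}$ by the $\alpha_p$. Every triple overlap lies in $U$ because $V_p\cap Z=\{p\}$, so the cocycle condition is automatic, and Zariski descent of morphisms to the algebraic stack $\cX$ assembles these into $f\colon\cC\to\cX$ restricting to $\varphi$ over $\cC\times_C U$. Since $\cC\to C$ is an isomorphism over $U=C\setminus Z$ and a root stack along $Z$, this is the desired extension along the complement of a domain of definition. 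I expect the main obstacle to be precisely the correction step — arranging that the local lift agrees with $\varphi$ on a punctured neighborhood of each bad point, not merely at its generic point; everything else is bookkeeping once \Cref{thm:main} and \Cref{thm:main2} are available.
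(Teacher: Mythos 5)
Your proof is correct, and the first half (local lifts at each point of indeterminacy via \Cref{thm:main} or \Cref{thm:main2}, after extending to $\spec R_p \to X$ by properness, then assembling the orders $d_p$ into a global root stack $\cC$) is exactly what the paper does. Where you diverge is the gluing step. The paper observes that the square with vertices $\spec K$, $\sqrt[n]{\spec R_p}$, $U$, $\cC$ is a flat Mayer--Vietoris square and invokes the theorem that such squares are pushouts in algebraic stacks (this is precisely where the local excellence hypothesis on $C$ enters), so the morphisms $U \to \cX$ and $\sqrt[n]{\spec R_p} \to \cX$, which agree on $\spec K$, glue immediately by the universal property. You instead spread out the local solution over a Zariski neighborhood $V_p$ of $p$ using that $\cX$ is locally of finite presentation, then correct the discrepancy with $\varphi$ on $V_p \setminus \{p\}$ via a limit argument on the $\Isom$-sheaf (affine of finite presentation by the affine-diagonal hypothesis), and finally glue Zariski-locally, with the cocycle condition trivialized by arranging $V_p \cap Z = \{p\}$. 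You correctly identify the delicate point: agreement of two maps to a stack at the generic point does not give agreement on a punctured neighborhood, and must be produced by the $\Isom$ argument --- this is exactly the issue the pushout theorem packages away. Your route is longer but more elementary, avoids the Mayer--Vietoris pushout machinery, and does not appear to use the excellence hypothesis; the paper's route is shorter at the cost of citing that black box.
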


\begin{corollary}\label{cor_global} Let $C$ be as in Proposition \ref{prop:main_global} and let $K = k(C)$ be its fraction field. Let $\cX$ be a stack as in Theorem \ref{thm:main} with a proper good moduli space or a gerbe over a root stack of $C$ as in \Cref{thm:main2}. Then any $K$-point $x_K : \spec K \to \cX$ extends to a morphism $\cC \to \cX$ from some root stack of $C$.     
\end{corollary}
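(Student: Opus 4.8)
The plan is to deduce \Cref{cor_global} directly from \Cref{prop:main_global}, the point being that a $K$-point of $\cX$ is the same data as a rational map $C \dashrightarrow \cX$ whose domain of definition contains the generic point of $C$. Note first that since $C$ is regular and admits a fraction field $K = k(C)$, it is integral, so $\Spec K$ is its generic point; in particular $\Spec K \to C$ factors through every dense open of $C$.

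The first step is to spread out $x_K$ to an honest rational map. Working Zariski-locally on $C$ and writing $K$ as the filtered colimit of the coordinate rings of the dense affine opens of $C$ containing the generic point, a standard limit argument — using that $\cX$ is locally of finite presentation, which holds in both cases of the hypothesis (finite type over the locally Noetherian $S$ as in \Cref{thm:main}, or a gerbe over a root stack of the excellent scheme $C$, hence of finite type over $C$) — produces a dense open $U \subseteq C$ and a morphism $x_U \colon U \to \cX$ restricting to $x_K$ over $\Spec K$. This defines a rational map $\varphi \colon C \dashrightarrow \cX$ with $\varphi|_{\Spec K} = x_K$ and whose domain of definition contains the generic point.

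Finally, apply \Cref{prop:main_global} to $\varphi$: it yields a root stack $\cC \to C$ and a morphism $f \colon \cC \to \cX$ extending $\varphi$ along the complement of a domain of definition, which is a set of closed points of $C$ since the domain of definition contains the generic point. As $\cC \to C$ is built from effective Cartier divisors supported at those closed points, it is an isomorphism over the generic point, so pulling $f$ back along $\Spec K \to \cC$ recovers $\varphi|_{\Spec K} = x_K$; thus $f \colon \cC \to \cX$ is the required extension. The only step that asks for any care is the spreading-out of $x_K$, and this is not a genuine obstacle — all the hypotheses provide the necessary finite presentation — so that one could equally well bypass it by feeding the germ of $\varphi$ at the generic point directly into the proof of \Cref{prop:main_global}.
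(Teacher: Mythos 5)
Your proof is correct and matches the paper's argument: the paper likewise spreads out $x_K$ to a morphism $U \to \cX$ on a dense open $U \subseteq C$ using that $\cX$ is locally of finite presentation (citing \cite[\href{https://stacks.math.columbia.edu/tag/0CMX}{Tag 0CMX}]{stacks-project}) and then invokes \Cref{prop:main_global}. The extra verification that the resulting $\cC \to \cX$ restricts to $x_K$ over the generic point is a harmless, correct addition.
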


Finally, Theorem \ref{thm:main} combined with the existence criterion of \cite{AHLH} immediately implies the following relative version. We refer the reader to \emph{loc.cit.} for the definitions of $\Theta$-reductive and $S$-complete morphisms.  

\begin{corollary}
    Let $\pi : \cX \to \cY$ be a finite type morphism of Artin stacks. Suppose that \begin{enumerate}[(i)]
    \item $\pi$ has affine diagonal, 
    \item the relative inertia groups at all closed points of the fibers of $\pi$ are linearly reductive,
    \item $\pi$ is $\Theta$-reductive,
    \item $\pi$ is $S$-complete, and
    \item $\pi$ satisfies the existence part of the valuative criterion of properness. 
    \end{enumerate} 
    Then for any commutative diagram of solid arrows below where $R$ is a DVR with fraction field $K$
    \[
    \xymatrix@R=4em@C=4em{
        &\spec K \ar@{^{(}-->}[dl]\ar[d]\ar[r] & \cX\ar[d]\\
        \sqrt[n]{\Spec R} \ar@{-->}[r] \ar@{-->}[rru]&\spec R\ar[r] & \cY
    }
    \]
    there exists a root stack $\sqrt[n]{\spec R}$ and dotted arrows filling in the diagram.     
\end{corollary}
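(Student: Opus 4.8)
The plan is to reduce to \Cref{thm:main}, using the relative version of the main existence theorem of \cite{AHLH} to produce a relative good moduli space for $\pi$. First I would replace $\cY$ by $\spec R$: the hypotheses (i)--(v) are all stable under the base change $\pi_R\colon \cX_R \to \spec R$ of $\pi$ along the given morphism $\spec R \to \cY$, where $\cX_R := \cX\times_\cY \spec R$. (For (v), a DVR extension over $\spec R$ is in particular one over $\cY$, and the resulting extension to $\cX$ factors through $\cX_R$ by commutativity.) Moreover the given $K$-point together with $\spec K \to \spec R$ defines a $K$-point of $\cX_R$, so we may assume $\cY = \spec R$.

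Next, by the relative existence criterion of \cite{AHLH}, hypotheses (i)--(iv) give a factorization $\cX_R \xrightarrow{p} \cZ \xrightarrow{q} \spec R$ in which $p$ is a good moduli space morphism, $\cZ$ is an algebraic space, and $q$ is separated (this is where $S$-completeness enters) and of finite type. I would then upgrade $q$ to a proper morphism: given a DVR with a map to $\spec R$ and a compatible map of its generic point to $\cZ$, surjectivity of the good moduli space morphism $p$ lifts that generic point to $\cX_R$ after a field extension, hypothesis (v) extends the lift to the whole enlarged DVR, and composing with $p$ yields the sought extension to $\cZ$; hence $q$ is universally closed, thus proper. Applying the valuative criterion of properness to $q$ and to the composite $\spec K \to \cX_R \xrightarrow{p} \cZ$, we obtain a section $\sigma\colon \spec R \to \cZ$ extending it.

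Finally, I would base change $p$ along $\sigma$: set $\cX' := \cX_R \times_{\cZ,\sigma} \spec R$. Since good moduli space morphisms are stable under base change along morphisms of algebraic spaces \cite{Alper}, the projection $p'\colon \cX' \to \spec R$ is a good moduli space morphism; it is of finite type over the Noetherian scheme $\spec R$, and it has affine diagonal because $\cX$ does over $\cY$ by (i) and $\spec R$ is affine. The $K$-point of $\cX_R$ factors through $\cX'$, since it is compatible with $\sigma$ over $\cZ$. Now \Cref{thm:main}, applied to $\cX' \to \spec R$, produces a root stack $\sqrt[n]{\spec R} \to \spec R$ together with morphisms $\spec K \hookrightarrow \sqrt[n]{\spec R}$ and $\sqrt[n]{\spec R} \to \cX'$ completing the diagram; composing with the projections $\cX' \to \cX_R \to \cX$ gives the required dotted arrows, and commutativity over $\cY$ is automatic since $q\circ\sigma = \id$. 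The only steps requiring genuine care are the correct invocation of the relative existence theorem of \cite{AHLH} and the deduction that $q$ is proper from $S$-completeness together with hypothesis (v); everything else is routine bookkeeping about base changes preserving the hypotheses of \Cref{thm:main} (Noetherianity of the base, the good moduli space property, affineness of the diagonal), and since all the genuine content sits inside \Cref{thm:main} I anticipate no substantial obstacle beyond this.
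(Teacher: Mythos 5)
Your proof is correct and takes exactly the route the paper intends: the paper gives no written proof beyond the remark that the corollary ``immediately'' follows from \Cref{thm:main} combined with the existence criterion of \cite{AHLH}. Your write-up --- base change to $\spec R$, invoke the relative existence theorem of \cite{AHLH} for hypotheses (i)--(iv), upgrade the good moduli space to a proper one using (v), extend the section by properness, and then apply \Cref{thm:main} after base changing along that section --- is precisely that deduction, spelled out.
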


\begin{remark}
    Note that assumption $(ii)$ follows from the other assumptions in characteristic $0$. 
\end{remark}

\subsection{Applications}\label{subsection:applications}Throughout this section, $R$ is a DVR with fraction field $K$ and residue field $k$. 

\subsubsection{Torsors and parahoric extension} Consider the special case of \Cref{thm:main2} where $\cX = \cB G$ is the classifying stack of a reductive group over a root stack of $R$. We say that a reductive group has tame Weyl group if its Weyl group has order coprime to the residue characteristics (Definition \ref{def_tame}). Note that some condition on the group is necessary as illustrated by Example \ref{ex:tame_is_necessary}. Then Theorem \ref{thm:main2} implies that every $G$-torsor over $K$ is pulled back from some $G$-torsor over some root stack of $\spec R$. In fact the order of the root stack necessary is also uniformly bounded in terms of $G$. 

\begin{corollary}\label{cor:torsors}
    Let $G \to \sqrt[m]{\spec R}$ be a group over a root stack, with $G$ reductive and with tame Weyl group. Then there exists an integer $n$ depending only on $G$ such that the natural map 
    $$
    \underline{\Hom}(\sqrt[nm]{\spec R}, \cB G) \to \underline{\oH}^1(K,G)
    $$
    is essentially surjective where $\underline{\Hom}$ denotes the groupoid of maps and $\underline{\oH}^1$ denotes the groupoid of torsors.  
\end{corollary}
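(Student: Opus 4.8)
The plan is to apply \Cref{thm:main2} to the trivial gerbe $\cB G\to\sqrt[m]{\spec R}$ and then to promote its conclusion to a \emph{uniform} statement by examining how the root stack is built in that proof. First, $\spec K$ is canonically the generic point of $\sqrt[m]{\spec R}$, so a morphism $\spec K\to\cB G$ over $\sqrt[m]{\spec R}$ is exactly a torsor under the generic fibre $G_K$, and the groupoid of such morphisms is $\underline{\oH}^1(K,G)$; likewise a morphism $\sqrt[nm]{\spec R}\to\cB G$ is a $G$-torsor over the root stack, and the arrow in the statement is restriction to the generic fibre. Now $\cB G\to\sqrt[m]{\spec R}$ is a gerbe banded by the reductive group $G$, which by hypothesis has tame Weyl group — the second of the three alternatives in \Cref{thm:main2} — so that theorem applies: an arbitrary $K$-point $x_K$ of $\cB G$ extends to a section over a further root stack $\mathcal R\to\sqrt[m]{\spec R}$, and any further root stack of $\sqrt[m]{\spec R}$ along its canonical divisor has the form $\sqrt[nm]{\spec R}$. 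Hence $x_K$ lies in the essential image for some $n=n(x_K)$; this already gives essential surjectivity with a torsor-dependent $n$.

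The remaining, and main, point is to make $n$ independent of the torsor. For this I would go through the construction in the proof of \Cref{thm:main2} and check that at each stage the degree of the root stack one is forced to adjoin divides an integer depending only on the root datum of $G$ (and on the residue characteristic of $R$, which is itself determined by $G$). Schematically: the reduction of the structure group of the torsor to a maximal torus is controlled by a flag bundle and contributes a factor dividing a constant built from $|W|$ — this is precisely where the tame Weyl group hypothesis enters, ensuring the contribution is prime to the residue characteristic and so is genuinely realized by a root stack; the torus case is governed by the ramification of a splitting field of the fixed torus $T_K\subset G_K$, whose degree divides a constant depending only on $\rk G$ and is again tame; and the passage through parahoric integral models of $G_K$ contributes the denominators attached to the facets of the affine Dynkin diagram of $G$, of which there are finitely many, all bounded in terms of $G$ (for instance by its Coxeter number). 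Taking the least common multiple of these finitely many torsor-independent quantities yields the desired $n$.

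A cleaner route to the same bound would avoid the case analysis and argue directly with Bruhat--Tits theory: every $G_K$-torsor over $K$ extends to a torsor under one of the finitely many standard parahoric models $\mathcal G_F$ of $G$, and under the dictionary between parahoric group schemes and group schemes on root stacks, a $\mathcal G_F$-torsor is the same datum as a torsor under a reductive group scheme $\mathcal H_F$ on a root stack $\sqrt[e_F]{\spec R}$, where $e_F$ is the denominator of the facet $F$ and $\mathcal H_F$ restricts to $G_K$ generically. Identifying $\mathcal H_F$ with the pullback of $G$ after a bounded further root stack — the crux of this route, using that the two agree generically, up to a correction coming from the component group and a separable residue field extension — then shows that $n$ may be taken to be $\operatorname{lcm}_F(e_F)$ times that correction factor, independently of the torsor.

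The main obstacle in either approach is controlling the residue-field contributions uniformly. As $k$ may be imperfect, the subroutines that over $k$ reduce a torsor to a torus, trivialize it, or run a Kempf-type instability argument — handled in the good moduli space setting by \Cref{lemma_kempf_for_lin_reductive} — must be carried out so that the resulting root-stack degree is bounded purely in terms of $G$, and in particular does not smuggle in the degree of an auxiliary separable extension of $k$ needed to split the relevant torus or Borel. A secondary subtlety is that over $K$ itself the flag bundle of the torsor need not have a rational point, so one must first create one over a bounded tame root stack and then verify that this step does not compound with the later ones in a way that breaks the uniform bound.
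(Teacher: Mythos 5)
Your reduction of the statement to \Cref{thm:main2} plus a uniformity claim is correct, and your identification of the groupoids on both sides is fine. The gap is that you never actually establish the uniformity of $n$, which is the only content of the corollary beyond \Cref{thm:main2}: both of your routes end with explicitly acknowledged unresolved obstacles, and neither matches how the bound is actually obtained. Tracking the proof of \Cref{thm:main2}, as in your first route, runs into the problem that the root-stack degrees appearing there are governed by the torsion orders of classes in $\oH^1(\eta,T)$ and $\oH^2(\cR,T)$ (in \Cref{lemma_case_BT} and \Cref{prop_gerbe_banded_by_t_trivial_generically_is_trivial_up_to_further_root} the degree $d$ is literally the order of $x_\eta$, resp.\ of $c$); to make these uniform you would need a restriction--corestriction argument bounding those orders by the degree of a splitting field of $T$, which you gesture at but do not carry out, and your invocation of denominators of facets of the affine Dynkin diagram corresponds to nothing in that proof. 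Your second route is circular in the paper's logical structure: the statement that every $G_K$-torsor extends to a parahoric torsor is itself deduced from \Cref{thm:main2} (via \Cref{prop_every_gerbe_is_banded}, \Cref{prop:main_global} and \Cref{cor_global}), so it cannot serve as the input here without an independent proof, whose crux you flag but do not resolve.

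The paper's argument is the one recorded in \Cref{rem:bounding}, and it rests on a specific input you are missing: by Chernousov--Gille--Reichstein there is a \emph{finite} subgroup scheme $S\subset G$, an extension of the Weyl group $W$ by a finite diagonalizable group, such that $\oH^1(K,S)\to\oH^1(K,G)$ is surjective. One lifts the given torsor to a $K$-point of $\cB S$; tameness of $W$ makes $\cB S$ a tame stack, so the Bresciani--Vistoli valuative criterion applies directly and bounds the required root-stack degree by the order of $S$, which depends only on $G$. This bypasses the torus and gerbe cohomology entirely. Without this (or a fully worked-out substitute), your proposal only yields essential surjectivity with $n$ depending on the torsor, which is strictly weaker than the statement.
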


Note that if $R$ is strictly henselian, the groupoid of $G$-torsors on $\sqrt[n]{\spec R}$ is equivalent to that of $G$-torsors over $\cB \bmu_{n,k}$. Thus we get an essentially surjective functor 
$$
\underline{\Hom}(\cB \bmu_{n,k}, \cB G) = \left[\Hom(\bmu_{n,k}, G)/G\right] \to \underline{\oH}^1(K,G)
$$
where the action of $G$ is by conjugation. By \cite[Lemma 7.1]{martens2015}, all such homomorphisms come from co-characters of $G$ over the residue field $k$. 

\begin{remark}
Compare \Cref{cor:torsors} with \cite[Lemma 3.5(d)]{ED_specialization}.
\end{remark}
\begin{remark} 
The kernel of the natural map in \Cref{cor:torsors} should be related to the $\frac{1}{n}\mathbb{Z}$-points of the Bruhat-Tits building of $G$ over $K$ (see \cite[Theorem 3.17]{groechenig2025twistedpointsquotientstacks}).
\end{remark}

One of the main interests in \Cref{prop_case_BG} is that it can be interpreted as a parahoric extension theorem for $G$-bundles on $K$. Indeed by work of many authors \cite{balaji_seshadri, pappas_rapoport, MR4703674, gs_parahoric} \cite[Section 5]{sheng2024nonabelianhodgecorrespondenceprincipal}, a reductive group over a root stack $\cR \to \spec R$ is equivalent data to a parahoric model $\cG \to \spec R$ of $G_K$ and the category of parahoric $\cG$-torsors over $R$ is equivalent to the category of $G$-torsors on $\cR$. Thus combining \Cref{prop_every_gerbe_is_banded} with \Cref{prop:main_global} and \Cref{cor_global}, we get the following parahoric exension result. 

\begin{proposition}
    Suppose $G$ is as in \Cref{thm:main2}. Then any $G$-bundle over $K$ extends to a parahoric bundle over $R$. The same holds if $R$ and $K$ are replaced by a global curve as in \Cref{prop:main_global} and its function field respectively. 
\end{proposition}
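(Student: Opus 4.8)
The plan is to deduce this final parahoric extension statement directly from the chain of results already assembled in the paper, so the proof is essentially a matter of bookkeeping the translation between parahoric $\cG$-torsors and $G$-bundles on root stacks. First I would recall the equivalence of categories: by the work of Balaji--Seshadri, Pappas--Rapoport, and the others cited (\cite{balaji_seshadri, pappas_rapoport, MR4703674, gs_parahoric}, \cite[Section 5]{sheng2024nonabelianhodgecorrespondenceprincipal}), giving a reductive group scheme $G$ over a root stack $\cR \to \spec R$ is the same data as giving a parahoric model $\cG \to \spec R$ of the generic fiber $G_K$, and under this dictionary the groupoid of parahoric $\cG$-torsors over $\spec R$ is equivalent to the groupoid of $G$-torsors over $\cR$; in particular a $G$-bundle on $\cR$ restricts to a $G_K$-bundle on $\spec K$, which is the same as a $\cG$-bundle restricted to the generic point. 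So a ``parahoric bundle over $R$'' extending a given $G$-bundle $P_K$ over $K$ is by definition a $G$-torsor on some root stack $\cR \to \spec R$ whose restriction to $\spec K$ is $P_K$.

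Given that translation, the statement becomes: for $G$ as in \Cref{thm:main2}, every $G_K$-bundle over $\spec K$ is the restriction of a $G$-torsor on some root stack of $\spec R$. I would run this as follows. A $G_K$-torsor over $K$ is the same as a map $\spec K \to \cB G_K$, i.e. a $K$-point of the classifying stack. Now $\cB G \to \sqrt[n]{\spec R}$ — where we take the root stack over which $G$ is defined — is a gerbe for the reductive group $G$ (indeed the trivial, or ``neutral,'' gerbe), so it satisfies the hypotheses of \Cref{thm:main2} under exactly the stated conditions on $G$. Applying \Cref{thm:main2} to the $K$-point $x_K : \spec K \to \cB G$ produces a further root stack $\cR \to \sqrt[n]{\spec R}$ together with a section $x : \cR \to \cB G$ extending $x_K$; since a composition of root stacks is again a root stack of $\spec R$, this $x$ is precisely a $G$-torsor on a root stack of $\spec R$ restricting to $P_K$, hence the desired parahoric extension. (Here I am implicitly using \Cref{rem:representable} only if one wants the filling to be representable, which is not needed for the statement.)

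For the global statement, the argument is identical but with \Cref{prop:main_global} (or equivalently \Cref{cor_global}) in place of \Cref{thm:main2}: given a global curve $C$ as in \Cref{prop:main_global} with function field $K = k(C)$ and a $G_K$-bundle over $K$, i.e. a $K$-point $\spec K \to \cB G$, we view this as a rational map $C \dashrightarrow \cB G$ (defined over a dense open), note that $\cB G$ — being a gerbe for a reductive group over a root stack of $C$, or alternatively having $C$ itself as a proper good moduli space in the geometric situations listed — is of the type covered by \Cref{prop:main_global}, and conclude that it extends to a morphism $\cC \to \cB G$ from a root stack $\cC \to C$, which is the sought global parahoric bundle. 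One subtlety to check is that the hypotheses on $G$ in \Cref{thm:main2} are stable under passing to the various root stacks that appear, so that the cited theorems genuinely apply to $\cB G$ over each intermediate root stack; this is automatic since the fibers of the group, and hence their Weyl groups and the specialness hypothesis, are unchanged by base change along a root stack.

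The main obstacle is not in the logic — which is a short unwinding of definitions — but in making the equivalence between parahoric torsors and torsors on root stacks precise and functorial enough to say ``extending $P_K$'' on both sides of the dictionary; in particular one should be careful that the parahoric group scheme $\cG$ attached to $G$ on $\cR$ does not depend on auxiliary choices in a way that obstructs the extension, and that the notion of parahoric bundle used by the various authors cited is the same one. Since the paper is content to invoke this equivalence as established in the literature, the proof can legitimately be one paragraph citing \Cref{thm:main2}, \Cref{prop:main_global}, \Cref{cor_global}, and the parahoric dictionary, together with the observation that root stacks compose.
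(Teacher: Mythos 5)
Your proposal is correct and matches the paper's argument, which is exactly the paragraph preceding the proposition: invoke the cited equivalence between reductive groups over root stacks of $\spec R$ (with their torsors) and parahoric models of $G_K$ (with their torsors), then apply \Cref{thm:main2} (via \Cref{prop_case_BG}) to the neutral gerbe $\cB G$ locally and \Cref{prop:main_global}/\Cref{cor_global} globally. The only point worth noting is that the paper treats this as an immediate corollary of the dictionary and gives no further detail, so your more careful unwinding (including the observation that iterated root stacks of $\spec R$ along the closed point compose) is a faithful elaboration rather than a different route.
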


\begin{example}\label{ex:conic_tame}
Suppose $R$  has residue characteristic $\neq 2$ and let $C \subset \mathbb{P}^2_K$ be a smooth conic which corresponds to a map $\spec K \to \cB PGL_2$. Let $\cC \to \spec R$ be the closure of $C$ in $\bP^2_R$. Then the central fiber is either a smooth conic, a doubled line, or a union of two lines. In the first case, the $PGL_2$-torsor extends to $R$. In the second case, the normalization of the pullback of $\cC$ to $\sqrt[2]{\spec R}$ has good reduction and thus the torsor extends to the second root stack. In the third case, the total space has an $A_n$ singularity at the node of the central fiber for some $n \geq 0$ (where by convention we an $A_0$ singularity is a regular point). If $n = 0$, blowing up the node and contracting the strict transform of the central fiber yields a model with good reduction so we are in the first case. If $n = 1$, blowing up the node and contracting the strict transform of the central fiber yields a model with a double line so we are in the second case. For $n \geq 2$, blowing up the node and contracting the strict transform of the central fiber yields a new central fiber which is still a union of two lines but now has an $A_{n-2}$ singularity. Repeating this operation puts us back in either the first or the second case depending on the parity of $n$. Thus for $\cB \mathrm{PGL}_2$ in odd characteristic, any torsor over $K$ extends up to a degree $2$ root stack. 
\end{example}

\subsubsection{The Lang-Nishimura Theorem for Artin stacks} Following \cite{bresciani2024arithmetic}, we give a generalization of Lang-Nishimura to Artin stacks with a good moduli space. We expect that this will have applications to questions on essential dimension, fields of definition and fields of moduli for Artin stacks as in e.g. \cite{BV2,ed_stacks}.

\begin{proposition}[Lang–Nishimura theorem]\label{prop_LN}
    Let $S$ be a scheme and $\cX\dashrightarrow \cY$ a rational map of algebraic stacks
over $S$, with $\cX$ locally noetherian and integral and $\cY$ admitting a good moduli space $\cY\to Y$ which is proper over $S$. Let
$k$ be a field, $s: \spec k \to S$ a morphism. Assume that $s$ lifts to a regular point
$\spec k\to \cX$; then it also lifts to a morphism $\spec k\to \cY$.
\end{proposition}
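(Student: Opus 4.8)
The plan is to follow the standard Lang--Nishimura strategy, reducing to the case of a DVR where we can apply the root stack valuative criterion, \Cref{thm:main}. First I would recall the classical reduction: by a theorem of Nishimura (or its scheme-theoretic refinement due to Lang, as presented in \cite{bresciani2024arithmetic}), to show that $s : \spec k \to S$ lifts to $\cY$, it suffices to produce, for the given regular lift $\spec k \to \cX$, a chain of specializations realized by local rings along which the rational map $\cX \dashrightarrow \cY$ can be propagated. Concretely, one wants to reduce to the situation where the source is $\spec R$ for a DVR $R$ with residue field $k$ and a map $\spec R \to \cX$ hitting the regular $k$-point at the closed point, together with a $K$-point of $\cY$ over the generic point coming from the rational map; one then needs to produce a $k$-point of $\cY$. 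This is exactly an induction on $\dim_k \cX$ at the chosen point, peeling off one regular parameter at a time, which works because $\spec k \to \cX$ is a regular point so the local ring is a regular local ring and admits such a filtration by regular quotients.

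The core new input is the DVR case: given a DVR $R$ with fraction field $K$ and residue field $k$, a morphism $\spec R \to \cX$ (or just a $\spec K \to \cY$ coming from the rational map composed with $\spec R \to \cX$), and the commutative square mapping $\spec K \to \cY \to Y$ and $\spec R \to S$, I would apply \Cref{thm:main}. Since $\cY$ admits a good moduli space $\cY \to Y$ proper over $S$, and $\cY$ has affine diagonal and is of finite type, the hypotheses of \Cref{thm:main} are satisfied once we check that $\spec R \to S$ together with the $K$-point of $Y$ extends to $\spec R \to Y$: this uses properness of $Y \to S$ and the valuative criterion for proper morphisms of schemes (algebraic spaces). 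Then \Cref{thm:main} produces a root stack $\sqrt[n]{\spec R} \to \spec R$ and a lift $\sqrt[n]{\spec R} \to \cY$ extending the $K$-point. The residue field of (the closed point of) $\sqrt[n]{\spec R}$ is still $k$ --- this is the whole point of the root stack valuative criterion, that residue fields are preserved --- so restricting to the closed point of $\sqrt[n]{\spec R}$ gives a morphism $\spec k \to \cY$ lying over $s : \spec k \to S$, which is what we wanted.

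I would organize the write-up so that the induction is clean: set $n = \dim \cO_{\cX, x}$ where $x : \spec k \to \cX$ is the regular point; if $n = 0$ then $\spec k \to \cX$ is already in the domain of definition (up to shrinking, since a rational map is defined on a dense open and a regular point of a reduced scheme... one must be slightly careful here — the rational map is defined on a dense \emph{open substack}, and the regular $k$-point need not land in it, which is exactly why the induction is needed). For the inductive step, choose a regular parameter and let $R$ be the localization of $\cX$ (pulled back to a suitable chart, or the local ring of an appropriate codimension-one point specializing to $x$) at the corresponding height-one prime, so that $\spec R \hookrightarrow \cX$ has closed point a regular point of the divisor cut out by the remaining parameters, of dimension $n-1$; apply the DVR case to get a $K'$-point of $\cY$ where $K'$ is the fraction field, then use that this $K'$-point together with the $(n-1)$-dimensional regular point assembles into a rational map from an $(n-1)$-dimensional regular source to $\cY$, and invoke the inductive hypothesis.

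The main obstacle I anticipate is bookkeeping rather than a genuine mathematical difficulty: carefully setting up the specialization/induction so that at each stage one genuinely has a regular local ring mapping to $\cX$ with the correct residue field, and so that the $K$-point of $\cY$ at the generic point is compatible with the rational map $\cX \dashrightarrow \cY$ (i.e. agrees with it on the locus where both are defined). A secondary point requiring care is that $\cY \to Y$ being a good moduli space and $Y \to S$ proper are used precisely to guarantee (a) the hypotheses of \Cref{thm:main} hold and (b) the square with $Y$ can be completed; one should also note the output $\spec k \to \cY$ composed with $\cY \to Y \to S$ recovers $s$ because everything in sight is constructed over $S$. Since \cite{bresciani2024arithmetic} carries out exactly this argument for tame stacks using their version of the root stack valuative criterion, the proof here is formally identical with \Cref{thm:main} substituted for their criterion, so I would phrase it as ``the proof of \cite[Theorem ...]{bresciani2024arithmetic} applies verbatim, replacing their valuative criterion with \Cref{thm:main},'' after spelling out the DVR case above.
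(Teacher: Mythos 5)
Your proposal is correct and matches the paper's proof, which likewise reduces to citing \cite[Theorem 4.1]{bresciani2024arithmetic} verbatim with \Cref{thm:main} in place of their valuative criterion. The only point to phrase slightly more carefully is the extraction of the $k$-point at the end: the closed point of $\sqrt[n]{\spec R}$ is a residual gerbe $\cB\bmu_{n,k}$ rather than $\spec k$, so (as the paper does) one should precompose $\sqrt[n]{\spec R}\to\cY$ with the covering $\spec R[t^{1/n}]\to\sqrt[n]{\spec R}$ and the inclusion of its closed point, whose residue field is still $k$.
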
 

The following is a straightforward corollary. 

\begin{corollary}\label{cor:LN_1}
     Suppose that a linearly reductive and special group $G$ acts on an affine scheme $\spec A$ and that the ring of invariants $A^G$ is an $R$-algebra. Assume also that $\spec A\otimes_R K$ has a $K$-point, \textcolor{black}{and the composition $\spec K\to\spec A\to\spec A^G$ extends to $\spec R\to \spec A^G$}. Then $\spec A \otimes_R k$ has a $k$-point.
\end{corollary}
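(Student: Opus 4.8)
The plan is to deduce this directly from the Lang--Nishimura theorem \Cref{prop_LN}. Set $\cY:=[\spec A/G]$. Since $G$ is linearly reductive and acts on the affine scheme $\spec A$, the quotient morphism $\cY\to Y:=\spec A^G$ is a good moduli space \cite{Alper}. As $A^G$ is an $R$-algebra and $A$ is an $A^G$-algebra, $A$ is itself an $R$-algebra and the structure morphism $\spec A\to\spec R$ factors as $\spec A\to\spec A^G\to\spec R$. I would apply \Cref{prop_LN} with base $S:=\spec A^G$, over which $\cY$ has good moduli space $Y=S$, which is proper over $S$ (it is the identity morphism $S\to S$).

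Next I would assemble the rational map. Let $\sigma\colon\spec R\to\spec A^G$ be the extension of $\spec K\to\spec A\to\spec A^G$ furnished by the hypothesis. The composite of $\sigma$ with $\spec A^G\to\spec R$ is an endomorphism of $\spec R$ which over the dense open $\spec K\subset\spec R$ agrees with the structure morphism of the given $K$-point of $\spec A\otimes_R K$, i.e.\ with the canonical open immersion $\spec K\hookrightarrow\spec R$; since $\spec R$ is reduced and separated and $\spec K$ is dense, this forces $\sigma$ to be a section of $\spec A^G\to\spec R$. Regard $\cX:=\spec R$ as an $S$-scheme via $\sigma$. The chosen $K$-point composed with $\spec A\otimes_R K\to\cY$ gives a map $\spec K\to\cY$ whose composite to $S$ is $\sigma|_{\spec K}$ by construction, so it defines a rational map $\varphi\colon\spec R\dashrightarrow\cY$ over $S$, defined on the dense open $\spec K$. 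The closed point $\iota\colon\spec k\hookrightarrow\spec R=\cX$ is a regular point of $\cX$ lying over $s:=\sigma\circ\iota\colon\spec k\to S$, so \Cref{prop_LN} applied to $\varphi$ produces a morphism $\spec k\to\cY=[\spec A/G]$ lifting $s$.

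Finally I would descend this lift to $\spec A$. Since $G$ is special, every $G$-torsor over the field $k$ is trivial; pulling back the universal torsor $\spec A\to[\spec A/G]$ along $\spec k\to[\spec A/G]$ and using a trivializing section yields a $k$-point $p\colon\spec k\to\spec A$ over $\spec k\to[\spec A/G]$. Then $\spec k\xrightarrow{p}\spec A\to\spec A^G$ equals $s=\sigma\circ\iota$; postcomposing with $\spec A^G\to\spec R$ and using that $\sigma$ is a section gives $\spec k\xrightarrow{p}\spec A\to\spec R=\iota$, so $p$ factors through $\spec(A\otimes_R k)$, which is therefore nonempty.

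Given \Cref{prop_LN}, this argument is essentially formal. The one point requiring care, and the only real obstacle here, is to run \Cref{prop_LN} over the base $S=\spec A^G$ rather than over $\spec R$ (so that the good moduli space is proper over the base), together with the observation that $\sigma$ is automatically a section of $\spec A^G\to\spec R$; this is what lets one both regard $\cX$ as an $S$-scheme compatibly with the $K$-point and pin down the residue of $p$ over $\spec R$. All of the genuine content sits in \Cref{prop_LN}, which in turn rests on \Cref{thm:main}.
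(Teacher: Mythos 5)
Your proof is correct and follows essentially the same route as the paper: apply the Lang--Nishimura \Cref{prop_LN} to the rational map $\spec R\dashrightarrow[\spec A/G]$ and then lift the resulting $k$-point of the quotient stack to $\spec A\otimes_R k$ using that $G$ is special. Your choice of base $S=\spec A^G$ (so that the good moduli space is trivially proper over $S$, with the hypothesized extension $\sigma\colon\spec R\to\spec A^G$ supplying the $S$-structure on $\spec R$) is a clean way to make the properness hypothesis of \Cref{prop_LN} literally hold, which the paper's one-line proof leaves implicit.
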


\subsubsection{Gerbes and homogeneous spaces} Next we give some applications of Theorem \ref{thm:main2} to gerbes and homogeneous spaces over DVRs which may be viewed as a version of Grothendieck-Serre for gerbes and non-principal homogeneous spaces. 

\begin{proposition}\label{prop_gs_tame}
    Let $G \to \spec R$ be a reductive group which is either special or has tame Weyl group. Then any class $\cG \in \oH^2(R,G)$ which is trivial over $\Spec K$ becomes trivial after passing to a root stack. In particular, if $\cG|_K = 0$ then $\cG|_k = 0$. 
\end{proposition}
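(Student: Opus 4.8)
The plan is to reduce immediately to \Cref{thm:main2}, using the standard dictionary between triviality of a gerbe and the existence of a section. First I would recall that, in Giraud's formalism, a $G$-gerbe $\cG$ over a base $T$ represents the neutral (distinguished) class of $\oH^2(T,G)$ if and only if $\cG(T)\neq\emptyset$, and that if $\xi\in\cG(T)$ is a section then $\cG|_T\simeq B\,\underline{\Aut}_T(\xi)$ where $\underline{\Aut}_T(\xi)$ is a $T$-form of $G$ carrying the same band as $\cG$; hence such a $\cG|_T$ is indeed the neutral class. So ``$\cG|_K=0$'' is the same as the existence of a $K$-point $x_K\colon\spec K\to\cG$, and ``$\cG$ becomes trivial after a root stack'' will follow once we produce a section of $\cG$ over a root stack of $\spec R$.

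To produce such a section, I would view $\cG\to\spec R$ as a gerbe for the reductive group $G$ over the trivial root stack $\sqrt[1]{\spec R}=\spec R$. Since $G$ is special or has tame Weyl group, this is precisely the first or the second case of the hypothesis of \Cref{thm:main2}, which therefore applies to the $K$-point $x_K$ and yields a root stack $\mathcal{R}\to\spec R$ together with an extension $x\colon\mathcal{R}\to\cG$ of $x_K$. By the first paragraph $\cG|_{\mathcal{R}}$ is neutral, i.e.\ $\cG$ pulls back to $0\in\oH^2(\mathcal{R},G)$, which is the main assertion.

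For the final sentence I would restrict $x$ to the closed fibre. Write $\mathcal{R}_0:=\mathcal{R}\times_{\spec R}\spec k$; this factors $\mathcal{R}_0\to\spec k\hookrightarrow\spec R$, and its reduced closed substack is the classifying stack of a finite group scheme of multiplicative type over $k$ (for an honest $N$-th root stack it is $B\mu_{N,k}$, and iterated root stacks along the special fibre behave the same way), which carries the canonical $k$-point $p$ given by the trivial torsor. Then the composite $\spec k\xrightarrow{p}\mathcal{R}_0\hookrightarrow\mathcal{R}\xrightarrow{x}\cG$, followed by $\cG\to\spec R$, agrees with $\spec k\to\mathcal{R}\to\spec R$ and hence factors through $\spec k\hookrightarrow\spec R$; so $x\circ p$ is a $k$-point of $\cG_k=\cG\times_{\spec R}\spec k$, and therefore $\cG_k$ is neutral, i.e.\ $\cG|_k=0$. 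Since the substantive content is entirely in \Cref{thm:main2}, there is no real obstacle here; the only points requiring a little care are the band bookkeeping — checking that a section of a $G$-gerbe witnesses the \emph{zero} class rather than just triviality of some inner form of $G$ — and the explicit description of the (reduced) closed fibre of an iterated root stack from which the $k$-point is extracted.
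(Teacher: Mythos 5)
Your proposal is correct and follows essentially the same route as the paper: the paper's proof likewise observes that $\cG|_K=0$ gives a $K$-point of $\cG$, applies Theorem~\ref{thm:main2} to extend it over a root stack $\cR_n$, and composes with a $k$-point $\spec k\to\cR_n$ over the closed point to neutralize $\cG|_k$. Your extra remarks on band bookkeeping and on the reduced closed fibre of the root stack are just more explicit versions of steps the paper leaves implicit.
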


As a special case, we can consider a homogeneous space $V$ for a group $H$ which yields a gerbe $[V/H]$ for the stabilizer group.

\begin{corollary}\label{cor:homogeneous_1} Let $H \to \spec R$ be a special affine group scheme. Let $V \to \spec  R$ be an affine homogeneous space for $H$ and suppose that the geometric stabilizer groups are reductive groups with tame Weyl group. If $V(K) \neq \emptyset$ then $V(k) \neq\emptyset$. 
\end{corollary}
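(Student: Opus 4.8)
The plan is to reduce the statement about the homogeneous space $V$ to the gerbe statement of \Cref{prop_gs_tame} by passing to the quotient stack. Since $H$ acts transitively on $V$ (in the appropriate sheafy sense) and $V$ is affine over $\spec R$, the quotient stack $\cG := [V/H]$ is a gerbe over $\spec R$ banded by (an inner form of) the stabilizer group scheme $G := \Stab_H(v)$, where $v$ is any geometric point of $V$. First I would check that $G \to \spec R$ is reductive with tame Weyl group --- this is exactly the hypothesis on the geometric stabilizers, together with the fact that reductivity and the order of the Weyl group can be checked on geometric fibers. Thus $\cG$ defines a class in $\oH^2(R, G)$ (after possibly twisting $G$ by the band, which does not affect reductivity or the Weyl group since these are insensitive to inner twisting).

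Next I would observe that a $K$-point of $V$ is the same data as a section $\spec K \to \cG$ over $\spec K$, i.e.\ a trivialization of $\cG|_K$; so the hypothesis $V(K) \neq \emptyset$ says precisely that $\cG|_K = 0$ in $\oH^2(K,G)$. Now apply \Cref{prop_gs_tame}: since $G$ is reductive with tame Weyl group, the class $\cG$ becomes trivial after passing to a root stack $\sqrt[n]{\spec R}$, and in particular $\cG|_k = 0$. Equivalently, $\cG$ admits a section over $\sqrt[n]{\spec R}$, which by \Cref{rem:representable} we may take to be representable; restricting to the closed point, $\cG \times_{\spec R} k$ has a section over $\sqrt[n]{\spec k} = \cB\bmu_{n,k}$ (or over $\spec k$ directly from $\cG|_k = 0$). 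A section of $\cG$ over $\spec k$ is the same as an $H_k$-orbit map from $H_k$ to $V_k$, which exhibits a $k$-point of $V$: indeed if $\cG|_k$ is the trivial gerbe then $[V_k/H_k] \simeq \cB G_k$, and pulling back the universal torsor along $\spec k \to \cB G_k$ gives an $H_k$-torsor inside $V_k$ together with an $H_k$-equivariant map to $V_k$; since $H$ is special this torsor is trivial, hence has a $k$-point, whose image in $V_k$ is the desired point. Therefore $V(k) \neq \emptyset$.

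The main subtlety I anticipate is keeping track of the band: \Cref{prop_gs_tame} is stated for a fixed reductive group $G/\spec R$, whereas $[V/H]$ is a priori only a gerbe banded by a sheaf of groups that is an inner form of $G$. One needs either that \Cref{prop_gs_tame} as proved in the paper already covers bands that are inner forms (which is natural, since the argument via \Cref{thm:main2} only uses the reductive structure up to inner twisting and the Weyl group order is an inner invariant), or one must first spread out the band to an actual group scheme over $\spec R$ after an étale cover and descend --- but since we only need a $k$-point in the end and $\spec R$ is local, it should suffice to work over the strict henselization, where every gerbe banded by a smooth group is neutral-up-to-twist and the band can be rigidified. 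A second routine point is that "essentially surjective / $\cG|_k = 0$ implies $V(k)\neq\emptyset$" uses specialness of $H$ to pass from an $H_k$-torsor to an honest $k$-point; this is exactly why the hypothesis on $H$ is there, and it is the same mechanism already used in \Cref{cor:LN_1}.
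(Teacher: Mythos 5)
Your proposal is correct and follows essentially the same route as the paper: view $[V/H]$ as a gerbe banded by the (reductive, tame-Weyl-group) stabilizer, use the $K$-point of $V$ to trivialize it over $K$, invoke \Cref{prop_gs_tame} to trivialize $[V/H]|_k$, and then use specialness of $H$ to lift a $k$-point of $[V/H]|_k$ to $V_k$ via the $H$-torsor $V\to [V/H]$. The band subtlety you flag is absorbed by the paper's conventions, since $\oH^2(R,G)$ there classifies gerbes whose band agrees with $\Band(G)$, which already accommodates inner forms.
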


\noindent As unirationality for homogeneous spaces is equivalent to admitting a rational point, we obtain the following. 

\begin{corollary}\label{cor:unirationality} Unirationality specializes along DVRs for homogeneous spaces for special reductive groups with reductive geometric stabilizer with tame Weyl group.    
\end{corollary}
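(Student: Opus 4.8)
The plan is to deduce the statement from \Cref{cor:homogeneous_1} together with the elementary fact that, for a homogeneous space under a connected reductive group over an \emph{infinite} field, unirationality is equivalent to the existence of a rational point. So suppose $H\to\spec R$ is a special reductive group scheme and $V\to\spec R$ is an affine homogeneous space for $H$ whose geometric stabilizers are reductive with tame Weyl group, and assume the generic fibre $V_K := V\times_R K$ is unirational over $K$; the goal is to show that the special fibre $V_k := V\times_R k$ is unirational over $k$.

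First I would record two preliminary observations. One: $K$ is infinite, since the powers of a uniformizer of the DVR $R$ are pairwise distinct; moreover $V_K$ is geometrically integral, being a single orbit of the connected group $H_K$, so ``unirational'' has its usual meaning. Two: whenever a homogeneous space $W$ under a group $G$ over a field $F$ has an $F$-point $w$, the orbit map $G\to W$, $g\mapsto g\cdot w$, is dominant, so if in addition $G$ is unirational over $F$ (for instance $G$ connected reductive) then precomposing with a dominant rational map $\bA^N_F\dashrightarrow G$ exhibits $W$ as unirational over $F$; conversely, if $F$ is infinite and $W$ is unirational over $F$, then a dominant rational map $\bA^N_F\dashrightarrow W$ together with the Zariski-density of $\bA^N_F(F)$ produces a point of $W(F)$.

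With these in hand the argument is short. Applying the converse implication over the infinite field $K$ gives $V(K) = V_K(K)\neq\emptyset$. Now I would invoke \Cref{cor:homogeneous_1}, whose hypotheses are exactly met ($H$ special affine, $V$ an affine homogeneous space for $H$, geometric stabilizers reductive with tame Weyl group), to conclude $V(k) = V_k(k)\neq\emptyset$. Finally, applying the forward implication over $k$ --- using that $H_k$ is connected reductive, hence unirational over $k$, and that $V_k$ is a homogeneous space under $H_k$ --- shows that $V_k$ is unirational over $k$, as desired.

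The one genuinely substantive ingredient is \Cref{cor:homogeneous_1} (and behind it \Cref{thm:main2}); the rest is formal, so I do not expect a real obstacle. The only points where I would be slightly careful are: the infiniteness of $K$, which is precisely what makes ``unirational $\Rightarrow$ rational point'' available over $K$; the classical fact that a connected reductive group over a field is unirational, needed for the converse over $k$; and the affineness of $V\to\spec R$ required to cite \Cref{cor:homogeneous_1}, which is part of the running hypotheses and in any event holds fibrewise by Matsushima's criterion since $H$ and the stabilizers are reductive.
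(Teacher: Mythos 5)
Your proposal is correct and follows essentially the same route as the paper: unirationality of $V_K$ gives a $K$-point (using that $K$, as the fraction field of a DVR, is infinite), \Cref{cor:homogeneous_1} then produces a $k$-point, and unirationality of the connected reductive group $H_k$ (the paper cites Borel) together with the surjective orbit map $h\mapsto hx$ gives unirationality of $V_k$. Your extra care about the infiniteness of $K$ is a point the paper leaves implicit, but it is not a substantive difference.
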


\subsubsection{Applications to fibrations}

Suppose $\cX$ is a stack with a proper good moduli space which parametrizes a class of projective varieties, for example $\overline{\mathcal{M}}_g$ or the KSBA moduli space of stable varieties, the $K$-moduli stack parametrizing $K$-semistable Fano varieties, or the moduli space of boundary polarized Calabi-Yau surface pairs \cite{ABBDILW, blum2024goodmodulispacesboundary}. Then the root stack valuative criterion yields certain nice integral models for fibrations and a method for classifying the singular fibers of fibrations whose general fiber is parametrized by the stack $\cX$ as follows. 

Let $Y_\eta \to \spec K$ be an object over of $\cX$ and let $\cY \to \cR_n$ be the extension to a family over the $n^{th}$ root stack given by Theorem \ref{thm:main}. Then taking the coarse moduli space yields an integral model $Y \to \spec R$ whose central fiber is a quotient of an object parametrized by $\cX$ by $\bmu_n$. This turns the question of classifying singular fibers to one of classifying $\bmu_n$ actions on the objects of $\cX$. See \cite{MR3961332}, \cite[Section 7]{bejleri2024heightmodulicyclotomicstacks} and \cite{javier_hyperelliptic} for examples involving stable curves. 

\begin{corollary}\label{cor:K_moduli}
 Let $R$ be a DVR containing $\mathbb{C}$. Suppose $Y_\eta \to \spec K$ is a $K$-semistable Fano variety. Then there exists a proper model $Y \to \spec R$ with klt singularities whose central fiber is the quotient of a $K$-polystable Fano variety by $\bmu_n$. In particular, the central fiber is irreducible and of multiplicity $n$. 
\end{corollary}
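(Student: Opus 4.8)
The plan is to feed the $K$-moduli stack into \Cref{thm:main}. Set $d=\dim Y_\eta$ and $v=(-K_{Y_\eta})^{d}$, and let $\cX=\cX^{\mathrm{Kss}}_{d,v}$ be the moduli stack of $K$-semistable Fano varieties of dimension $d$ and anticanonical volume $v$. By the theory of $K$-moduli (see e.g.\ \cite{properness_kmoduli_new} and the references therein), $\cX$ is an Artin stack of finite type over $\mathbb{C}$ with affine diagonal, and it admits a good moduli space $\cX\to X=X^{\mathrm{Kps}}_{d,v}$ that is proper (in fact projective) over $\mathbb{C}$. The variety $Y_\eta$ determines a morphism $\spec K\to\cX$. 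Replacing $Y_\eta$ by its $K$-polystable degeneration, which is the unique closed point of $\cX\times_X\spec K$ and has the same image in $X$, I may assume that $\spec K\to\cX$ factors through this closed point (for a strictly semistable $Y_\eta$ one then obtains a model whose generic fiber is the polystable degeneration). Since $X$ is proper and separated over $\mathbb{C}$ and $R$ contains $\mathbb{C}$, the composite $\spec K\to\cX\to X$ extends uniquely to $\spec R\to X$, so we are exactly in the situation of \Cref{thm:main} with $S=\spec\mathbb{C}$. Applying it, together with its ``moreover'' clause, yields a root stack $\sqrt[n]{\spec R}\to\spec R$ and a morphism $g\colon\sqrt[n]{\spec R}\to\cX$ extending $Y_\eta$ over $\spec K$ whose restriction to the closed substack $\cB\bmu_{n,\mathbb{C}}\subset\sqrt[n]{\spec R}$ lands on the closed point of $\cX\times_X\spec\mathbb{C}$, i.e.\ corresponds to a $K$-polystable Fano $W$ carrying a $\bmu_n$-action.

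Next I would turn $g$ into an honest flat projective $R$-scheme. Write $\sqrt[n]{\spec R}=[\spec R'/\bmu_n]$ with $R'=R[t]/(t^{n}-\pi)$; since $R$ contains $\mathbb{C}$ (hence $\mu_n$) and $\pi$ has valuation $1$, $R'$ is again a DVR, with uniformizer $t$, residue field $\mathbb{C}$, and fraction field $K'=K(\pi^{1/n})$ cyclic of degree $n$ over $K$, on which $\bmu_n$ acts through the identification $\bmu_n\cong\operatorname{Gal}(K'/K)$; moreover $(R')^{\bmu_n}=R$ and $[\spec K'/\bmu_n]=\spec K$. Pulling back the universal family $\cU\to\cX$ along $\spec R'\to\sqrt[n]{\spec R}\xrightarrow{g}\cX$ — which makes sense because $\cU\to\cX$ is representable by projective schemes — produces a flat projective family $\cY'\to\spec R'$ of klt Fano varieties, $\bmu_n$-equivariantly polarized by $-K_{\cY'/\spec R'}$ over the action on $R'$, with $\cY'\times_{R'}K'\cong Y_\eta\times_K K'$ and $\cY'\times_{R'}\mathbb{C}\cong W$. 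I then set $Y:=\cY'/\bmu_n$, which is projective over $\spec R'/\bmu_n=\spec R$.

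It remains to read off the asserted properties, which should be routine. By Galois descent $Y\times_R K=(Y_\eta\times_K K')/\operatorname{Gal}(K'/K)=Y_\eta$; the fiber over the closed point is $\bigl(\cY'\times_{R'}\spec(R'/t^{n})\bigr)/\bmu_n$, which, as $\pi$ equals a unit times $t^{n}$, is the quotient $W/\bmu_n$ of the $K$-polystable Fano $W$ thickened to multiplicity $n$, and hence is irreducible (as $W$ is) of multiplicity $n$. For singularities, $\cY'$ is klt, being the total space of a $\mathbb{Q}$-Gorenstein flat family of klt Fano varieties over the regular curve $\spec R'$ (inversion of adjunction along the central fiber, together with the klt-ness of the generic fiber), and a quotient of a klt variety by a finite group is klt, so $Y$ is klt; properness of $Y\to\spec R$ follows since $\cY'\to\spec R$ is proper and $\cY'\to Y$ is finite. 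The only genuinely non-formal inputs are \Cref{thm:main} and its ``moreover'' clause (used as a black box, and the source of the polystability of $W$); beyond that, the main thing to get right is the identification $\sqrt[n]{\spec R}=[\spec R'/\bmu_n]$ together with the Galois-descent computation of the generic fiber and the multiplicity computation of the central fiber.
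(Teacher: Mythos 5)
Your overall route is the same as the paper's: apply \Cref{thm:main} to the $K$-moduli stack (which has a proper good moduli space), pull the universal family back to the root stack $[\spec R'/\bmu_n]$, pass to the $\bmu_n$-quotient (the coarse space), get klt-ness of the total space from klt-ness of the fibers plus inversion of adjunction and the fact that finite quotients of klt are klt, and read off multiplicity $n$ from total ramification of the root stack. There is, however, one genuine defect at the very first step. To invoke the ``moreover'' clause of \Cref{thm:main} you replace $Y_\eta$ by its $K$-polystable degeneration. This changes the generic fiber: the corollary asks for a proper model of $Y_\eta$ itself, and for strictly semistable $Y_\eta$ your construction produces a model of a different variety, as you concede parenthetically. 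That does not prove the statement. The paper instead applies \Cref{thm:main} directly to the semistable point $\spec K \to \cX$; the extension produced by its proof (via \Cref{prop_knowing_the_result_for_poli_pts_suffices} together with the ``moreover'' part of \Cref{theorem_reduction_to_gerbe}) restricts to $Y_\eta$ over $\spec K$ \emph{and} still sends the closed point of the root stack to a polystable point. The fix is therefore to delete your reduction and use this stronger output of the proof of \Cref{thm:main} (which goes slightly beyond the literal ``moreover'' clause as stated, but is what the paper relies on).

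A minor slip: the residue field of $R$ (and of $R'=R[t]/(t^n-\pi)$) is some field $k\supseteq \mathbb{C}$, not $\mathbb{C}$ itself, so the closed substack is $\cB\bmu_{n,k}$ and the central fiber is $W/\bmu_n$ with $W$ a $K$-polystable Fano over $k$; this does not affect the structure of the argument. The remaining steps --- Galois descent of the generic fiber, properness via finiteness of $\cY'\to Y$, irreducibility and multiplicity of the central fiber --- are correct and match the paper's.
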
 

Example \ref{ex:conic_tame} can be seen as a special case of Corollary \ref{cor:K_moduli} for the $K$-semistable Fano variety $\mathbb{P}^1$. 

\begin{corollary}\label{cor:bpcy_moduli} Let $R$ be a DVR containing $\mathbb{C}$. Suppose $(Y_\eta, D_\eta) \to \spec K$ is a boundary polarized Calabi-Yau surface pair in the sense of \cite{ABBDILW, blum2024goodmodulispacesboundary}. Then there exists a proper model $(Y, D) \to \spec R$ with semi-log canonical singularities whose central fiber is the quotient of a boundary polarized Calabi-Yau by a $\bmu_n$ action. In particular, $(Y,D + (Y_0)_{red})$ is semi-log canonical and $Y_0$ has multiplicity $n$. 
\end{corollary}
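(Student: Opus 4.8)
The plan is to apply \Cref{thm:main} to the moduli stack of boundary polarized Calabi--Yau surface pairs and then descend along coarse spaces, following the recipe sketched before \Cref{cor:K_moduli}. Base changing along $\spec\C\to\spec R$ (possible since $\C\subseteq R$), let $\cX$ denote the moduli stack of boundary polarized Calabi--Yau surface pairs; by \cite{ABBDILW,blum2024goodmodulispacesboundary} it is a finite type Artin stack with affine diagonal over $S=\spec R$ admitting a good moduli space $\cX\to X$ with $X$ proper over $S$. The pair $(Y_\eta,D_\eta)$ is by definition a $K$-point $\spec K\to\cX$; composing with $\cX\to X$ and applying the valuative criterion of properness for $X/S$ extends it to $\spec R\to X$, and this produces the solid diagram of \Cref{thm:main}.

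Applying \Cref{thm:main} yields a root stack $\cR=\sqrt[n]{\spec R}$ together with a filling $\cR\to\cX$, which, after replacing it by its relative coarse moduli space as in \Cref{rem:representable}, we may assume is representable. Pulling back the universal pair gives a family $(\cY,\cD)\to\cR$ with generic fiber $(Y_\eta,D_\eta)$; let $(Y,D)\to\spec R$ be its coarse space, which is proper with generic fiber $(Y_\eta,D_\eta)$. Restricting $(\cY,\cD)$ along the closed substack $\cB\bmu_{n,k}\hookrightarrow\cR$ produces a boundary polarized Calabi--Yau surface pair $(W,\Delta_W)$ over $k$ equipped with a $\bmu_n$-action, which is faithful because $\cR\to\cX$ is representable; by construction the reduced central fiber of $(Y,D)$ with its restricted boundary is the quotient $(W,\Delta_W)/\bmu_n$. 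Since the uniformizer of $R$ pulls back on $\cR$ to the $n$-th power of an equation cutting out the reduced central substack, the scheme-theoretic fiber $Y_0$ has multiplicity $n$.

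For the remaining assertions, write $\cY_0\subseteq\cY$ for the reduced fiber of $\cY\to\cR$ over $\cB\bmu_{n,k}$, so $(\cY_0,\cD|_{\cY_0})=(W,\Delta_W)$ is slc with numerically trivial log canonical class. Since $(\cY,\cD)\to\cR$ is a stable family and $\cR$ is, étale-locally, a regular curve, the total space $(\cY,\cD+\cY_0)$ is slc, hence slc globally because the slc property is étale-local. As $\cY\to Y$ is a finite crepant morphism (we are in characteristic zero) identifying $K_\cY+\cD+\cY_0$ with the pullback of $K_Y+D+(Y_0)_{red}$, the slc property descends and $(Y,D+(Y_0)_{red})$ is slc.

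The step I expect to be the main obstacle is the bookkeeping around the coarse space map $\cY\to Y$: verifying that it is genuinely crepant for the relevant log canonical divisors so that the slc property descends, that the reduced central fiber of $Y$ is the honest quotient $W/\bmu_n$ rather than merely being dominated by it, and that the boundary polarized Calabi--Yau structure, polarization included, descends along the coarse space. One must also invoke the relative, non-normal form of the statement that a stable family over a regular curve has slc total space after adding the reduced special fiber, citing the appropriate references in the KSBA and slc literature.
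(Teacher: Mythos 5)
Your overall route is the same as the paper's: apply the root stack valuative criterion to the moduli stack, pull back the universal family, identify the central fiber of the coarse space as a $\bmu_n$-quotient, and get the slc statements from inversion of adjunction plus the fact that the slc property descends along the (crepant, quotient) coarse space map. The multiplicity computation via total ramification of $\cR_n \to \spec R$ is also as in the paper. Those parts are fine, and the "obstacles" you flag at the end (inversion of adjunction in the non-normal relative setting, descent along the coarse map) are exactly the points the paper disposes of by citing Kawakita's inversion of adjunction for log canonical singularities and the fact that quotients of slc singularities are slc.

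The genuine gap is your very first assertion: that the moduli stack $\cX$ of boundary polarized Calabi--Yau surface pairs is a finite type Artin stack with affine diagonal admitting a \emph{proper good moduli space}. By the cited references this is not known (and is precisely the delicate point of this corollary, as opposed to \Cref{cor:K_moduli}): $\cM^{CY}$ only admits an \emph{asymptotically good} moduli space in the sense of \cite[Definition 13.1]{ABBDILW}, so \Cref{thm:main} does not apply to it directly. The paper's proof repairs this by first bounding the Cartier index of $K_Y$ to cut out an open substack $\cM^{CY}_m$ which genuinely has a good moduli space $M^{CY}_m$, and then invoking the proofs of \cite[Theorem 1.3]{ABBDILW} and \cite[Theorem 1.1]{blum2024goodmodulispacesboundary} to see that this good moduli space is proper and independent of $m$ for $m \gg 0$; only then is the root stack valuative criterion applied, to $\cM^{CY}_m$ for $m$ sufficiently large. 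Without this reduction your appeal to \Cref{thm:main} has no target satisfying its hypotheses, so you should add this step (and check that the given $K$-point $(Y_\eta, D_\eta)$ lands in $\cM^{CY}_m$ for suitable $m$, which holds since $m$ can be taken to be any sufficiently divisible multiple of the Cartier index of $K_{Y_\eta}+D_\eta$).
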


Moreover, by \Cref{cor_global}, Corollaries \ref{cor:K_moduli} and \ref{cor:bpcy_moduli} have analogues for fibrations over global curves. 

\subsection{Conventions.} By reductive group scheme $G\to S$ we mean, as in \cite[Definition 3.1.1]{ConradReductive},  a smooth $S$-affine group scheme, such that the geometric fibers are connected reductive groups. When we don't specify, the cohomology will be the fppf cohomology.
We will pass between an algebraic stack over a base scheme $B$ to the corresponding category fibered in groupoids over $\cS ch(B)$ without mentioning it specifically; we expect that this will not cause any confusion. Whenever we mention the site of an algebraic stack $\cX$, it will be the flat-fppf site: its object are flat morphisms $B\to \cX$ from a scheme, and a covering of an object is an fppf-covering. All algebraic stacks will be noetherian, unless mentioned otherwise. All algebraic stacks in the body of the paper will have affine diagonal. The $n^{th}$ root stack of a DVR $R$, denoted $\sqrt[n]{\spec R}$ or $\cR_n$, is the algebraic stack $[\spec R[t^{1/n}]/\bmu_n]$ where $t$ is a uniformizer of $R$. 

\subsection{Acknowledgements} We thank Jarod Alper, Harold Blum, Patrick Brosnan, Elden Elmanto, Michael Groechenig, Tom Haines, Sam Molcho, Swarnava Mukhopadhyay, Danny Ofek, Zinovy Reichstein, Jason Starr for helpful conversations. We thank Andres Fernandez Herrero for helpful conversations and for giving us detailed feedback on a previous version of this manuscript. We especially thank Sid Mathur for many helpful conversations about gerbes. 

\section{Groups, torsors and gerbes on algebraic stacks}
In this section we develop some preliminary results on reductive groups and their torsors and gerbes over an algebraic stack. We also refer the reader to \cite{good_moduli_gerbe} where this material will be developed in more depth. 

\subsection{Reductive groups over a stack}\label{subsection_red_groups}
We begin with the following
\begin{definition}
    Let $\cX$ be an algebraic stack. A group $G$ over $\cX$ is a sheaf on $\cX_{\fppf}$ which is a sheaf in groups, and such that there is a fppf-cover $U\to \cX$ such that the sheaf $G|_U$ is represented by a scheme over $U$. We say that $G$ is a smooth (resp. separated, of finite type, fppf) group over $\cX$, if there is an fppf-cover as above such that $G|_U$ is represented by a smooth (resp. separated, of finite type, fppf) group scheme over $U$.
\end{definition}

\begin{definition}
    Let $G\to \cX$ be a smooth separated group over an algebraic stack $\cX$. We say that $G$ is reductive (resp. linearly reductive, a torus) if for every fppf-cover $B\to \cX$ from a scheme $B$, the pull-back $G\times_\cX B\to B$ is a reductive (resp. linearly reductive, a torus) $B$-group. 
\end{definition}
\begin{lemma}\label{lemma_moduli_of_max_tori}
    Let $G\to \cX$ a reductive group over a tame algebraic stack as above, with connected fibers. Then there is an algebraic stack $\cT$, with a smooth affine and representable morphism $\cT\to \cX$ such that for every morphism $f:B\to \cX$, there is an equivalence 
    \[
\operatorname{Hom}_\cX(B,\cT) \longleftrightarrow \{\text{maximal tori of }G\times_\cX B\}.
    \]
\end{lemma}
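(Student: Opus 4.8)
The plan is to construct $\cT$ as the scheme of maximal tori, using the well-known fact that for a reductive group scheme $G \to B$ over an (ordinary) scheme, the functor of maximal tori is represented by a smooth affine $B$-scheme. Concretely, the classical result (see \cite[Exp.~XII, XXII]{SGA3} or \cite[Theorem 3.2.6]{ConradReductive}) says that the sheaf $\underline{\mathrm{Tor}}(G/B)$ assigning to $B' \to B$ the set of maximal tori of $G_{B'}$ is represented by a smooth affine $B$-scheme $\mathrm{Tor}(G/B)$, and moreover $\mathrm{Tor}(G/B) = G/N_G(T)$ for any maximal torus $T$ after passing to a cover that trivializes one. First I would choose an fppf cover $U \to \cX$ by a scheme such that $G|_U$ is a reductive group scheme over $U$ in the usual sense, set $\cT_U := \mathrm{Tor}(G|_U/U)$, and then show that this construction descends along $U \to \cX$. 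Descent is immediate from the functorial characterization: the functor $B \mapsto \{\text{maximal tori of } G\times_\cX B\}$ is an fppf sheaf on $\cX_{\fppf}$ (being a maximal torus is fppf-local on the base by definition of reductive group over $\cX$, and maximal tori glue since they are closed subgroup schemes), and its restriction to $U_{\fppf}$ is represented by the algebraic space (scheme) $\cT_U$; hence by fppf descent for algebraic stacks the sheaf on $\cX_{\fppf}$ is represented by an algebraic stack $\cT$ with $\cT \times_\cX U \cong \cT_U$. Since $\cT_U \to U$ is smooth, affine, and representable, and these properties are fppf-local on the target, $\cT \to \cX$ is smooth, affine, and representable.

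The one place where the hypotheses of the lemma — rather than just "algebraic stack" — enter is the word \emph{tame}. Over a general base the representability result for $\mathrm{Tor}(G/B)$ is classical and needs nothing; so a priori the tameness assumption looks superfluous for the bare statement as written. I suspect it is included either (i) because $\cT$ is intended to be used later (e.g. in \Cref{lemma_kempf_for_lin_reductive} or the reduction to gerbes) in a way that requires $\cX$ tame, or (ii) because the authors want the further structural fact that $\cT \to \cX$ admits sections fppf-locally in a controlled way, or that $\cT$ itself is tame, which would follow from $\cX$ tame since $\cT \to \cX$ is representable. For the proof of the stated equivalence, though, I would not need tameness, and I would flag this: the essential content is purely the SGA3 representability of the torus-scheme plus fppf descent.

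The main obstacle, such as it is, is bookkeeping rather than mathematics: one must check that the formation of $\mathrm{Tor}(G/B)$ is compatible with base change $B' \to B$ (it is, since it represents a functor), so that the descent datum on $\cT_U$ induced by the two projections $U \times_\cX U \rightrightarrows U$ is canonically the one coming from the functor of maximal tori, and hence satisfies the cocycle condition automatically. Then $\cT := [\cT_U / (U\times_\cX U)]$ in the sense of descent is an algebraic stack (in fact has a schematic, even affine, morphism to $\cX$), and unwinding the descent shows $\Hom_\cX(B, \cT)$ is the set of maximal tori of $G \times_\cX B$ for every $B \to \cX$, first for $B \to \cX$ factoring through $U$ and then in general by gluing over an fppf cover of $B$ pulled back from $U$. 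I would close by remarking that when a global maximal torus $T \subset G$ exists over $\cX$, one has $\cT \cong G/N_G(T)$, which makes the smooth affine representable claims transparent.
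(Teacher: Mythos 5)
Your proposal is correct and follows essentially the same route as the paper: the functor $B\mapsto\{\text{maximal tori of }G\times_\cX B\}$ is an fppf sheaf because its restriction to schemes is represented by the smooth affine scheme of maximal tori of \cite[Theorem 3.2.6]{ConradReductive}, and representability, affineness and smoothness of $\cT\to\cX$ are then checked fppf-locally via \cite[XII, 5.4]{SGA3}. Your observation that tameness of $\cX$ plays no role in this particular argument is consistent with the paper, whose proof likewise never invokes it.
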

\begin{proof}
    Note that $B\mapsto \{\text{maximal tori of }G\times_\cX B\}$ is an fppf sheaf. Indeed, the restriction of this functor to $B_{\fppf}$ is represented by the scheme $\Tor_{G_B/B}\to B$, as proven in \cite[Theorem 3.2.6]{ConradReductive}. Thus, $\cT$ is a sheaf over $\cX$. Then to prove representability, it is enough to show $\cT\to\cX$ is affine. For this, we may look fppf locally, where we can assume $\cX$ is a scheme; we then apply \cite[XII, 5.4]{SGA3}.
\end{proof}

\begin{lemma}\label{lemma_trivializing_the_torus_is_etale}
    Let $\cX$ be an algebraic stack, and 
    $T\to \cX$ a torus over $\cX$ which fppf-locally
    is isomorphic to
    $\Gm^n$. Then the sheaf
    $\underline{\Isom}_{\operatorname{grp}}(\Gm^n\times \cX,T)$ over $\cX_{\fppf}$, that sends $U\to \Isom_{\operatorname{grp}}(\Gm^n(U),T(U))$, is represented by an algebraic stack, which is \'etale over $\cX$ but not noetherian for $n>1$.
\end{lemma}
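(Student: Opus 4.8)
The claim has three parts: (1) the Isom-sheaf is representable by an algebraic stack, (2) it is étale over $\cX$, and (3) it is not Noetherian when $n > 1$. The plan is to reduce everything to the case where $\cX$ is a scheme by working fppf-locally, handle the classical case, and then descend.

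First I would verify that $\cF := \underline{\Isom}_{\operatorname{grp}}(\Gm^n\times \cX, T)$ is an fppf sheaf on $\cX_{\fppf}$; this is routine since isomorphisms of group schemes satisfy fppf descent. For representability and étaleness, since both properties can be checked fppf-locally on $\cX$, I may assume $T \cong \Gm^n \times \cX$ and, after a further reduction, that $\cX = \Spec A$ is an affine scheme. Then $\cF$ becomes $\underline{\Aut}_{\operatorname{grp}}(\Gm^n)$ over $\Spec A$. The key input is that for any ring (or base scheme), $\underline{\Aut}_{\operatorname{grp}}(\Gm^n)$ is represented by the constant group scheme $\GL_n(\Z)$: a homomorphism $\Gm^n \to \Gm^n$ is given by an integer matrix via its action on character lattices, since $\Hom_{\operatorname{grp}}(\Gm,\Gm) = \Z$ Zariski-locally and hence globally over a connected base, and invertibility forces the matrix to lie in $\GL_n(\Z)$. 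Over a general (possibly disconnected) base, $\underline{\Aut}_{\operatorname{grp}}(\Gm^n)$ is the locally constant sheaf with stalks $\GL_n(\Z)$, which is represented by a scheme étale (indeed a disjoint union of open-closed copies of the base, locally) over $\Spec A$. Descending back along the fppf cover $U \to \cX$, the sheaf $\cF$ becomes a form of this, hence is representable by an algebraic space étale over $\cX$; since $\cX$ is an algebraic stack, this is an algebraic stack étale over $\cX$. Actually it is a twisted form of $\GL_n(\Z)_\cX$, so representable by an algebraic stack admitting an étale representable cover by a scheme, which suffices.

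For the non-Noetherian claim when $n > 1$: étale over $\cX$ does not imply finite type, and the point is that $\GL_n(\Z)$ is an infinite discrete group for $n \geq 2$ (e.g. $\begin{pmatrix}1 & k \\ 0 & 1\end{pmatrix}$ for all $k \in \Z$). So even when $T$ is split, $\cF \cong \GL_n(\Z) \times \cX$ is a disjoint union of infinitely many copies of $\cX$, which is not quasi-compact and hence not Noetherian (assuming $\cX$ is nonempty). For $n = 1$ by contrast $\GL_1(\Z) = \{\pm 1\}$ is finite so the sheaf is finite étale over $\cX$. To make this rigorous in the possibly-twisted case one observes that being Noetherian is fppf-local on the target in an appropriate sense, or more simply that the étale cover by a scheme has infinitely many connected components lying over any point of $\cX$.

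The main obstacle I anticipate is purely bookkeeping: carefully justifying the identification $\underline{\Aut}_{\operatorname{grp}}(\Gm^n) \cong \GL_n(\Z)$ over a general base — in particular that there are no "extra" homomorphisms $\Gm \to \Gm$ beyond the integer powers, which uses that $\cO(\Gm)^\times = k^\times \times t^\Z$ for the relevant rings and that a Hopf-algebra map must send the grouplike $t$ to a grouplike, i.e. to $t^k$. Everything else (the fppf-sheaf property, étale descent of representability, the counting argument for non-Noetherianity) is standard.
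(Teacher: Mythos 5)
Your proof is correct and follows essentially the same route as the paper: the paper also observes that $\underline{\Isom}_{\operatorname{grp}}(\Gm^n\times \cX,T)$ is a torsor under $\Aut(\bG_{m,\cX}^n)=\GL_{n,\cX}(\bZ)$, deduces \'etaleness from \'etaleness of this constant group, and gets non-Noetherianity from $\GL_n(\Z)$ being infinite for $n>1$. You simply supply more detail (the character-lattice/grouplike identification of $\Aut(\Gm^n)$ and the descent bookkeeping) than the paper's two-line argument.
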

\begin{proof}
    We see that $\underline{\Isom}_{\operatorname{grp}}(\Gm^n\times \cX,T)$ is a $\GL_{n,\cX}(\bZ)=\operatorname{Aut}(\bG_{m,\cX}^n)$-torsor. Since $\GL_{n,\cX}(\bZ)$ is \'etale over $\cX$, it follows that $\underline{\Isom}_{\operatorname{grp}}(\Gm^n\times \cX,T)$ is \'etale over $\cX$. Note it is not noetherian as $\GL_{n,\cX}(\bZ)$ is not noetherian.
\end{proof}

The following lemma is a generalization of \cite[Theorem 3$''$]{serre2010bounds}; see also \cite[Lemma 2.7]{gille2024loop}. 

\begin{lemma}\label{lemma_there_is_a_bmu_point}
Let $G$ be a reductive group over a field $k$ and suppose $\bmu_r$ acts on $G$. Then there exists a $\bmu_r$-invariant maximal torus of $G$ defined over $k$. 
\end{lemma}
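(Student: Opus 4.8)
The statement asserts that if $\bmu_r$ acts on a reductive group $G$ over a field $k$, then $G$ has a $\bmu_r$-invariant maximal torus defined over $k$. The natural strategy is to use the scheme of maximal tori $\Tor_{G/k} \to \Spec k$ from \cite[Theorem 3.2.6]{ConradReductive} (equivalently the stack $\cT$ of \Cref{lemma_moduli_of_max_tori}, here over $\cX = \Spec k$). This is a smooth affine $k$-scheme, and it is in fact a homogeneous space: $G$ acts transitively on $\Tor_{G/k}$ by conjugation (since any two maximal tori of $G_{\bar k}$ are conjugate), with stabilizers the normalizers $N_G(T)$, which are smooth. The $\bmu_r$-action on $G$ induces a $\bmu_r$-action on $\Tor_{G/k}$ compatible with conjugation, so finding a $\bmu_r$-invariant maximal torus over $k$ amounts to finding a $\bmu_r$-fixed $k$-point of $\Tor_{G/k}$, i.e.\ a $k$-point of the fixed-point scheme $(\Tor_{G/k})^{\bmu_r}$.

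First I would form the semidirect product $\widetilde G = G \rtimes \bmu_r$ and observe that $\Tor_{G/k}$ carries a $\widetilde G$-action extending the $G$-conjugation action (the $\bmu_r$ factor acting via the given action on $G$), and that this action is still transitive with stabilizers an extension of $\bmu_r$ by $N_G(T)$; in particular the stabilizers are smooth when $r$ is invertible in $k$, but in general one must be careful. The key input is then a fixed-point theorem: a smooth affine group scheme that is an extension of a finite group of multiplicative type (here $\bmu_r$) acting on a smooth affine scheme with ``enough'' fixed points has a fixed $k$-point. Concretely, I would argue as follows. Over $\bar k$, pick a $\bmu_r$-invariant maximal torus — this exists classically (e.g.\ \cite[Theorem 3$''$]{serre2010bounds} cited in the statement, or via averaging/the Bruhat–Tits fixed point theorem over an algebraically closed field, or by noting $(G_{\bar k})^{\bmu_r,\circ}$ is reductive and taking a maximal torus of it and its centralizer). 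So $(\Tor_{G/k})^{\bmu_r}$ is a nonempty $k$-scheme. It remains to produce a $k$-rational point.

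The heart of the matter is thus to show $(\Tor_{G/k})^{\bmu_r}$ has a $k$-point, and the cleanest route is to identify a connected component of it as a homogeneous space under a connected group and invoke that homogeneous spaces under connected groups over a field, when the scheme is nonempty with a smooth point, are ``rational enough'' — but that is exactly the kind of statement that can fail over non-closed fields. The safer approach, and I suspect the one the authors take: let $H = (G^{\bmu_r})^{\circ}$ be the reduced connected component of the identity of the fixed-point subgroup scheme (which is smooth and, by a theorem going back to Steinberg in the coprime case and more recently understood in general, is reductive when $r$ is coprime to the characteristic, and in any case is smooth and affine); then $H$ acts on $(\Tor_{G/k})^{\bmu_r}$, and one checks this action is transitive on $\bar k$-points with stabilizer $N_H(T \cap H)$-type groups, exhibiting the whole fixed locus (or a component) as $H/N$. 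Since $\Tor_{G/k}$ is affine, so is its fixed locus, and a torsor under a smooth affine group that becomes trivial over $\bar k$ and is affine — wait, it is the base not the total space that we want a point of. The genuinely useful fact is: \emph{if a smooth connected affine group $H$ over $k$ acts on a $k$-scheme $Y$ that is nonempty, then after we know $Y_{\bar k}$ is a single orbit, $Y$ is a homogeneous space and $Y(k) \ne \emptyset$ iff the corresponding class in $\oH^1(k, N)$ is trivial — which need not hold}. So this cannot be the whole story, and the resolution must be that $\Tor_{G/k}$ already has a $k$-point, namely \emph{any} maximal torus of $H$ extended to a maximal torus of $G$ via its centralizer. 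That is the actual argument: take a maximal torus $S$ of $H = (G^{\bmu_r})^\circ$ — which exists over $k$ because $H$ is split-solvable-up-to... no — because every smooth connected affine group over any field contains a maximal torus defined over $k$ \cite[Corollaire 7.3, Exp. XIV]{SGA3}. Then $T := Z_G(S)$; one shows $T$ is a maximal torus of $G$ (since $S$ is a maximal torus of the reductive $H$, hence contains the connected center of $H$, hence is ``big enough''), $T$ is defined over $k$ and $\bmu_r$-invariant because $S$ is.

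So my plan, in order: (1) reduce to showing $(G^{\bmu_r})^\circ$ has a maximal torus $S$ defined over $k$, which is standard; (2) set $T = Z_G(S)$ and verify $T$ is a torus over $k$ — this uses that $Z_G(S)$ is reductive (centralizer of a torus in a reductive group) and that its maximality follows from $S$ being a maximal torus of $(G^{\bmu_r})^\circ$ together with a rank computation over $\bar k$, where one invokes the classical result that a maximal $\bmu_r$-invariant torus of $G_{\bar k}$ is the centralizer of a maximal torus of $((G_{\bar k})^{\bmu_r})^\circ$; (3) verify $T$ is $\bmu_r$-invariant, which is immediate since $S$ is fixed pointwise-as-a-subgroup by $\bmu_r$ and centralizers are functorial. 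The main obstacle is step (2): one must know that $((G_{\bar k})^{\bmu_r})^\circ$ is reductive (so that ``maximal torus'' behaves well and centralizers of its maximal tori are maximal tori of $G$), and more delicately one must handle the case where $r$ is \emph{not} coprime to $\operatorname{char} k$, where $G^{\bmu_r}$ can be non-smooth and non-reductive; in that regime one should replace $G^{\bmu_r}$ by its smooth reduced connected subgroup and cite the structural results on fixed points of multiplicative-type group actions (Prasad–Yu, or \cite[Lemma 2.7]{gille2024loop} referenced in the excerpt) to recover reductivity. Modulo these inputs the argument is short and essentially formal.
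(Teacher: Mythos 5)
Your final plan (steps (1)--(3)) takes a genuinely different route from the paper's, but it has a gap exactly at the point you yourself flag as ``the main obstacle,'' and that obstacle is the entire content of the lemma outside the tame case. (The first half of your write-up, via the scheme $\Tor_{G/k}$ of maximal tori and homogeneous-space/$\oH^1$ considerations, is a dead end that you correctly abandon, so I set it aside.) When $p=\operatorname{char} k$ divides $r$, the two facts you need in step (2) --- that $(G^{\bmu_r})^{\circ}$ is reductive of the right rank, and that the centralizer in $G$ of one of its maximal tori is again a maximal torus of $G$ --- are not covered by the classical Steinberg-type results, which assume a semisimple automorphism of order prime to $p$; the subgroup scheme $G^{\bmu_r}$ is still smooth because $\bmu_r$ is linearly reductive, but the structure theory you need beyond smoothness is precisely what must be proven. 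Deferring it to unspecified ``structural results on fixed points of multiplicative-type group actions'' is not enough, and the reference you gesture at, \cite[Lemma 2.7]{gille2024loop}, is cited by the authors as a variant of the very lemma being proven, so leaning on it here risks circularity.

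The paper's proof supplies a concrete mechanism for the wild part that your proposal lacks. Write $r=p^a m$ with $(m,p)=1$, so $\bmu_r\cong\bmu_{p^a}\times\bmu_m$. Since $\Out(G)$ is \'etale and $\bmu_{p^a}$ is connected, the $\bmu_{p^a}$-action is inner, classified by a homomorphism $\bmu_{p^a}\to G^{\mathrm{ad}}$; by \cite[Lemma 7.1]{martens2015} any homomorphism from a multiplicative-type group scheme to a smooth affine group factors through a torus, which one takes maximal and lifts to $G$, immediately producing a $\bmu_{p^a}$-invariant maximal torus. The tame factor $\bmu_m$ is handled by \cite[Proposition 3]{serre2010bounds} (after base change to $\bar k$, where $\bmu_m\cong\bZ/m\bZ$), giving that $(G^{\bmu_m})^{0}$ is reductive; the two cases are then spliced together by induction on $\dim G$, passing to the centralizer of a $\bmu_{p^a}$-invariant torus of $(G^{\bmu_m})^{0}$. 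If you want to salvage your one-shot approach, you must either prove reductivity of $(G^{\bmu_r})^{\circ}$ and the ``centralizer of its maximal torus is a maximal torus of $G$'' statement for arbitrary $\bmu_r$ from scratch --- which will force you back into the same \'etale/infinitesimal dichotomy --- or locate a reference establishing these for general multiplicative-type actions that does not presuppose the existence of an invariant maximal torus.
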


\begin{proof} If $G$ is a torus there is nothing to prove, so we will assume $G$ is not a torus. Suppose $k$ has characteristic exponent $p \geq 1$ and write $r = p^am$ where $m$ is coprime to $p$. Following the proof of \cite[Theorem $3''$]{serre2010bounds}, we induct on $m$ and the dimension of $G$. Whem $m = 0$, $\bmu_r = \bmu_{p^a}$ is a connected group scheme. Let $\varphi : \bmu_r \to \Aut(G)$ be the morphism inducing the action. Since $\Out(G)$ is \'etale over $k$ \cite[Theorem 7.1.9(1)]{ConradReductive}, the composition $\bmu_r \to \Out(G)$ is trivial so $\varphi$ must factor through the adjoint group $G^{ad} = G/Z(G)$. By \cite[Lemma 7.1]{martens2015}, the morphism $\varphi : \bmu_r \to G^{ad}$ factors through a torus which we may take to be maximal by Grothendieck's theorem on maximal tori (see \cite[Remark A.1.2]{ConradReductive}). Lifting this maximal torus of $G^{ad}$ to $G$, we obtain a maximal torus of $G$ which is $\bmu_r$-invariant as required. If $m > 0$, then $\bmu_r \cong \bmu_{p^a} \times \bmu_m$ where $\bmu_m$ is etale of order coprime to the characteristic.
Then the invariant subgroup of the $\bmu_m$ action $G^{\bmu_m}$ satisfies the conclusions of \cite[Proposition 3]{serre2010bounds}. Indeed as in \emph{loc. cit.}, the conclusions can be checked after passing to $\bar{k}$ in which case $\bmu_m \cong \bZ/m\bZ$ where \emph{loc. cit.} applies directly. In particular, $G_1 = (G^{\bmu_m})^0$ is reductive and it is positive dimensional if $G$ is not a torus.
Now we proceed exactly as in the end of the proof of \cite[Theorem $3''$]{serre2010bounds} with $A$ replaced by $\bmu_r$ and $A' = <s>$ by $\bmu_m$\textcolor{blue}, we report the salient steps below. We consider the $\bmu_r/\bmu_m = \bmu_{p^a}$ action on $G_1$ and take a $\bmu_{p^a}$-invariant torus in $G_1$, which exists by the base case $m = 0$ above, and we take $G_2$ its centralizer. Observe that $\bmu_r$ acts on $G_2$. As $\dim(G_2)<\dim(G)$, by induction $G_2$ admits a $\bmu_r$-maximal torus. This will have the same dimension as a maximal torus for $G$; thus we have found a maximal torus of $G$ (contained in $G_2$) which is fixed by $\bmu_r$. 
\end{proof} 

\begin{remark}The proof shows more generally that the conclusion of \cite[Theorem $3''$]{serre2010bounds} holds whenever $A$ is a linearly reductive finite group scheme with a composition series $1 = A_0 \subset A_1 \ldots \subset A_l = A$ such that $A_i$ is normal in $A$ and $A_{i+1}/A_i$ is a $k$-form of either $\bmu_r$ or $\bZ/r\bZ$ for some $r$. \end{remark}

\begin{proposition}\label{prop_there_is_a_max_torus}
    Let $R$ be a henselian DVR, and let $\cR$ be the $r^{th}$ root stack of $\spec R$ at the closed point. Let $G\to \cR$ be a reductive group with connected fibers. Then there is a torus $T\to \cR$ with a morphism $i:T\subseteq G$ such that, for every geometric point $\spec l \to \cR$, the fiber $T_{l}\to G_{l}$ is a maximal torus.
\end{proposition}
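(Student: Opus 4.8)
The plan is to descend the problem along the canonical finite flat atlas $\Spec R'\to\cR$, where it becomes a question about $\bmu_r$-equivariant maximal tori over a henselian DVR, and then to solve that by combining the moduli scheme of maximal tori with \Cref{lemma_there_is_a_bmu_point} and the fact that a smooth morphism to a henselian local ring with a point over the closed point has a section.

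First I would set up the atlas. Let $\pi\in R$ be a uniformizer and $R':=R[\pi^{1/r}]=R[x]/(x^r-\pi)$. Then $R'$ is free of rank $r$ over $R$, is a domain (Eisenstein at $\pi$), and is itself a henselian DVR with uniformizer $x$ and residue field $k$, being finite over the henselian local ring $R$. By the conventions of the paper $\cR=[\Spec R'/\bmu_r]$ with $\bmu_r$ acting by $x\mapsto\zeta x$, and $p\colon\Spec R'\to\cR$ is a $\bmu_r$-torsor; hence $G':=p^{*}G$ is a reductive $R'$-group scheme with connected fibers carrying a $\bmu_r$-action covering the one on $\Spec R'$. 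Since torus-ness and fiberwise maximality of a closed subgroup can be checked fppf-locally on the base, and since every geometric point of $\cR$ factors through $\Spec R'$ (the associated $\bmu_r$-torsor over an algebraically closed field being trivial), it suffices to produce a $\bmu_r$-equivariant closed subgroup $T'\subseteq G'$ over $\Spec R'$ which is a maximal torus on every fiber, and then descend it along $p$.

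To produce $T'$, introduce the scheme of maximal tori $Z:=\Tor_{G'/R'}\to\Spec R'$, which by \cite[Theorem 3.2.6]{ConradReductive} is smooth of finite presentation and represents the functor of maximal tori; by functoriality the $\bmu_r$-action on $(G',\Spec R')$ induces one on $Z$ over $\Spec R'$, and a $\bmu_r$-equivariant maximal torus of $G'$ is exactly an element of $Z^{\bmu_r}(R')$. Now $\bmu_r$ is diagonalizable, hence linearly reductive; formation of its fixed locus commutes with arbitrary base change (the fixed locus being cut out by the non-zero-degree pieces of the induced grading), so $Z^{\bmu_r}\otimes_{R'}k=(\Tor_{G'_k/k})^{\bmu_r}$, and $Z^{\bmu_r}\to\Spec R'$ is smooth because the fixed locus of a linearly reductive group scheme acting on a smooth scheme is smooth over the base (see e.g.\ \cite{SGA3} or \cite{AHRetalelocal}). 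Applying \Cref{lemma_there_is_a_bmu_point} to the connected reductive $k$-group $G'_k:=G'\otimes_{R'}k$ with its $\bmu_r$-action produces a $\bmu_r$-invariant maximal torus of $G'_k$ defined over $k$, i.e.\ a $k$-point of $Z^{\bmu_r}$ over the closed point of $\Spec R'$. As $R'$ is henselian local and $Z^{\bmu_r}\to\Spec R'$ is smooth of finite type, this point lifts to a section $s\in Z^{\bmu_r}(R')$, which unwinds to the desired $\bmu_r$-equivariant maximal torus $T'\subseteq G'$; descending along $p$ gives $i\colon T\subseteq G$ over $\cR$ with the required fiberwise property. (Equivalently, this last paragraph can be phrased as extending a section of the smooth representable morphism $\cT\to\cR$ of \Cref{lemma_moduli_of_max_tori} from the residue gerbe $\cB\bmu_{r,k}$ to all of $\cR$.)

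The substantive points, and where I would expect to spend the most care, are: (i) getting a maximal torus \emph{equivariantly} over the residue field — this is precisely the content of \Cref{lemma_there_is_a_bmu_point}, and it is what handles imperfect residue fields, where one cannot simply pass to the algebraic closure; and (ii) the smoothness of $Z^{\bmu_r}$ over $R'$ together with the base-change identity $Z^{\bmu_r}\otimes_{R'}k=(\Tor_{G'_k/k})^{\bmu_r}$, which is what makes the henselian lifting from the closed fiber to all of $\Spec R'$ legitimate. Everything else — the DVR computation for $R'$, descent along the $\bmu_r$-torsor $\Spec R'\to\cR$, and the structure theory of $\Tor_{G/S}$ — is routine.
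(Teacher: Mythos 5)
Your setup (pass to the $\bmu_r$-torsor $\Spec R'\to\cR$ with $R'=R[x]/(x^r-\pi)$, reduce to a $\bmu_r$-equivariant maximal torus of $G'$, and use \Cref{lemma_there_is_a_bmu_point} to get an invariant maximal torus of $G'_k=G_k$ over the residue field) matches the paper's strategy. But the lifting step is flawed. The $\bmu_r$-action on $Z=\Tor_{G'/R'}$ is \emph{not} an action over $\Spec R'$: it covers the nontrivial action $x\mapsto \zeta x$ on $\Spec R'$, whose scheme-theoretic fixed locus is $\Spec R'/(x)=\Spec k$. Consequently the fixed locus $Z^{\bmu_r}$ is a scheme over $\Spec k$, not over $\Spec R'$; there is no smooth morphism $Z^{\bmu_r}\to\Spec R'$ to which henselian lifting of $k$-points could be applied, and an ``element of $Z^{\bmu_r}(R')$'' is not the same thing as a $\bmu_r$-equivariant section of $Z\to\Spec R'$ (the latter is a section of $[Z/\bmu_r]\to[\Spec R'/\bmu_r]=\cR$, i.e.\ of the stack $\cT\to\cR$ of \Cref{lemma_moduli_of_max_tori}). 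So the mechanism you invoke for step (ii) does not exist as stated, and attempts to repair it (e.g.\ via the Weil restriction $\mathrm{Res}_{R'/R}Z$ with its genuine $\bmu_r$-action over $R$) run into the same issue in a different guise: its special fiber is a restriction along $k[x]/(x^r)$, so one needs an \emph{equivariant} torus over the non-reduced fiber, which is already an equivariant deformation problem.

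What is genuinely needed, and what the paper supplies, is a lifting statement for sections of the smooth affine morphism $\cT\to\cR$ along the closed substack $\cR_0=\cB\bmu_{r,k}$ (a gerbe, not a point): the pair $(\cR,\cR_0)$ is henselian by \cite[Theorem 3.6]{AHRetalelocal}, $\cR$ has the resolution property, and then \cite[Proposition 7.9]{AHRetalelocal} gives essential surjectivity of $\Gamma(\cT/\cR)\to\Gamma(\cT_0/\cR_0)$. Equivalently, one can argue by $\bmu_r$-equivariant deformation theory over $R'$ (linear reductivity of $\bmu_r$ lets you choose invariant infinitesimal lifts since the relevant obstruction/torsor groups have vanishing higher invariants) followed by algebraization, which is essentially what the cited results package. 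Your parenthetical last sentence gestures at exactly this reformulation, but treats it as equivalent to the incorrect fixed-locus argument and supplies no tool for the actual stacky/equivariant lifting; ordinary smoothness plus henselianity of $R'$ only lifts the $k$-point, with no control on equivariance. The first half of your proof (the reduction and the use of \Cref{lemma_there_is_a_bmu_point}) is correct; the proof is incomplete without replacing the fixed-locus lifting by one of the arguments above.
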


\begin{proof} For any map of algebraic stacks $f : \cX' \to \cX$ let $\Gamma(\cX'/\cX)$ denote the groupoid of sections of $f$. Let $\cT \to \cR$ be the stack of maximal tori of $G$ which is smooth and affine over $\cR$ by Lemma \ref{lemma_moduli_of_max_tori}. We wish to show that $\Gamma(\cT/\cR)$ is nonempty. Let $\cR_0 = B\bmu_r$ be the residual gerbe of the closed point of $\Spec R$ and let $\cT_0 = \cT \times_{\cR} \cR_0, G_0 = G \times_{\cR} \cR_0$. The data of $G_0$ is equivalent to the data of a reductive group $G_{k}$ over $k$, the residue field of $R$, together with a $\bmu_r$-action on it. By Lemma \ref{lemma_there_is_a_bmu_point}, there exists a $\bmu_r$-invariant maximal torus of $G_k$, i.e. the groupoid of sections $\Gamma(\cT_0/\cR_0)$ is nonempty. Since $R$ is henselian, the pair $(\cR, \cR_0)$ is henselian by \cite[Theorem 3.6]{AHRetalelocal}, and $\cR$ has the resolution property as it is a global quotient of an affine scheme by $\bG_m$. Thus by \cite[Proposition 7.9]{AHRetalelocal}, $\Gamma(\cT/\cR) \to \Gamma(\cT_0/\cR_0)$ is essentially surjective, and in particular nonempty. 
\end{proof} 

\subsection{Gerbes and torsors over stacks}

We recall some background on torsors, gerbes and bands. Let $\cX$ be an algebraic stack and $G/\cX$ be a group stack as in the previous section. 

\begin{definition}
    A $G$-\emph{torsor} over $\cX$ is an fppf sheaf $P \to \cX$ with an action $G \times_{\cX} P \to P$ for which there exists an fppf cover $U \to \cX$ such that $P_U \cong G_U$ with the left action of $G_U$. A \emph{gerbe} over $\cX$ is an fppf stack $\cY \to \cX$ for which there exists an fppf cover $U \to \cX$ and a group $G_U \to U$ such that $\cY_U \cong \cB G_U$ where $\cB G_U$ is the classifying stack of $G_U \to U$.  
\end{definition}

We refer to \cite[Chapter IV]{giraud} for background on bands which we recall here; see also \cite[Section 3.1]{edidin2001brauer}. Let $\cG \to \cX$ be a gerbe which is trivialized after an fppf cover $U \to \cX$ and suppose that the group $G_U$ is pulled back from a group stack $G \to \cX$. Over $U \times_{\cX} U$ we have descent data for $BG_U$ which is a section of the stack $\Isom_U(\cB G_U, \cB G_U) = [\Aut( G_U)/G_U]$ where $G_U$ acts by conjugation. Passing to $\pi_0$ yields a section of $\Out(G_U)$ over $U\times_{\cX} U$ satisfying the cocyle condition, i.e. an element of $\oH^1(\cX, \Out(G))$. Note that there is a natural map $\oH^1(\cX, \Aut(G)) \to \oH^1(\cX, \Out(G))$ and that elements of $\oH^1(\cX, \Aut(G))$ classify $\cX$-forms of $G$. 

\begin{definition}
        Let $\cG \to \cX$ be a gerbe 
        as above. The \emph{band} of $\cG$ is the element $\Band(\cG) \in \oH^1(\cX, \Out(G))$ described above. Similarly, if $G' \to \cX$ is a form of $G$, we define the band of $G'$ as the image $\Band(G') \in \oH^1(\cX, \Out(G))$ of the class of $G'$ under the natural map $\oH^1(\cX, \Aut(G)) \to \oH^1(\cX, \Out(G))$. We say \emph{$\cG$ is banded by $G'$} if $\Band(\cG) = \Band(G')$ and call $\cG$ a $G'$-gerbe. 
\end{definition}

When $G$ is abelian, $G$-torsors and $G$-gerbes are classified by fppf cohomology $\oH^1(\cX,G)$ and $\oH^2(\cX,G)$ respectively (e.g. \cite[Theorems 12.1.5 \& 12.2.8]{olsson2016algebraic}). When $G$ is non-abelian,  we follow \cite{giraud} and  define $\oH^1(\cX, G)$ and  $\oH^2(\cX,G)$ respectively as the set of isomorphism classes of $G$-torsors and $G$-gerbes on $X$. The following is a generalization of \cite[Ch. V, sect. 3.2, p. 75]{Douai} to stacks. 

\begin{proposition}\label{prop_every_gerbe_is_banded} Let $\cG \to \cX$ be a gerbe over an algebraic stack $\cX$. Suppose that there exists an fppf cover $U \to \cX$ such that $\cG|_U \cong \cB G_U$ where $G_U \to U$ is reductive and $U$ is connected. Then there exists a reductive group $G \to \cX$ and a form $G' \to \cX$ of $G$ such that $\Band(G') = \Band(\cG)$.     
\end{proposition}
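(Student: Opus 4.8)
The plan is to first reduce to the case where the band class lands in the right place by hand, then produce the group $G \to \cX$ by fppf descent using the fact that reductive groups have no nontrivial automorphisms beyond inner ones modulo $\Out$, which is itself a well-behaved étale sheaf. Concretely, I would proceed as follows. Since $\cG|_U \cong \cB G_U$, the descent data over $U \times_\cX U$ gives a section of $\Isom_U(\cB G_U, \cB G_U) = [\Aut(G_U)/G_U]$ satisfying the cocycle condition. Its image under $\pi_0$ is a section $\beta$ of $\Out(G_U)$ over $U \times_\cX U$ satisfying the cocycle condition, i.e.\ a Čech $1$-cocycle with values in $\Out(G_U)$; this is exactly $\Band(\cG)$ by definition. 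The key point is that $\Out(G_U)$ is an étale (in fact, after passing to a finer cover, even a constant) group scheme, so this cocycle defines a form of $\Out(G_U)$ over $\cX$; equivalently, a twisted form $\underline{\Out}$ of the outer automorphism sheaf.

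\textbf{Constructing $G$.} To build the reductive group $G \to \cX$, I would lift the $\Out$-cocycle to an $\Aut(G_U)$-cocycle. This is where connectedness of $U$ enters: one can choose a \emph{pinning} (épinglage) of $G_U$ over $U$ — a maximal torus, a Borel containing it, and a basis for the simple root groups — which exists fppf-locally, and after refining the cover we may assume the pinning is defined over $U$. The automorphism group of a pinned reductive group splits canonically as a semidirect product $\Aut(G_U) = G_U^{\mathrm{ad}} \rtimes \Out(G_U)$ (this is \cite[Theorem 7.1.9]{ConradReductive} or SGA3, Exp.\ XXIV), so the $\Out$-cocycle $\beta$ lifts \emph{canonically} to an $\Aut(G_U)$-cocycle via the splitting section $\Out(G_U) \to \Aut(G_U)$. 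One must check this lift satisfies the cocycle condition in $\Aut(G_U)$: it does, precisely because the splitting is a group homomorphism functorial in the pinned group, so it commutes with the descent morphisms. This $\Aut(G_U)$-cocycle is descent data for $G_U$ as a group scheme, hence defines a reductive group $G \to \cX$. By construction $\Band(G) = \beta = \Band(\cG)$. One then sets $G' = G$ (or, if one wants to match the slightly more flexible statement, observes that any form of $G$ with the same image in $\oH^1(\cX, \Out(G))$ works).

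\textbf{Main obstacle.} The main technical subtlety is the cocycle-lifting step: one must be careful that the canonical splitting $\Out \to \Aut$ afforded by a pinning is genuinely functorial with respect to the change-of-cover morphisms appearing in the descent data, so that lifting a cocycle yields a cocycle and not merely a $1$-cochain. Equivalently, on the triple overlap $U \times_\cX U \times_\cX U$ the pinnings pulled back via the two projections differ by an element of $\Aut(G_U)$, and one needs the $\Out$-valued part of this discrepancy to be compatible with $\beta$; this is automatic because $\beta$ was \emph{defined} as the $\pi_0$ of that very discrepancy. A secondary point requiring care is that $\Out(G_U)$ is only étale, not finite type in general over $\cX$ (cf.\ \Cref{lemma_trivializing_the_torus_is_etale}), so one should phrase the descent in terms of sheaves/stacks on the fppf site rather than appealing to representability of $\Aut(G_U)$ by a scheme of finite type — but \cite{giraud} and the framework of Section~2.2 already accommodate this. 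Finally, one should remark that the resulting $G$ is automatically smooth, affine, and has connected reductive geometric fibers since these properties are fppf-local and hold for $G_U$, so $G \to \cX$ is reductive in the sense of the paper's conventions.
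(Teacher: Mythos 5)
Your proposal is correct and follows essentially the same route as the paper: both arguments use connectedness of $U$ to fix the type of $G_U$, pass (after refining the cover) to the pinned Chevalley model, and lift the $\Out$-valued band cocycle to an $\Aut$-valued cocycle via the splitting $\Out(G_0)\to\Aut(G_0)$ of \cite[Theorem 7.1.9(3)]{ConradReductive}. The only difference is bookkeeping: the paper keeps $G$ equal to the split form $G_0\times_{\Z}\cX$ and takes $G'$ to be its twist by the lifted cocycle, whereas you descend $G_U$ directly along that cocycle (producing the paper's $G'$) and set $G'=G$, relative to which both $\Band(G')$ and $\Band(\cG)$ become the trivial class, so the required equality still holds.
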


\begin{proof} Since $U$ is connected, the type of $G_U \to U$ is constant \cite[XXII Section 2]{SGA3III}. Let $G_0 \to \Spec \Z$ be the Chevalley group of this type, that is, the unique split reductive group with the given root data \cite[Theorem 6.1.17]{ConradReductive}. After possibly refining $U$, we may assume that $G_U \cong G_0 \times_\Z U$. Let $G := G_0 \times_{\Z} \cX$. Then $\Band(\cG) \in \oH^1(\cX, \Out(G))$. On the other hand, for the Chevalley model, $\Aut(G_0) \to \Out(G_0)$ splits \cite[Theorem 7.1.9 (3)]{ConradReductive}. Fix such a splitting. This induces a splitting $\Aut(G) \to \Out(G)$ over $\cX$ and thus a lift of $\Band(G)$ to an element of $H^1(\cX, \Aut(G))$. Let $G'$ be the corresponding $\cX$-form of $G$. This is our desired group scheme. 
\end{proof}

\section{Preliminary reductions}

In this section we make some preliminary reductions that show it suffices to prove Theorem \ref{thm:main} in the case where the stack is a gerbe whose geometric automorphism groups are reductive. Throughout this section, we let $R$ be a DVR with fraction field $K$ and residue field $k$ and we assume that $\cX$ is an algebraic stack with good moduli space $\cX \to \spec R$. 

\subsection{Passing to the polystable locus} We first show that it suffices to prove Theorem \ref{thm:main} in the case where the $K$-point of $\cX$ is polystable (see e.g. \cite[Definition 3.2 and Lemma 3.3]{bejleri2024proper}).  We begin with the following.

\begin{lemma}\label{lemma_kempf_for_lin_reductive}
    Let $V = \spec A \to \Spec K$ with a $\GL_n$ action such that $[V/\GL_n] \to \spec K$ is a good moduli space. Let $x \in V(K)$ be any $K$-point. Then there is a one parameter subgroup $\lambda:\Gm\to \GL_n$ defined over $K$ such that $\lim_{t\to 0}\lambda(t)x$ exists and belongs to the unique closed orbit of $V$.
\end{lemma}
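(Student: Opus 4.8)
The plan is to reduce the statement to the classical Hilbert--Mumford criterion over $\bar K$ and then to descend the resulting one-parameter subgroup to $K$ itself by means of Kempf's theory of optimal one-parameter subgroups; this last input is exactly what continues to work when $K$ is not perfect.

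\emph{Reduction over $\bar K$.} First, since good moduli spaces are stable under base change, $[V_{\bar K}/\GL_{n,\bar K}]\to\spec\bar K$ is again a good moduli space, so $V_{\bar K}$ contains a unique closed $\GL_{n,\bar K}$-orbit, say $Z$. The scheme-theoretic image $W\subseteq V$ of the orbit map $\GL_{n,K}\to V$, $g\mapsto gx$, is a nonempty $\GL_n$-stable closed subscheme, and every nonempty $\GL_{n,\bar K}$-stable closed subscheme of $V_{\bar K}$ contains a closed orbit; hence $Z\subseteq W_{\bar K}$. Choosing a $\bar K$-point $y$ of $Z$, we get $y\in\overline{\GL_n\cdot x_{\bar K}}$ with $\GL_n\cdot y=Z$ closed, so the Hilbert--Mumford criterion yields a one-parameter subgroup of $\GL_{n,\bar K}$ whose limit at $x$ lies in $Z$. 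In particular, the set of one-parameter subgroups of $\GL_{n,\bar K}$ driving $x$ into a closed orbit is nonempty.

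\emph{Descent to $K$.} This is the heart of the argument. I would fix a $\GL_n(\bar K)$-invariant norm on the cocharacter lattices of maximal tori of $\GL_n$, and among the one-parameter subgroups $\lambda$ of $\GL_{n,K}$ for which $\lim_{t\to0}\lambda(t)x$ exists and has closed $\GL_{n,\bar K}$-orbit --- a set which, after base change to $\bar K$, is nonempty by the previous step --- consider the ones optimizing Kempf's length functional. Kempf's theorem, valid over an arbitrary (in particular imperfect) field, ensures that the associated optimal parabolic subgroup $P(x)\subseteq\GL_{n,K}$ is canonical, hence defined over $K$, and that the optimal one-parameter subgroups form a single $P(x)$-conjugacy class; since parabolic subgroups of $\GL_n$ over a field have Levi factors defined over $K$, whose centres carry plenty of $K$-rational cocharacters, this class contains a one-parameter subgroup $\lambda$ defined over $K$. (When $K$ is perfect one could instead note that the optimal class is $\Aut(\bar K/K)$-stable and descend a $\bar K$-representative by ordinary Galois descent; invoking Kempf's theorem in full is precisely what circumvents the failure of that argument over an imperfect field.)

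\emph{Conclusion.} For a one-parameter subgroup $\lambda$ defined over $K$, the limit $\lim_{t\to0}\lambda(t)x$, when it exists, is automatically a $K$-point of $V$; by construction it has closed $\GL_{n,\bar K}$-orbit, so it lies in the unique closed orbit of $V$, which is thereby seen to be nonempty, as desired. I expect the descent step to be the only real obstacle: the reduction over $\bar K$ and the final descent of the limit point are formal, whereas producing a $K$-rational optimal one-parameter subgroup when $K$ is imperfect genuinely relies on the rigidity of Kempf's optimal parabolic rather than on a naive Galois descent.
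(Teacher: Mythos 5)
Your reduction over $\bar K$ and the final observation that a $K$-rational one-parameter subgroup produces a $K$-rational limit are fine, but the descent step — which you correctly identify as the heart of the matter — contains a genuine gap. Kempf's theorem on optimal destabilizing one-parameter subgroups is \emph{not} valid over an arbitrary field: the rationality of the optimal parabolic $P(x)$ and of the optimal class is proved in \cite{kempf1978instability} only for perfect fields (the argument is Galois descent from $K^{\mathrm{sep}}=\bar K$, which is exactly what breaks down when $K$ is imperfect). Over an imperfect field the optimal class is a priori only defined over a purely inseparable extension, and there is no general descent mechanism for it; this failure is precisely the reason the paper cannot quote Kempf directly in positive characteristic. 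So your parenthetical has the logic reversed: Kempf's theorem \emph{is} the Galois-descent argument, and it does not circumvent imperfectness.

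The paper's actual route through the imperfect case uses the rational Hilbert--Mumford theorem of Bate--Martin--R\"ohrle--Tange \cite[Theorem 1.3]{bate2017cocharacter}, which over any field produces a $K$-defined cocharacter $\lambda$ whose limit lies in the unique \emph{cocharacter-closed} $\GL_n(K)$-orbit in the cocharacter-closure of $\GL_n(K)x$. This is weaker than what you need, because over an imperfect field a cocharacter-closed orbit need not coincide with the Zariski-closed orbit. The remaining work — entirely absent from your proposal — is to show that in the present situation every cocharacter-closed orbit lies in the unique closed orbit of $V$. The paper does this by passing to $K^{\mathrm{sep}}$ (where cocharacter-closedness is preserved by \cite[Theorem 1.5(ii)]{bate2017cocharacter}), using that the stabilizer of the closed orbit is linearly reductive so its identity component is a split torus, and observing that a cocharacter of a split torus over $\bar K$ is automatically defined over $K^{\mathrm{sep}}$; combining this with Kempf over $\bar K$ forces the cocharacter-closed orbit into the closed orbit. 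In short: you need the cocharacter-closure machinery as a replacement for Kempf, not Kempf itself, and you then need the linear reductivity of the stabilizer to bridge cocharacter-closed and closed orbits.
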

\begin{proof}
For perfect fields this follows from \cite{kempf1978instability}, especially \cite[\S 4]{kempf1978instability}.
In characteristic $p$ instead we proceed as follows. By  \cite[Theorem 1.3]{bate2017cocharacter}, the cocharacter closure of $\GL_n(K)x$ contains a unique cocharacter closed $\GL_n(K)$ orbit and there is a $\lambda: \mathbb{G}_m \to \GL_n$ defined over $K$ such that $\lim_{t \to 0}\lambda(t)x$ exists and belongs to this cocharacter closed orbit. We wish to show that this limit in fact belongs to the closed orbit.
Thus it suffices to check that every cocharacter closed orbit of the $\GL_n(K)$ action on $V$ is contained in the unique closed orbit of $V$. We refer the reader to \cite{bate2017cocharacter} for the definition of cocharacter closed and cocharacter closure. Toward this end, let $z\in V$ be a point in a cocharacter-closed orbit. Aftering passing to the separable closure $K^s$ of $K$, we can write $[V/\GL_n] \times_K K^{s} \cong [W/G]$ where $W$ is affine and $G \subset \GL_n$ is the stabilizer of the closed orbit. By assumption, $G$ is linearly reductive so its connected component is a torus $T \cong \mathbb{G}_{m,K^s}^n$. By \cite[Theorem 1.5(ii)]{bate2017cocharacter}, $z$ remains cocharacter closed over $K^s$ and under the isomorphism with $[W/G]$ corresponds to a cocharacter closed orbit of $G(K)$. Again by \cite{kempf1978instability}, after passing further to the algebraic closure $\overline{K}/K^{sep}$, there is a 1-parameter
subgroup $\lambda_{\overline{K}}:(\Gm)_{\overline{K}}\to (\Gm^n)_{\overline{K}}$ such that $\lim_{t\to 0}\lambda_{\overline{K}}(t)z$ exists and is contained in the closed orbit. But $\lambda$ is of the form $t\mapsto (t_1^{a_1},\ldots,t_n^{a_n})$ with $a_i\in\bZ$, so it is the pullback of a character $\lambda_{K^s}:(\Gm)_{K^s}\to (\Gm^n)_{K^s}$. Thus $\lim_{t \to 0}\lambda_{K^s}(t)z$ exists and is contained in the closed orbit. Since $z$ is cocharacter closed, this limit must also be contained in $G(K)z$ so $z$ is contained in the closed orbit of $V\times_K K^s$. As belonging to the closed orbit can be checked after a separable field extensions, then $z$ is also in the closed orbit of $V$, as desired. 
\end{proof}

\begin{proposition}\label{prop_knowing_the_result_for_poli_pts_suffices}
Let $R$ be a DVR with generic point $\eta = \spec K$. Consider an algebraic stack $\cX$ with a good moduli space $\xi:\cX\to \spec R$. Assume that
\begin{enumerate}
\item $\cX$ admits a $K$-point $\phi:\eta\to \cX$, and 
\item every $\psi:\eta\to \cX$ sending $\eta$ to the polystable point of $\cX_\eta$ extends to a section over some root stack $\cR = \sqrt[r]{\spec R}$ of $\spec R$.
\end{enumerate}
Then $\phi$ extends to a section over some root stack $\cR$ of $\spec R$. 
\end{proposition}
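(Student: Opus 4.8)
The plan is to degenerate $\phi$ to its polystable reduction $\psi$ along a one-parameter subgroup defined over $K$ \emph{itself} (so that no residue field is changed), to extend $\psi$ using hypothesis~(2), and then to transport that extension back to $\phi$. First, $\phi$ factors through the generic fibre $\cX_\eta:=\cX\times_{\spec R}\spec K$, whose good moduli space is $\spec K$; thus $\cX_\eta$ has a unique closed point $x_0$ with linearly reductive stabilizer, and $x_0$ is $K$-rational (its residue field is finite over $K$ since $\cX_\eta$ is of finite type over $K$, and a $K$-point landing at $x_0$ is produced in the next step). Applying the local structure theorem \cite[Thm.~6.1]{AHRetalelocal} at $x_0$ and shrinking the étale slice so that its good moduli space is $\spec K$, the resulting strongly étale presentation is an \emph{isomorphism} $\cX_\eta\cong[\spec A_0/G_{x_0}]$ over $K$; embedding $G_{x_0}\hookrightarrow\GL_N$ (possible since $G_{x_0}$ is linearly reductive, hence reductive, so $\GL_N/G_{x_0}$ is affine) rewrites this as $\cX_\eta\cong[\spec A/\GL_N]$ over $K$, still with good moduli space $\spec K$.

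Next I would carry out the Kempf step. As $\oH^1(K,\GL_N)=0$, the point $\phi$ is given by a point $x\in\spec A(K)$, and \Cref{lemma_kempf_for_lin_reductive} produces a one-parameter subgroup $\lambda\colon\Gm\to\GL_N$ over $K$ with $x_0=\lim_{t\to0}\lambda(t)x$ in the unique closed orbit. The pair $(\lambda,x)$ assembles into a morphism $f\colon\Theta_K=[\bA^1_K/\Gm]\to\cX$ with $f|_1=\phi$; since $\lambda$ fixes $x_0$, it defines a cocharacter $\lambda_0\colon\Gm_K\to G_{x_0}$, the restriction $f|_0\colon B\Gm_K\to\cX$ is $B\lambda_0$ followed by $BG_{x_0}\hookrightarrow\cX_\eta\subset\cX$, and its underlying $K$-point $\psi$ lands in the polystable point $[x_0]$. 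This $\psi$ is the $K$-point promised above, and it is exactly of the kind to which hypothesis~(2) applies.

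The transport is the step I expect to be the main obstacle. By hypothesis~(2), $\psi$ extends to a section $\tilde\psi\colon\cR_0\to\cX$ over a root stack $\cR_0=\sqrt[r_0]{\spec R}$. Using that good moduli space morphisms are $\Theta$-reductive and $S$-complete \cite{AHLH}, after enlarging $\cR_0$ one may take $\tilde\psi$ so that its value on the closed point is a polystable point of $\cX\times_{\spec R}\spec k$; then the relative automorphism group scheme $\mathcal A:=\tilde\psi^*I_{\cX/\spec R}\to\cR_0$ has linearly reductive fibres, so by \cite{ER} its reduced identity component $\mathcal G:=(\mathcal A^0)_{\mathrm{red}}$ is a reductive group scheme over $\cR_0$. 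The cocharacter $\lambda_0$ lands in $\mathcal G_K$, and the argument of \Cref{prop_there_is_a_max_torus} — henselize, reduce to the residual gerbe $B\bmu_r$ of the closed point where \Cref{lemma_there_is_a_bmu_point} applies, then lift back by \cite[Prop.~7.9]{AHRetalelocal} — yields, after a further root stack $\cR\to\cR_0$, a maximal torus of $\mathcal G_\cR$ whose generic fibre contains $\lambda_0(\Gm)$; hence $\lambda_0$ extends to $\tilde\lambda_0\colon\Gm_\cR\to\mathcal G_\cR\subset\mathcal A_\cR$. Now the attracting cell of $\tilde\psi$ for $\tilde\lambda_0$ — a substack of $\cX\times_{\spec R}\cR$, affine over $\cR$, carrying the $\Gm$-action induced by $\tilde\lambda_0$ with strictly positive weights, and with generic fibre the attracting cell of $\psi$ in $\cX_\eta$, which contains $\phi$ — is defined over $\cR$; being positively graded and assembled from line bundles on $\cR$, it acquires, after rescaling $\phi$ by a suitable power of the uniformizer (absorbed by the grading, hence invisible in $\cX$), a section over $\cR$ extending $\phi$. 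Viewed in $\cX\times_{\spec R}\cR$, this is the required extension $\cR\to\cX$. The two delicate points are (i) the extension of the Kempf cocharacter, which is where the good (rather than merely adequate) moduli space hypothesis is used, through \cite{ER} and $\Theta$-reductivity; and (ii) the spreading-out of the generic point $\phi$ over $\cR$ inside the positively graded attracting cell.
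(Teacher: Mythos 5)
Your first half coincides with the paper's proof: the local structure theorem over $K$, \Cref{lemma_kempf_for_lin_reductive}, and the resulting map $\Theta_K=[\bA^1_K/\Gm]\to\cX$ carrying $1$ to $\phi$ and $0$ to a $K$-point over the polystable point are exactly the paper's first step, and hypothesis (2) is applied to the same limit point. Where you diverge is the transport step, and there the argument has genuine gaps. The most serious one is the extension of the cocharacter: \Cref{prop_there_is_a_max_torus} produces \emph{some} maximal torus of $\mathcal G$ over $\cR$, with no control whatsoever on its generic fibre, so it does not give a maximal torus whose generic fibre contains the prescribed subtorus $\lambda_0(\Gm)\subset\mathcal G_K$. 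Extending a given $K$-cocharacter of $\mathcal G_K$ to a cocharacter of $\mathcal G$ over a root stack is a nontrivial integrality statement (it is essentially the question about $\tfrac1n\Z$-points of the Bruhat--Tits building raised in the remark after \Cref{cor:torsors}), and nothing in the paper or in your sketch supplies it. Without $\tilde\lambda_0$ the attracting cell over $\cR$ does not exist, and the rest of the construction collapses.

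The second gap is the attracting cell itself: you need a $\Gm_\cR$-equivariant affine local model $[\spec B/\mathcal A]$ around the section $\tilde\psi$ in which $\tilde\psi$ is the vertex of the nonnegative-weight part and in which the weight-zero coordinates of $\phi$ are already integral; none of this is constructed, and the "rescale by $\lambda(\pi^m)$" step silently uses all of it. The paper's proof avoids both difficulties entirely: it never extends $\lambda_0$ over $\cR$. Instead it forms the Ferrand pushout $\spec A$ of $\bA^1_K\leftarrow\spec K\to\spec R$ (with $A=R[t,t/\pi,t/\pi^2,\dots]$), checks that the $n$-th root stack of this pushout is again a pushout, glues the map $\bA^1_K\to\cX$ with the extension $\cR\to\cX$ of the limit point, descends the resulting map to $\sqrt[n]{\spec R[t,t/\pi^k]}$ for some finite $k$ by finite presentation of $\cX$, and then restricts along the explicit arc $s\mapsto\pi$, $t\mapsto\pi^{k+1}$. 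That last restriction is the rigorous incarnation of your "rescale by a power of the uniformizer", but it requires no group scheme over $\cR$ and no local model. I would recommend either adopting the pushout argument or, if you want to pursue your route, isolating and proving the cocharacter extension statement as a separate lemma, since it is not a consequence of anything currently in the paper.
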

\begin{proof}
We first prove that  there is a map $\Theta_K \colon = [\bA^1_{K}/\Gm]\to \cX$ sending $1$ to $\phi(\eta)$ and $0$ to the unique polystable point of $\cX_K$ over $\eta$. Indeed, from \cite[Theorem 6.1]{AHRetalelocal}, we can write $\cX_K = [V/\GL_n]$ with $V$ affine and $\cX_K \to \eta$ a good moduli space. By \Cref{lemma_kempf_for_lin_reductive}, there is a 1-parameter subgroup defined over $K$ such that $\lambda : \mathbb{G}_m \to \GL_n$ such that $\lim_{t \to 0}\lambda(t)\phi(\eta)$ is in the unique closed orbit of the $\GL_n$ action on $V$. Taking quotients of the induced map $\mathbb{A}^1_K \to \cX_K$ yields the required map $\Theta_K \to \cX$.

By assumption, up to replacing $\spec R$ with a root stack $\cR$, we can extend $\spec K\xrightarrow{\iota}\bA^1_{K}\to\cX$ to a morphism $\cR\to \cX$, where $\iota$ is the inclusion of the origin in $\bA^1_{K}$. Consider then the following diagram
\[
\xymatrix{\spec K\ar[r] \ar[d]_\iota & \spec R\ar[d]^\beta \\ \bA^1_{K}\ar[r]^\alpha & \spec A.}
\]
where $A$ is the fiber product of $K[t]\times_{K} R$. More explicitly, 
\[
A = \{p(t)\in K[t]:p(0)\in R\} = R\left[t, \frac{t}{\pi}, \frac{t}{\pi^2},\ldots\right] \]
where $\pi$ is a uniformorizer in $R$. As $\spec K\to \bA^1_{K}$ is a closed embedding and $\spec K\to\spec R$ is affine, $\spec A$ is a pushout of the previous diagram in algebraic stacks by \cite[Theorem 4.2]{alper2024artin}. Consider the $n$-th root stack of $\spec A$ at the Cartier divisor $\pi=0$. This leads to the following cartesian diagram:

\[
\xymatrix{\spec K\ar[r] \ar[d]^\iota & \cR\ar[d]^b \\ \bA^1_{K}\ar[r]^a & \sqrt[n]{\spec A}.}
\]
We argue now that this diagram is also a pushout in algebraic stacks. By \cite[Theorem 4.2]{alper2024artin} it suffices to check that $\cO_{\sqrt[n]{\spec A}}\to a_*\cO_{\bA^1_{K}}\times_{(a\circ\iota)_*\cO_{\spec K}} b_*\cO_\cR$ is an isomorphism. This can be checked flat-locally on $\sqrt[n]{\spec A}$, so consider the cover $\spec A[r]/(r^n-\pi)\to \sqrt[n]{\spec A}$. This cover pulls back to the diagram of schemes on the left, leading to the diagram of rings on the right
\[
\xymatrix{\spec K[r]/(r^n-\pi) \ar[r] \ar[d] & \spec R[r]/(r^n-\pi) \ar[d] \\ \spec K[t,r]/(r^n-\pi)\ar[r] & \spec A[r]/(r^n-\pi)}\text{ } \text{ }\text{ }\text{ } \text{ }\xymatrix{K[r]/(r^n-\pi) & R[r]/(r^n-\pi)\ar[l] \\ K[t,r]/(r^n-\pi)\ar[u]^-{t=0} & A[r]/(r^n-\pi)\ar[u]\ar[l].}
\]
It suffices to note that the right-most diagram is a pull-back in rings. 

By the universal property of pushouts, we have a map $\sqrt[n]{\spec A} \to \cX$. Since 
\[
A=\textrm{colim}_l R\left[\frac{t}{\pi^m}\mid m\leq l\right]=\textrm{colim}_l R\left[\frac{t}{\pi^l}\right]
\]
and $\cX$ is of finite type, by the functorial characterization of locally finitely presented morphisms, the map $\sqrt[n]{\spec A}\to \cX$ factors via $\sqrt[n]{\spec R[\frac{t}{\pi^k}]} = \sqrt[n]{\spec R[s,t]/(s\pi^k-t)}\to \cX$ for some $k\gg 0$. Consider the morphism $i:\spec R\to \spec R[s,t]/(s\pi^k-t)$ induced by the $R$-algebra homomorphism $j:R[s,t]/(s\pi^k-t)\to R$ that sends $s\mapsto \pi$ and $t\mapsto \pi^{k+1}$. We have that:
\begin{enumerate}
    \item the image $i(\eta)$ of the generic point of $\spec R$ is contained in $D(t)\cap D(\pi) = \mathbb{A}^1_K\setminus \{0\}$ as $j(t\pi)\neq 0$
    \item the image $i(x)$ of the closed point of $\spec R$ is the prime ideal $(\pi, s, t)$, which agrees with the image of the closed point of $\spec R$ via $\beta$.
\end{enumerate}
Then it induces a morphism $\sqrt[n]{\spec R}\to \sqrt[n]{\spec R[s,t]/(s\pi^k-t)}$, which if postcomposed with the morphism $\sqrt[n]{\spec R[s,t]/(s\pi^k-t)}\to \cX$ yields a morphism $\sqrt[n]{\spec R} \to \cX$ whose restriction to the generic point $\eta \to \cX$ is isomorphic to $\phi : \eta \to \cX$ by $(1)$, and which sends the special point to a point over the closed point of $\spec R$ by $(2)$.
\end{proof}

\subsection{Reduction to the case of a gerbe}

The next result shows that it suffices to prove Theorem \ref{thm:main} for gerbes over root stacks.

\begin{theorem}\label{theorem_reduction_to_gerbe}
    Let $R$ be a DVR, with generic point $\eta = \spec K$. Consider a stack $\cX$ with good moduli space $\pi:\cX \to \spec R$, and suppose $\phi:\eta \to \cX^{ps} \subset \cX$ is a polystable $K$-point. Then there exists a root stack $\cR_n = \sqrt[n]{\spec R}$ and a diagram as follows
    \[
    \xymatrix{&\cG\ar[r]^p \ar[d]^-{\pi'} & \cX\ar[d]^-\pi \\ \eta\ar[r]\ar[ru]^{\phi'} & \cR_n\ar[r]_-q & \spec R}
    \]
    where $q$ is the coarse moduli space map, $\phi$ and $p\circ \phi'$ are isomorphic and $\pi'$ is a gerbe banded by a smooth, connected, linearly reductive group scheme $G \to \cR_n$. Moreover, the closed point of $\cG$ maps to a polystable point in $\cX$.
\end{theorem}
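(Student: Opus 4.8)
The plan is to build the gerbe by peeling off the reductive stabilizer of the polystable point using the local structure theory of good moduli spaces together with canonical reduction of stabilizers. First I would pass to a henselization: since the statement about existence of a root stack and a diagram is local on $\spec R$ (a root stack of $\spec R^h$ descends, and the residue field is unchanged by henselization), I may assume $R$ is henselian, even strictly henselian if convenient, so that the residual gerbe of the closed point of $\cX$ over $k$ and the structure theory of \cite{AHRetalelocal} are available in the cleanest form. Let $x_0 \in |\cX|$ be the closed point of the fiber $\cX_k$; by \cite[Theorem 6.1]{AHRetalelocal} there is an \'etale neighborhood $[\spec B/\GL_N] \to \cX$ of $x_0$ which is a good moduli space onto an \'etale neighborhood of the image of $x_0$ in $\spec R$, and the stabilizer $H_0$ of a point above $x_0$ is linearly reductive. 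The reduced identity component $(H_0)_{\mathrm{red}}^\circ$ is then smooth, connected and linearly reductive over $k$.

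Next I would spread this out in a family. The key input is canonical reduction of stabilizers \cite{ER}: working on the coarse-space-like quotient of the closed locus, one obtains, after an \'etale base change and a root stack $\cR_n \to \spec R$ absorbing the relevant ramification, a closed substack $\cZ \subseteq \cX \times_{\spec R} \cR_n$ — the locus of polystable points through which $\phi$ factors after extending $\eta \to \cR_n$ — carrying a smooth connected linearly reductive group scheme $G \to \cR_n$ such that $\cZ \to \cR_n$ is a $G$-gerbe. The role of the root stack is exactly as in \Cref{prop_there_is_a_max_torus}: the $\bmu_r$-action on the central fiber stabilizer may force one to adjoin an $r$-th root of the uniformizer before the reduced identity component behaves well in the family, i.e. before one can find the group scheme $G \to \cR_n$ interpolating $(H_0)_{\mathrm{red}}^\circ$ and the stabilizer of the polystable $K$-point. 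Having produced $\cG := \cZ$ with its gerbe structure $\pi' : \cG \to \cR_n$ banded by $G$, the map $p : \cG \hookrightarrow \cX \times_{\spec R} \cR_n \to \cX$ is the natural one, $q$ is the coarse space map $\cR_n \to \spec R$ (which is an isomorphism away from the closed point and the root-stack structure map there), and $\phi'$ is the factorization of the extended $\eta \to \cG$ coming from the fact that $\phi$ lands in $\cX^{ps}$. The final sentence — that the closed point of $\cG$ maps to a polystable point of $\cX$ — is automatic from the construction, since $\cG$ was carved out as (an open-and-closed piece of) the polystable locus.

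The main obstacle I expect is the family statement: upgrading the pointwise fact ``$(H_0)_{\mathrm{red}}^\circ$ is smooth connected linearly reductive'' to a genuine group scheme $G \to \cR_n$ that is simultaneously the band of the gerbe and restricts correctly at both the generic and closed points. Over a field this is standard, but over the root stack $\cR_n$ one must control how the component group and the non-reducedness of the stabilizer vary, and it is precisely here that the good moduli space hypothesis (rather than merely an adequate moduli space) is used, via \cite{ER} and the good behavior of the reduced identity component in families; determining the correct $n$ so that this works is the crux, and mirrors the argument of \Cref{prop_there_is_a_max_torus}. A secondary technical point is checking that the various \'etale-local constructions (the neighborhood from \cite{AHRetalelocal}, the reduction from \cite{ER}) glue, for which one uses that gerbes and their bands descend along \'etale covers as set up in \Cref{prop_every_gerbe_is_banded}, together with the henselian hypothesis to propagate the section $\Gamma(\cZ/\cR_n)$ from the central fiber as in the proof of \Cref{prop_there_is_a_max_torus}.
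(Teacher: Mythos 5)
Your proposal correctly identifies two of the main inputs (the local structure theorem of \cite{AHRetalelocal} and canonical reduction of stabilizers \cite{ER}), but the architecture of the argument has gaps that I think would be fatal as written. First, the object you call $\cZ$, ``the locus of polystable points,'' is not a closed substack of $\cX$ in general, and in particular is not a gerbe over the base: the paper instead takes $\cX''$ to be the \emph{closure of the single polystable $K$-point} $\phi(\eta)$, observes that a closed substack of a stack with a (separated) good moduli space again has one, and only then applies Edidin--Rydh. The Edidin--Rydh step is a genuine birational modification $\cX' \to \cX''$ (a sequence of saturated blow-ups) whose purpose is to make the stabilizer dimension constant; it is not a base change along a root stack, and without constant stabilizer dimension there is no gerbe to speak of. Second, and most importantly, your account of where the root stack comes from is not the right mechanism. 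After Edidin--Rydh one factors $\cX' \to \cY \to \spec R$ with $\cX' \to \cY$ a gerbe for the smooth connected identity components of the stabilizers and $\cY \to \spec R$ a \emph{tame} coarse moduli space (\cite[Proposition B.2]{ER}); the root stack $\cR_n$ arises by applying the Bresciani--Vistoli tame valuative criterion to produce a section of $\cY \to \spec R$ over $\cR_n$, and $\cG$ is then the pullback $\cX' \times_{\cY} \cR_n$. This step --- extending the section of the tame quotient, which is where \cite{bresciani2024arithmetic} enters --- is entirely absent from your plan, and the heuristic you offer instead (``the $\bmu_r$-action on the central fiber stabilizer forces one to adjoin a root of the uniformizer,'' mirroring \Cref{prop_there_is_a_max_torus}) does not produce the gerbe structure. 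Relatedly, the band $G \to \cR_n$ is not constructed by interpolating reduced identity components of fibers by hand; it comes for free from \Cref{prop_every_gerbe_is_banded} once the gerbe structure is in place.

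Finally, the ``moreover'' clause is not automatic. Since $\cX'$ is obtained from $\cX''$ by saturated blow-ups (open substacks of blow-ups), one must check that the closed point of $\cG$ is not discarded along the way, i.e.\ that each saturated blow-up meets the exceptional divisor and that the exceptional divisor contains the unique closed (polystable) point over the closed point of $\spec R$; the paper does this using \cite[Proposition 3.7(3)]{ER}. Your claim that this is ``automatic since $\cG$ was carved out as the polystable locus'' begs the question, since $\cG$ is not the polystable locus but the output of a nontrivial modification. The initial reduction to henselian $R$ is harmless but unnecessary here.
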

The argument follows closely many of the ideas in \cite{bejleri2024proper}. 

\begin{proof} Let $x \in \cX_K$ be the polystable point of $\cX_K$ and let $\cX''$ be the closure of $\{x\}\subset \cX$. Since $\cX\to \spec R$ is universally closed, and since closed substacks of a stack with a separated good moduli space still admit a good moduli space \cite[Lemma 4.14]{Alper}, the stack $\cX''$ admits a good moduli space which is closed in $\spec R$ and which contains $\eta$. In other terms, the composition $\cX''\to \cX\to\spec R$ is a good moduli space. Moreover, the point $\eta$ is a stable point of $\cX'' \to \spec R$ so $\cX'' \to \spec R$ is a stable good moduli space, i.e. the stable locus is dense. Thus we may apply canonical reduction of stabilizers \cite{ER} which yields a modification $\cX' \to \cX''$ such that
\begin{itemize}
    \item $\cX'\to \cX''$ is an isomorphism over $\eta$,
    \item the dimension of the stabilizers of the geometric points of $\cX'$ is constant, and
    \item $\cX'$ admits a good moduli space which is obtained by a sequence of blow-ups of $\spec R$.
\end{itemize}
Since $R$ is a DVR, the sequence of blowups of $\spec R$ must be the identity an the composition $(\cX')_{red} \to \cX' \to \spec R$ is a good moduli space by \cite[Theorem 4.16(viii)]{Alper} so without loss of generality we may assume $\cX'$ is reduced. By \cite[Proposition B.2]{ER} and its proof (especially \cite[Proposition B.6]{ER}, see also \cite[Proposition 2.3]{bejleri2024proper}) we can factor $\cX'\to \spec R$ as $\cX'\to \cY \to \spec R$ where the first map is a gerbe whose geometric automorphism groups are the smooth connected components of the identity of the geometric automorphism groups of $\cX'$ and $\cY \to \spec R$ is a tame coarse moduli space. By \cite[Proposition 3.13]{Alper}, $\cX' \to \cY$ is also a good moduli space so the geometric automorphism groups are also linearly reductive. 

Note that by construction the $K$-point $\phi : \eta \to \cX$ factors through $\cX' \to \cX$.  Now \cite[Theorem 3.1]{bresciani2024arithmetic} applies to the map $\cY \to \spec R$ so $\eta \to \cX' \to \cY$ extends to a section of $\cY \to \spec R$ up to replacing $\spec R$ with a root stack $\cR_n$. Then $\cG \colon= \cX'\times_{\cY}\cR_n\to \cR_n$ is a good moduli gerbe with smooth, connected, linearly reductive geometric automorphism groups. It follows from Proposition \ref{prop_every_gerbe_is_banded} that any such gerbe is banded by a smooth, connected, linearly reductive group scheme over $\cR_n$ (i.e. a form of $G$ over $\cR_n$). Moreover, by construction $\phi$ factors through $\phi' : \eta \to \cG$ as required. 

As for the moreover part, observe that at each saturated blow-up in the algorithm of Edidin and Rydh \cite{ER}, we are taking an open substack (namely, using their notation, a \textit{saturated blow-up}) of the blow-up of the polystable point over the closed point of $\cR$. Hence it suffices to show that the saturated blow-up intersects the exceptional divisor of the blow-up, and each blow-up in the Edidin-Rydh algorythm is centered at a point on the exceptional divisor of the previous blow-up. Both these statement follow from \cite[Proposition 3.7 (3)]{ER}: the first one follows from \cite[Proposition 3.7 (3)]{ER} directly, the second one follows as at each step we are blowing up the polystable point, which is the unique closed point (over the closed point of $\spec R$). As the exceptional divisor is closed, it will contain the unique closed polystable point. Then any point on the last exceptional divisor we extract will map to the polystable point of $\cX$.
\end{proof}

\section{Gerbe case}
The goal of this section is to prove the root stack valuative criterion for gerbes banded by a reductive group. When the residue characteristic is finite, we need a tameness assumption on the reductive group. 

\begin{definition}\label{def_tame} Let $G \to S$ be a reductive group over a stack  $S$. We say $G$ \textit{has tame Weyl group} if for all geometric points $x : \spec k \to S$, the order of the Weyl group of $G_x$ is coprime to the characteristic of $k$.
\end{definition}

\begin{theorem}\label{thm_case_of_gerbe_over_root_stack}
    Let $R$ be a henselian DVR with fraction field $K$ and let $\cR$ be a root stack of $\spec R$. Let $\cG\to \cR$ be a gerbe banded by a reductive group $G \to \cR$ which is either special or has tame Weyl group. Suppose that $\cG$ admits a $K$-point $\phi: \spec K \to \cG$. Then $\phi$ extends to a section $\cR \to \cG$ up to replacing $\cR$ with a further root stack. 
\end{theorem}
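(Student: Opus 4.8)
The plan is to reduce the problem to a statement about $G$-torsors and then to a purely "local" computation about the gerbe class over the residual gerbe $B\bmu_r$.

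\textbf{Step 1: Reduce from a gerbe to its band and then to the split form.} Using \Cref{prop_every_gerbe_is_banded}, the gerbe $\cG\to\cR$ is banded by a form $G'\to\cR$ of a Chevalley group $G_0\times_\Z\cR$. I would first treat the case where $\cG$ is the \emph{neutral} gerbe $BG'$: then a $K$-point of $\cG$ is the same as a $G'_K$-torsor on $\spec K$, and extending the $K$-point to a section of $BG'\to\cR$ after a further root stack amounts to extending that torsor to a $G'$-torsor on a further root stack $\mathcal R'$. For a general gerbe banded by $G'$, a $K$-point trivializes $\cG_K$, so $\cG_K\cong BG'_K$; the obstruction to $\cG$ itself being neutral lives in $\oH^2$, but since we are allowed to pass to a further root stack and only need existence of a section after doing so, I expect one reduces to the neutral case after adjoining enough roots — this is where having $R$ henselian (so that $\cR$'s étale-local structure is controlled and $\oH^2$ can be killed on $B\bmu_N$) should be used.

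\textbf{Step 2: Spread out the torsor over the generic point to the whole root stack.} With $R$ henselian, the pair $(\cR,\cR_0)$ is henselian where $\cR_0=B\bmu_r$ is the residual gerbe at the closed point (as in the proof of \Cref{prop_there_is_a_max_torus}). A $G'$-torsor on $\spec K$ together with the goal of extending it over $\cR$ should, by the henselian approximation / lifting results of \cite{AHRetalelocal} (Proposition 7.9 there), be reduced to producing a $G'_0$-torsor on $\cR_0=B\bmu_r$ — i.e. a $\bmu_r$-equivariant $G'$-torsor over the residue field $k$, equivalently a homomorphism $\bmu_r\to G'_k$ up to conjugacy (by \cite[Lemma 7.1]{martens2015}), whose associated torsor over $\Frac(R)$ matches $\phi$. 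The freedom to replace $\cR$ by a further root stack $\sqrt[s]{\cR}$ corresponds to replacing $\bmu_r$ by $\bmu_{rs}$, i.e. to allowing the cocharacter/homomorphism to have larger order.

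\textbf{Step 3: The key input — finding the right cocharacter via tameness or specialness.} This is the crux and the main obstacle. I need: any $G'_K$-torsor, after base change to a large enough root stack, becomes "étale-locally trivial in a $\bmu_N$-equivariant way," which by the structure of torsors over a henselian DVR's root stack should come down to a concrete group-theoretic fact. If $G'$ is \emph{special}, every torsor trivializes Zariski-locally, so over $\spec K$ it is trivial outright and there is essentially nothing to extend — the section over $\cR$ (or a root stack) comes from the trivial torsor, adjusted $\bmu_r$-equivariantly using \Cref{lemma_there_is_a_bmu_point} to place it on a maximal torus. If $G'$ has \emph{tame Weyl group}, I would instead invoke a tame local description: a $G'_K$-torsor over a henselian DVR with tame Weyl group is controlled by a "parahoric" datum / a point of the Bruhat–Tits building with denominators bounded by $|W|$-torsion, so adjoining a root of that order makes the torsor come from a maximal torus, and tori are handled by \Cref{lemma_trivializing_the_torus_is_etale} and \Cref{prop_there_is_a_max_torus}. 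The extension case (with $1\to G_1\to G\to G_2\to1$) is then handled by first solving the problem for the special quotient $G_2$, twisting, and solving the residual $G_1$-problem using tameness of its Weyl group.

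\textbf{Step 4: Assemble and record the closed fiber.} Having produced a $G'$-torsor (hence a section of $BG'$, hence a section of $\cG$) over a further root stack $\mathcal R'\to\cR$ restricting to $\phi$ over $K$, I would note that by construction the torsor is obtained from data on $\cR_0=B\bmu_r$, so its restriction to the closed point is controlled; together with \Cref{rem:representable} one may assume the section is representable. I expect Step 3, specifically the uniform bound on the root-stack degree coming from $|W|$ and the passage from a tame torsor over $K$ to one induced from a torus, to be the genuinely hard part; everything else is henselian descent and the already-established structure theory of reductive groups over tame stacks.
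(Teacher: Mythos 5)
Your outline matches the paper's strategy at the coarsest level (first neutralize the gerbe, then extend the resulting torsor using a maximal torus and tameness), but two of your steps have genuine gaps. First, the neutralization in Step 1 is not something you can just ``expect'': it is a substantial half of the argument. The paper does it by using Giraud's bijection $\oH^2(\cR,Z)\to\oH^2(\cR,G)$, where $Z$ is the center, to replace the $G$-gerbe by a $Z$-gerbe; it then splits $Z$ into a finite tame part (handled by \cite{bresciani2024arithmetic}) and a torus part, and for the torus part it must prove that $\oH^1$ and $\oH^2$ of a torus over a root stack of a henselian DVR are torsion (\Cref{cor:Hitorsionrootstack}, which requires splitting the torus after a finite \'etale cover and building a trace map on the fppf site, \Cref{prop_there_is_fet_that_splits_the_torus} and \Cref{lemma_p_fet_induces_the_trace_map_on_fppf}), followed by a diagram chase with the multiplication-by-$d$ sequence (\Cref{prop_gerbe_banded_by_t_trivial_generically_is_trivial_up_to_further_root}). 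None of this is supplied or even identified in your proposal, and ``killing $\oH^2$ on $B\bmu_N$'' is not what happens --- the class is killed on a further root stack of $\spec R$, not on the residual gerbe.

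Second, Step 2 as stated would fail. The henselian lifting result \cite[Proposition 7.9]{AHRetalelocal} produces a section over $\cR$ extending a given section over the \emph{closed} residual gerbe $\cR_0=B\bmu_r$; it gives you no control whatsoever over the restriction of that section to the \emph{generic} point, which is exactly what the theorem demands. The paper uses henselian lifting only once, in \Cref{prop_there_is_a_max_torus}, to produce a maximal torus $T\subset G$ over $\cR$ (by lifting a $\bmu_r$-invariant maximal torus from the residue field, via \Cref{lemma_there_is_a_bmu_point}). The actual extension of the given $K$-torsor is then purely cohomological: by Serre's lemma the class in $\oH^1(K,G)$ lifts to $\oH^1(K,N)$ where $N$ is the normalizer of $T$, and the extension $1\to T\to N\to W\to 1$ reduces the problem to the tame finite group $W$ (Bresciani--Vistoli) and to a gerbe banded by a form of $T$ (the torus proposition again). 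Your Step 3 gestures at the right ingredients but misses the lift to the normalizer, which is the mechanism that makes tameness of $W$ usable; the Bruhat--Tits picture you invoke appears in the paper only as a remark, not as a proof.
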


\noindent The following example illustrates that some assumption on $G$ is necessary. 

\begin{example}\label{ex:tame_is_necessary} Let $R$ be a mixed characteristic $(0,2)$ henselian DVR with imperfect residue field $k$, for example the henselization of $\bZ[t]_{(2)}$, and let $t \in R$ be an element which is not a square in $k$. Consider the conic $C = \{x^2 + y^2 = tz^2\} \subset \mathbb{P}^2_K$. This can be interpreted as a map $\Spec K \to \cB \mathrm{PGL}_2$. We claim that this map does not extend to any root stack of $\spec R$. Suppose toward contradiction that it did extend over a root stack $\cR_n$. Then the pullback $C'$ of $C$ to $R' = R[2^{1/n}]$ has good reduction, where $R'$ has the same residue field $k$. Let $\cC' \to \spec R'$ be a smooth proper model. Then by smoothness $\cC'$ admits a $k''$-point where $k''/k$ is a separable extension. By the henselian assumption, this extends to an $R''$-point where $R''/R'$ the unique finite \'etale extension corresponding to $k''/k$. In particular, $C'' = C \times_R R''$ has a $K''$-rational point. This implies its discriminant must be a square. On the other hand, the discriminant of $x^2 + y^2 = tz^2$ is $8t$, so after possibly adjoining a further root of $2$, this implies that $t$ is a square in $K''$. Thus its reduction is a square in $k''$ which contradicts the fact that $k''/k$ is separable so $C$ cannot have good reduction over any root stack. 
\end{example}
The proof will proceed as follows. First we will prove that, up to replacing $\cR $ with a further root stack, the gerbe $\cG$ has to be trivial. This will be achieved by replacing $\cG$ with a gerbe over $\cR$ banded by the \textit{center} of $G$, which is an extension of a torus by a tame finite group. Using \cite{bresciani2024arithmetic} we will further reduce the problem to the case in which $G$ is a torus, which we handle explicitly. Once we know that $\cG$ is trivial, the problem will become to lift a class $c\in \oH^1(\eta,G|_\eta)$. If $G$ is special, this is automatic. In the case where $G$ has tame Weyl group, we will reduce again to the case of a torus by showing that $G$ admits a maximal torus $T$, lifting $c$ to a class in $\oH^1(\eta,N|_\eta)$ where $N$ is the normalizer of $T$ in $G$, and reducing again to the case of a torus using that $N$ is an extension of a torus by a finite tame group (so using \cite{bresciani2024arithmetic}). 
\subsection{Torus case}
In this section we include a few preparatory results that will be needed later. Whevever we take a cohomology group on an algebraic stack, we will use the flat-fppf cohomology.
\begin{lemma}\label{lemma_torsion_torus_cohom}Let $\cX$ be a tame regular algebraic stack with a dense open substack that is a scheme. Then $\oH^2(\cX,\Gm)$ is torsion and if $\cX$ admits a coarse moduli space $\pi: \cX\to X$, the cokernel of $\pi^*: \oH^1(X, \bG_m) \to \oH^1(\cX, \bG_m)$ is torsion.
\end{lemma}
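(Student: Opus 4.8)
The plan is to prove the two assertions in turn. Reducing to the case where $\cX$ is connected---so that, being regular, it is integral---write $\eta=\Spec K$ for its generic point; because $\cX$ contains a dense open substack that is a scheme, $\eta$ lies in that substack and its residual gerbe is genuinely $\Spec K$, where $K$ is the common function field of $\cX$, of that substack, and (when it exists) of the coarse space $X$.

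\emph{Torsionness of $\oH^2(\cX,\Gm)$.} Let $j\colon\eta\hookrightarrow\cX$ be the inclusion of the generic point. The first step is to establish, for the regular stack $\cX$, the divisor exact sequence
\[
0\longrightarrow\Gm\longrightarrow j_*\Gm_\eta\longrightarrow\bigoplus_W i_{W*}\bZ\longrightarrow 0
\]
on $\cX_{\fppf}$, with $W$ ranging over the codimension-one integral closed substacks of $\cX$, together with the vanishing $R^1j_*\Gm_\eta=0$. Both facts are classical for regular schemes---they reduce to the factoriality of regular local rings---and I would transport them to $\cX$ by pulling back along a smooth surjection $U\to\cX$ from a scheme, which is then regular, using flat base change for $Rj_*$ and the fact that such a pullback is conservative. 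Since each $W$ is a Noetherian algebraic stack we have $\oH^1(W,\bZ)=\Hom_{\mathrm{cont}}(\pi_1(W),\bZ)=0$, so the long exact sequence of the first display yields an injection $\oH^2(\cX,\Gm)\hookrightarrow\oH^2(\cX,j_*\Gm_\eta)$. The five-term exact sequence of the Leray spectral sequence for $j$, combined with $R^1j_*\Gm_\eta=0$, yields a further injection $\oH^2(\cX,j_*\Gm_\eta)\hookrightarrow\oH^2(\eta,\Gm)=\oH^2(\Spec K,\Gm)$. The last group is the Brauer group of the field $K$, hence torsion; composing, $\oH^2(\cX,\Gm)$ is torsion.

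\emph{The cokernel of $\pi^*$.} Here I would use the Leray spectral sequence for the coarse moduli map $\pi\colon\cX\to X$. Since $\pi_*\cO_\cX=\cO_X$, also $\pi_*\Gm=\Gm$, so the five-term sequence reads
\[
0\longrightarrow\oH^1(X,\Gm)\xrightarrow{\ \pi^*\ }\oH^1(\cX,\Gm)\longrightarrow\oH^0(X,R^1\pi_*\Gm)\longrightarrow\oH^2(X,\Gm),
\]
and it is enough to show that $R^1\pi_*\Gm$ is a torsion sheaf: then $\oH^0(X,R^1\pi_*\Gm)$ is torsion, hence so is $\mathrm{coker}(\pi^*)$. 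By the local structure theorem for tame stacks, \'etale-locally on $X$ one has $\cX\cong[\Spec B/G]$ with $G$ a finite linearly reductive group scheme and $\Spec B$ finite over the strictly henselian local ring of $X$; thus the stalk of $R^1\pi_*\Gm$ at a geometric point $\bar x$ is $\Pic([\Spec B/G])$ with $\Spec B$ a finite disjoint union of strictly henselian local schemes, so that $\Pic(\Spec B)=0$ and the Brauer group of $\Spec B$ vanishes. The spectral sequence for the torsor $\Spec B\to[\Spec B/G]$ then identifies $\Pic([\Spec B/G])$ with $\oH^1(G,B^*)$, which is annihilated by the order of the finite group scheme $G$. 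These orders are bounded since $\cX$ is Noetherian with finite inertia, so $R^1\pi_*\Gm$ is killed by a single integer $N$, and we are done.

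The step I expect to be the main obstacle is the first one: upgrading the purity input $R^1j_*\Gm_\eta=0$---equivalently, local factoriality---from regular schemes to the regular algebraic stack $\cX$. I anticipate this is routine via a smooth presentation, but it is exactly the point where regularity of $\cX$, rather than mere normality, is used: for a normal but non-regular $\cX$ the cokernel of $\Gm\to j_*\Gm_\eta$ is only a subsheaf of the Weil-divisor sheaf and may carry non-torsion sections. One should also fix the meaning of ``tame'' as ``finite inertia with linearly reductive stabilizers'', so that the local structure theorem is available in the second assertion.
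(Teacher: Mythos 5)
Your proof is essentially correct, but it takes a genuinely different route from the paper's on the first assertion. For the torsionness of $\oH^2(\cX,\Gm)$, the paper does not redo Grothendieck's purity argument on the stack: it instead shows that restriction $\oH^2(\cX,\Gm)\to\oH^2(U,\Gm)$ to the dense open subscheme $U$ is \emph{injective} by Lieblich's twisted-sheaf argument (a generically trivial $\Gm$-gerbe carries a $1$-twisted line bundle over $U$, whose reflexive extension to all of $\cX$ is again invertible by regularity), and then quotes the known torsionness of $\oH^2(U,\Gm)$ for the regular scheme $U$. Your approach instead transports the divisor sequence and the vanishing $R^1j_*\Gm_\eta=0$ to $\cX$ via a smooth presentation and injects directly into $\operatorname{Br}(K)$; this works and is self-contained, at the cost of having to set up Weil divisors and codimension-one points on the stack. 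For the second assertion the two arguments are the same fact in different packaging: the paper cites that $\cL^{\otimes N}$ descends to the coarse space for tame stacks, while you derive the global statement from the local one ($R^1\pi_*\Gm$ killed by a uniform $N$) via the Leray sequence and $\pi_*\Gm=\Gm$ (which, as you should note, uses that coarse spaces of tame stacks commute with base change).

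One slip to fix in the first part: the cokernel of $\Gm\to j_*\Gm_\eta$ is $\bigoplus_x (i_x)_*\bZ$ where $i_x$ is the inclusion of the \emph{residual gerbe} of a codimension-one point, not the pushforward from the integral closed substack $W=\overline{\{x\}}$; and the vanishing you need is $\oH^1(\cX,(i_x)_*\bZ)=0$, which follows from $\oH^1$ of the residual gerbe (a gerbe over a field for a finite group scheme, where $\Hom$ into $\bZ$ and continuous Galois homomorphisms to $\bZ$ both vanish). As stated, your claim $\oH^1(W,\bZ)=\Hom_{\mathrm{cont}}(\pi_1(W),\bZ)=0$ is false for non-normal $W$ (e.g. a nodal curve has nontrivial $\oH^1_{\mathrm{et}}(-,\bZ)$), and codimension-one integral closed substacks of a regular stack need not be normal. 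Similarly, for non-constant finite group schemes the annihilation of $\Pic([\Spec B/G])$ by $|G|$ is best justified by the pullback--norm composite along the degree-$|G|$ finite flat cover $\Spec B\to[\Spec B/G]$ rather than by bare group cohomology. Both are routine repairs that do not affect the architecture of your argument.
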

\begin{proof}
 We first prove that $\oH^2(\cX,\Gm)$ is torsion. It follows from the same argument as in \cite[Proposition 3.1.3.3]{lieblich2008twisted} that the morphism $\oH^2(\cX,\Gm)\to \oH^2(U,\Gm)$ is injective, where $U$ is a dense open substack which is a scheme (see also \cite[Prop. 2.23]{achenjang}). More specifically, consider a class $c\in \oH^2(\cX,\Gm)$ which restrict to the trivial class generically. This corresponds to a gerbe $\cG\to \cX$ which generically is trivial. So $\cG|_U$ has a 1-twisted line bundle $\cL_\eta$, and its push-forward via $\cG|_U\to \cG$ will be a 1-twisted line bundle. As $\oH^2(U,\Gm)$ is torsion from \cite[Corollary 3.1.3.4]{lieblich2008twisted}, also $\oH^2(\cX,\Gm)$ is torsion. To check that $\oH^1(\cX,\Gm)$ is torsion, it suffices to use \cite[Lemma 2.3.7]{abramovich2011stable} to argue that for every line bundle $\cL$ on $\cX$, a power $\cL^{\otimes N}$ descends to $X$. 
\end{proof}
\begin{proposition}\label{prop_there_is_fet_that_splits_the_torus}
    Let $R$ be a henselian DVR, let $\cR$ be a root stack of $\spec R$ and let $T\to \cR$ a smooth group stack with geometric fibers isomorphic to $\Gm^n$ for a fixed $n$. Then there is a finite \'etale morphism $\pi:\cR'\to \cR$ where $\cR'$ is also a root stack of a henselian DVR such that $\pi^{-1}T\cong \bG_m^n\times \cR'$.
\end{proposition}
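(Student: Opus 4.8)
The plan is to view the torus $T$ as a torsor under the constant group scheme $\underline{\GL_n(\Z)}=\underline{\Aut}_{\gp}(\Gm^n)$, trivialize that torsor over a connected finite \'etale cover of $\cR_r$, and then recognize the cover as a root stack of a henselian DVR.

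\smallskip

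\noindent\textbf{Setup.} Write $\cR_r=\sqrt[r]{\spec R}=[\spec R_{1/r}/\bmu_r]$, where $R_{1/r}=R[t^{1/r}]$ is the henselian DVR totally ramified of degree $r$ over $R$ with residue field $k$ and uniformizer $t^{1/r}$. In particular $\cR_r$ is connected, Noetherian, regular, and tame (a quotient of an affine scheme by the linearly reductive group $\bmu_r$). By \Cref{lemma_trivializing_the_torus_is_etale}, $P:=\uIsom_{\gp}(\Gm^n\times\cR_r,\,T)$ is a torsor under the constant group scheme $\underline{\GL_n(\Z)}$, \'etale over $\cR_r$, and a section of $P$ over a stack $\cR'\to\cR_r$ is precisely an isomorphism $T|_{\cR'}\cong\Gm^n\times\cR'$. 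So it suffices to trivialize $P$ over a finite \'etale cover $\cR'\to\cR_r$ that is again a root stack of a henselian DVR.

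\smallskip

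\noindent\textbf{Trivializing over a connected finite \'etale cover.} Since $\cR_r$ is a connected Noetherian tame stack, its finite \'etale covers form a Galois category with \'etale fundamental group $\Pi:=\pi_1(\cR_r,\bar x)$; as $\underline{\GL_n(\Z)}$ is the constant group scheme on a discrete group, the torsor $P$ corresponds to a continuous homomorphism $\rho\colon\Pi\to\GL_n(\Z)$. Continuity for the discrete topology on the target means $\ker\rho$ is open, equivalently $\rho$ has finite image. (One can instead deduce this by isotriviality of $T$ over the normal Noetherian stack $\cR_r$, reducing to the scheme case applied to the atlas $\spec R_{1/r}$ via \cite[Exp.~X]{SGA3}.) Let $\cR'\to\cR_r$ be the connected Galois cover corresponding to the open normal subgroup $\ker\rho$, with group $\mathrm{Im}(\rho)$. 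Then $\rho|_{\pi_1(\cR')}$ is trivial, so $P|_{\cR'}$ has a section and $T|_{\cR'}\cong\Gm^n\times\cR'$.

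\smallskip

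\noindent\textbf{Identifying $\cR'$ as a root stack of a henselian DVR.} The stack $\cR'$ is \'etale over $\cR_r$, hence regular, $1$-dimensional and tame; moreover it is an isomorphism over its generic point, since $\cR_r\to\spec R$ is an isomorphism over $\spec K$, so $\cR'$ restricted to the preimage of $\spec K$ is representable over the scheme $\spec K$ and hence is a scheme, necessarily $\spec K'$ for a finite separable extension $K'/K$. The composite $\cR'\to\cR_r\to\spec R$ is proper and quasi-finite, so the coarse moduli space $X'$ of $\cR'$ is finite over $\spec R$; it is connected and normal, hence $X'=\spec R'$ for a henselian DVR $R'$, finite over $R$, with fraction field $K'$. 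Finally, a regular $1$-dimensional tame algebraic stack over a trait which is an isomorphism over the generic point is the root stack of its coarse space at the closed point (see \cite[Prop.~3.12]{bresciani2024arithmetic} and the structure theory of tame stacky curves over a trait), so $\cR'=\sqrt[m']{\spec R'}$ for some $m'\ge 1$, as required.

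\smallskip

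\noindent\textbf{Main obstacle.} Everything becomes formal once $T$ is viewed as a $\underline{\GL_n(\Z)}$-torsor; the one step needing genuine input is the last one, recognizing the abstract connected finite \'etale cover $\cR'$ as a root stack of a henselian DVR, which rests on the fact that its coarse space is a henselian DVR together with the classification of regular tame stacky curves over a trait.
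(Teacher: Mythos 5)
Your reduction to trivializing the $\underline{\GL_n(\Z)}$-torsor $P=\uIsom_{\gp}(\Gm^n\times\cR, T)$ is the same starting point as the paper's (via \Cref{lemma_trivializing_the_torus_is_etale}), and your final step --- recognizing a connected finite \'etale cover of $\cR$ as a root stack of a henselian DVR via its coarse space --- is sound. The genuine gap is the middle step: the assertion that $P$ corresponds to a continuous homomorphism $\rho\colon\pi_1(\cR)\to\GL_n(\Z)$, equivalently that $P$ is trivialized by a \emph{finite} \'etale cover of the stack $\cR$. For a torsor under an infinite discrete group this identification is not formal; it is precisely the isotriviality statement being proved, and it fails, for instance, over non-normal schemes. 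Your fallback --- ``isotriviality over $\cR$, reducing to the scheme case applied to the atlas'' --- does not close the gap: splitting $T$ over a finite \'etale cover of the atlas $\spec R[t^{1/r}]$ says nothing about the $\bmu_r$-equivariant structure, i.e.\ about the induced homomorphism from the stabilizer $\bmu_r$ to $\GL_n(\Z)$ on the residual gerbe. That homomorphism can be nontrivial when $\bmu_r$ has a nontrivial \'etale part (e.g.\ $\bmu_2$ acting on $\Gm$ by inversion gives a torus over $\cB\bmu_2$ split by no cover pulled back from the base, only by $\spec k \to \cB\bmu_2$ itself), and handling it is the entire content of the paper's proof.

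Concretely, the paper writes $d=p^ed'$ with $p\nmid d'$, passes to the finite \'etale cover $\sqrt[p^e]{\spec R[\sqrt[d']{\pi}]}\to\cR$ (a $\bmu_{d'}$-torsor) to kill the tame part of the stabilizer, observes that the remaining infinitesimal $\bmu_{p^e}$ admits only the trivial homomorphism to the discrete group $\GL_n(\Z)$ so that $T$ is split on the residual gerbe over $k^s$, and then lifts a section of the Isom-sheaf from the residual gerbe to $\cR^{sh}$ using that the pair is henselian and the sheaf is \'etale \cite[Theorems 3.4 and 3.6]{AHRetalelocal}, finally spreading out to a finite \'etale extension of $R$. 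If you supplied an argument for your finite-image claim, it would essentially amount to redoing these steps; with that done, your Galois-theoretic packaging together with the abstract identification of the resulting cover as a root stack would give a correct, though less explicit, variant of the proof.
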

In other terms, we can split the torus after a finite \'etale morphism of the root stack $\cR$. This is well-known if $\cR$ is a scheme.
\begin{proof} Let $p$ be the residue characteristic of the closed point of $\spec R$, let $\bmu_d$ the stabilizer of the closed point of $\cR$ and $\pi$ the uniformizer of $R$. If $p>0$ let $e$ and $d'$ be such that $d=p^ed'$ and $p$ does not divide $d'$, so $\bmu_d\cong \bmu_{p^e}\times \bmu_{d'}$; if $p=0$ let $d=d'$. Observe now that $\spec R[t]/(t^d-\pi)]\cong \spec R[t_1,t_2]/(t_1^{p^e}-\pi, t_2^{d'}-t_1)$ and $\cR\cong [\spec R[t_1,t_2]/(t_1^{p^e}-\pi, t_2^{d'}-t_1)/\bmu_{d'}\times \bmu_{p^e}]$ where $\bmu_{d'}$ acts on $t_2$ and $\bmu_{p^e}$ on $t_1$. So we have a space $X$ (namely, $\spec R[t_1,t_2]/(t_1^{p^e}-\pi, t_2^{d'}-t_1)$) with an action of a group of the form $G=G_1\times G_2$ (namely, $G_1 = \bmu_{p^e}$ and $G_2=\bmu_{d'}$). We can factor $X\to [X/G_1\times G_2]$ as $X\to [X/G_1]\to [X/G_1\times G_2]$. So we can then factor the fppf $\bmu_d$-cover $\spec R[t]/(t^d-\pi)\to \cR$ as \[\spec R[t]/(t^d-\pi)\to \sqrt[p^e]{\spec R[\sqrt[d']{\pi}]}\xrightarrow{\xi} \cR.\]
Observe that $\xi$ is finite and \'etale as it is a $\bmu_{d'}$-torsor, so it suffices to prove the desired statement for $R[\sqrt[d']{\pi}]$, which is still a henselian DVR, and for $\cR = \sqrt[p^e]{\spec R[\sqrt[d']{\pi}]}$: we can assume $d=p^e$ and $p > 0$. 

Observe now that if $k$ is a separably closed field of characteristic $p$, and $T$ is a torus on $\cB \bmu_{p^e}$ over $\spec k$, then $T\cong \cB \bmu_{p^e}\times \Gm^n$ for some $n$. Indeed, as $T$ splits after the fppf-cover $\spec k\to \cB \bmu_{p^e}$, $T$ is given by a homomorphism $\bmu_{p^e}\to\textrm{Aut}(T)=\GL_n(\bZ)$. 
As $\GL_n(\bZ)$ is discrete over $\spec K$, the only homomorphism is the trivial one.

Consider then $R^{sh}$ the strict henselization of $R$, consider the morphisms $\spec R^{sh}\to \spec R$ and $\cR^{sh}:=\cR\times_{\spec R}\spec R^{sh} $. Over $\cR^{sh}$ one can consider the isom-sheaf $\cI^{sh}:=\underline{\Isom}_{\operatorname{grp}}(\Gm^n\times \cR^{sh},T^{sh})$ of \Cref{lemma_trivializing_the_torus_is_etale} where $T^{sh}=T|_{\cR^{sh}}$. Then there is a section $(\cB \bmu_{p^e})_{k^s}\to \cI$, where $k^s$ is the separable closure of the residue field of $R$. As $\cI^{sh}\to \cR^{sh}$ is \'etale, this section lifts uniquely to any infinitesimal thickening of $(\cB \bmu_{p^e})_{k^s}$ in $\cR^{sh}$, so from \cite[Theorem 3.4]{AHRetalelocal} it lifts to a section over $\cR^{sh}$. As $\cI^{sh}$ is the pull-back of $\underline{\Isom}_{\operatorname{grp}}(\Gm^n\times \cR,T)$ which is locally of finite type over $\cR$, and as $R^{sh}$ is the limit of finite and \'etale morphisms over $R$, there is a finite and \'etale morphism $\spec R' \to \spec R$ such that $T$ trivializes when pulled back to $\cR\times_{\spec R}\spec R'$; note this fiber product is a root stack over $\spec R'$.
\end{proof}
\begin{lemma}\label{lemma_p_fet_induces_the_trace_map_on_fppf}
    Let $p:\cY\to \cX$ be a finite and \'etale morphism of degree $d$ of algebraic stacks and $T\to \cY$ a smooth commutative scheme. Then $p$ induces a morphism from the flat-fppf topos of $\cY$ to the one of $\cX$, and there are morphisms $T\to p_*p^{-1}T$ and $p_*p^{-1}T\to T$ whose composition is the multiplication by $d$.
\end{lemma}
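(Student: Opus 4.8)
The plan is to produce the first map as the unit of an adjunction, the second as a trace (transfer) map, and then to verify the asserted composition fppf-locally on $\cX$.

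First I would record the morphism of topoi. Since $p$ is finite \'etale it is representable, affine, flat and of finite presentation; hence composition with $p$ carries a flat morphism $B\to\cY$ from a scheme to a flat morphism $B\to\cX$ and an fppf covering to an fppf covering, while base change along $p$ preserves both flatness and the property of being a scheme (as $p$ is affine). This yields a morphism of the flat-fppf topoi of $\cY$ and $\cX$, with inverse image $p^{-1}$ left adjoint to the direct image $p_*$, both additive. As $T$ is a smooth commutative group scheme, $p^{-1}T$ is represented by $T\times_{\cX}\cY$ on $\cY_{\fppf}$, and $p_*p^{-1}T$ is again an abelian sheaf on $\cX_{\fppf}$. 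The morphism $T\to p_*p^{-1}T$ is then simply the unit of the adjunction $p^{-1}\dashv p_*$.

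For the trace $p_*p^{-1}T\to T$ I would argue by descent. Choose a smooth cover $X\to\cX$ by a scheme; then $\cY\times_{\cX}X\to X$ is finite \'etale of degree $d$ over a scheme, hence becomes isomorphic to $\coprod_{i=1}^{d}X'$ after a suitable \'etale cover $X'\to X$. Over $X'$ the sheaf $p_*p^{-1}T$ is canonically identified with $T_{X'}^{\times d}$, and I take the trace to be the sum map $T_{X'}^{\times d}\to T_{X'}$. Any two such trivializations of $\cY\times_{\cX}X'\to X'$ differ, over connected components, by a permutation of the $d$ factors, under which the sum map is invariant; hence the local trace maps agree over $X'\times_{\cX}X'$ and descend to a morphism $\operatorname{tr}\colon p_*p^{-1}T\to T$ of abelian sheaves on $\cX_{\fppf}$. (Equivalently, $\operatorname{tr}$ is the counit of the adjunction $p_!\dashv p^{-1}$ composed with the canonical isomorphism $p_*\cong p_!$ that holds because $p$ is finite \'etale.)

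Finally, the composition $T\to p_*p^{-1}T\xrightarrow{\operatorname{tr}}T$ can be checked after pulling back along the fppf cover $X'\to\cX$, where it becomes the diagonal $T_{X'}\to T_{X'}^{\times d}$ followed by the sum map, i.e. multiplication by $d$; as both sides are sheaves on $\cX_{\fppf}$, the composition equals multiplication by $d$. The only step requiring any care is the gluing of the local trace maps, which is immediate from the permutation-invariance of the sum; I do not anticipate a genuine obstacle here.
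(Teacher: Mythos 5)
Your proof is correct, and your construction of the trace map takes a genuinely different route from the paper's. You build $\operatorname{tr}\colon p_*p^{-1}T\to T$ directly on the big fppf site by descent: split $\cY\times_{\cX}X'$ into $d$ copies of $X'$ over a suitable cover, identify $p_*p^{-1}T|_{X'}$ with $T^{\times d}$ (immediate from $(p_*p^{-1}T)(U)=T(U\times_{\cX}\cY)$ and $U\times_{\cX}\cY\cong\coprod^d U$), take the sum map, and glue via permutation-invariance. This is self-contained, automatically yields a morphism of sheaves, and notably never uses smoothness of $T$. The paper instead restricts everything to the small \'etale sites of schemes $U\to\cX$, where the trace map is already available from the Stacks project (via $f_!\cong f_*$ for finite \'etale $f$), and then must (a) compare big-fppf sections with small-\'etale ones — this is where they invoke smoothness of $T$ and Grothendieck's comparison theorem, which is really only indispensable for the higher-cohomology statement in the subsequent corollary — and (b) separately check that the adjunction maps and the isomorphism $f_!\cong f_*$ commute with base change, so that the pointwise maps assemble into a morphism of sheaves. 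Your descent argument absorbs (b) for free. The one point worth stating carefully is that the two pullbacks of your splitting to $X'\times_{\cX}X'$ differ by an automorphism of $\coprod^d(X'\times_{\cX}X')$ over $X'\times_{\cX}X'$, which is only Zariski-locally a permutation of the factors; since the sum map is invariant under every permutation, the cocycle condition still holds, exactly as you indicate.
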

Observe that this is again well-known for schemes, on the small \'etale site \cite[\href{https://stacks.math.columbia.edu/tag/03SH}{Tag 03SH}]{stacks-project}. We will need the corresponding statement for the fppf-site, in the case when $T$ is smooth; for doing so the key input is \cite[Theorem 11.7]{MR244270}. 
\begin{proof}
     As $p$ is finite and \'etale, from \cite[\href{https://stacks.math.columbia.edu/tag/0GR1}{Tag 0GR1}]{stacks-project} it induces a morphism of topoi. In particular, there are funcors $p_*:\cS ch(\cY_{\fppf})\to \cS ch(\cX_{\fppf})$ and $p^{-1}:\cS ch(\cX_{\fppf})\to \cS ch(\cY_{\fppf})$.
 The sheaf represented by $T\times_{\cX}\cY$ agrees with $p^{-1}T$. Moreover, there is an adjunction morphism $\operatorname{Id} \to p_*p^{-1}$ that gives a map $T\to p_*p^{-1}T$. We claim that, as $T$ is smooth, there is also a map $p_*p^{-1}T\to T$ such that the composition $T\to p_*p^{-1}T\to T$ is the multiplication by $d$. 

 To check this, it suffices to prove that for every flat morphism $U\to \cX$ from a separated scheme
 \begin{enumerate}
     \item there is a map $p_*p^{-1}T(U)\to T(U)$, such that if precomposed with $T(U)\to p_*p^{-1}T(U)$ gives the multiplication by $d$, and
     \item this map is functorial in $U$ (i.e. it is a map of sheaves).
 \end{enumerate}
 For (1), consider then the restriction morphisms $r_U\colon \cS ch(U_{\fppf})\to \cS ch(U_{\text{\'et}})$ and $r_V\colon \cS ch(V_{\fppf})\to \cS ch(V_{\text{\'et}})$ where $V=U\times_\cX\cY$,
 and the subscript \'et stands for the small
 \'etale site. Then:
 \begin{itemize}
     \item $T(U)=r_U(T|_U)(U)$. Indeed, we can pick an \'etale cover $\cU\to U$ where $\cU$ is a disjoint union of affines (so also $\cU\times_U\cU$ will affine), and from Grothendiek's theorem \cite[Theorem 11.7]{MR244270} $T(\cU)= r_UT(\cU)$ and $T(\cU\times_U\cU)= r_UT(\cU\times_U\cU)$ as $T$ is smooth. Then from the sheaf properties, the global sections $T(U)$ and $r_U(T|_U)(U)$ are the colimit of the same diagram, so $T(U)=r_U(T|_U)(U)$.
     \item As we can compute $p_*$, $p^{-1}$ and gobal sections in terms of limits of global sections of $T$ on affine schemes which are \'etale over either $V$ or $U$, proceeding as above $r_V(p^{-1}T|_U)$ and $ p^{-1}_{\text{\'et}}r_U(T|_U)$ are canonically isomorphic and similarly with $p_*$, where we denoted by $p_{\text{\'et}}$ the morphism between the small \'etale sites $V_{\text{\'et}}\to U_{\text{\'et}}$.
     \item For the small \'etale sites we have morphisms
 $r_U(T|_U)(U)\to ((p_{\text{\'et}})_*(p_{\text{\'et}})^{-1})r_U(T|_U)(U)\to r_U(T|_U)(U)$ from \cite[\href{https://stacks.math.columbia.edu/tag/03SH}{Tag 03SH}]{stacks-project}, so we also have morphisms $T(U)\to (p_*p^{-1}T) (U)\to T(U)$ whose composition is multiplication by $d$.
 \end{itemize}

 For part (2) instead, it suffices to check that if one has a cartesian diagram as follows
 \[
 \xymatrix{V' \ar[r]\ar[d]&V\ar[d]^f \\ U'\ar[r]&U}
 \]
with $V\to U$ finite and \'etale, then the trace map of \cite[\href{https://stacks.math.columbia.edu/tag/03SH}{Tag 03SH}]{stacks-project} commutes with base change. Recall that if $\cA$ is a sheaf of abelian groups on the small \'etale site of $U$, the trace map is defined in \cite[\href{https://stacks.math.columbia.edu/tag/03SH}{Tag 03SH}]{stacks-project} as the composition of 
\[\cA\to (f_{\text{\'et}})_*(f_{\text{\'et}})^{-1}\cA\cong (f_{\text{\'et}})_!(f_{\text{\'et}})^{-1}\cA\to \cA\]
where $(f_{\text{\'et}})_*, (f_{\text{\'et}})^{-1}$ and $(f_{\text{\'et}})_!$ are maps between the categories abelian sheaves on the small \'etale site of $U$ and $V$. The two arrows come from natural adjunctions, whereas the isomorphism comes from the map defined in \cite[\href{https://stacks.math.columbia.edu/tag/0F4L}{Tag 0F4L}]{stacks-project}, which is an isomorphism for $f$ finite and \'etale \cite[\href{https://stacks.math.columbia.edu/tag/03S7}{Tag 03S7}]{stacks-project}. Then to check the desired statement, it suffices to observe that both the adjunction maps $r_U(T_U)\to (f_{\text{\'et}})_*(f_{\text{\'et}})^{-1}r_U(T_U)$ and $(f_{\text{\'et}})_!(f_{\text{\'et}})^{-1}r_U(T_U)\to r_U(T_U)$, and the isomorphism $(f_{\text{\'et}})_*(f_{\text{\'et}})^{-1}r_U(T_U)\cong (f_{\text{\'et}})_!(f_{\text{\'et}})^{-1}r_U(T_U)$ commute with base change.

As $f$ is \'etale, the first map $\cA\to (f_{\text{\'et}})_*(f_{\text{\'et}})^{-1}\cA$ evaluated at an \'etale morphism $W\to U$ agrees with the restriction \[\cA(W)\to \cA(W\times_UV).\] The second map instead, up to shrinking $W$, agrees with the addition map \[\bigoplus_{W\to V\text{ over }U}(f_{\text{\'et}})^{-1}\cA(W) = \bigoplus_{W\to V\text{ over }U}\cA(W)\to \cA(W)\]
as $(f_{\text{\'et}})_!$ is the sheafification of $(f_{\text{\'et}})_{p!}$ \cite[\href{https://stacks.math.columbia.edu/tag/03S2}{Tag 03S2}]{stacks-project}.
These commute with base change. Similarly the isomorphism $\cA(W\times_UV)\to \bigoplus_{W\to V\text{ over }U}\cA(W)$ defined in \cite[\href{https://stacks.math.columbia.edu/tag/0F4L}{Tag 0F4L}]{stacks-project}, where we shrank $W$ so that there are $\deg(f)$ sections of $W\to V$ over $U$, again commutes with base change.
\end{proof}
\begin{corollary}\label{cor:Hitorsionrootstack}
    Let $R$ be a henselian DVR , let $\cR$ be a root stack of $\spec R$ at its closed point and $T\to \cR$ a torus. Then $\oH^i(\cR,T)$ is torsion for $i=1,2$.
\end{corollary}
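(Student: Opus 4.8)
The plan is to split the torus over a finite \'etale cover of $\cR$ where $\bG_m$-cohomology is controlled by \Cref{lemma_torsion_torus_cohom}, and then push back down via the trace map of \Cref{lemma_p_fet_induces_the_trace_map_on_fppf}. Concretely, the first step is to invoke \Cref{prop_there_is_fet_that_splits_the_torus}: since $\cR$ is connected, $T$ has constant rank $n$, so there is a finite \'etale morphism $\pi\colon\cR'\to\cR$, of some degree $d$, with $\cR'$ the root stack of a henselian DVR $R'$, such that $\pi^{-1}T\cong\bG_{m,\cR'}^{\,n}$.

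Next I would check that $\oH^i(\cR',\bG_m)$ is torsion for $i=1,2$. The stack $\cR'$ is a regular tame algebraic stack (a root stack of a regular scheme along a regular Cartier divisor, with inertia $\bmu_m$, which is linearly reductive), it contains $\Spec\Frac(R')$ as a dense open subscheme, and its coarse moduli space is $\Spec R'$; hence \Cref{lemma_torsion_torus_cohom} applies. It gives that $\oH^2(\cR',\bG_m)$ is torsion and that the cokernel of $\oH^1(\Spec R',\bG_m)\to\oH^1(\cR',\bG_m)$ is torsion, and since $R'$ is local, $\oH^1(\Spec R',\bG_m)=\Pic(\Spec R')=0$, so $\oH^1(\cR',\bG_m)$ is torsion as well. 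As cohomology commutes with finite products, $\oH^i(\cR',\pi^{-1}T)\cong\oH^i(\cR',\bG_m)^{\oplus n}$ is then torsion for $i=1,2$.

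Finally I would transfer back to $\cR$. By \Cref{lemma_p_fet_induces_the_trace_map_on_fppf} there are maps of fppf sheaves $T\to\pi_*\pi^{-1}T\to T$ on $\cR$ whose composite is multiplication by $d$; since $\pi$ is finite one has $R^q\pi_*=0$ for $q>0$, so the Leray spectral sequence identifies $\oH^i(\cR,\pi_*\pi^{-1}T)$ with $\oH^i(\cR',\pi^{-1}T)$, and applying $\oH^i(\cR,-)$ yields homomorphisms
\[\oH^i(\cR,T)\xrightarrow{\ \pi^*\ }\oH^i(\cR',\pi^{-1}T)\xrightarrow{\ \mathrm{tr}\ }\oH^i(\cR,T)\]
composing to multiplication by $d$. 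Given $\alpha\in\oH^i(\cR,T)$, the class $\pi^*\alpha$ lies in a torsion group, say $N\pi^*\alpha=0$, so $dN\alpha=\mathrm{tr}(N\pi^*\alpha)=0$; hence $\oH^i(\cR,T)$ is torsion, as claimed.

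The step I expect to require the most care is verifying that $\cR'$ genuinely falls within the scope of \Cref{lemma_torsion_torus_cohom} — in particular regularity of the root stack — and that the sheaf-level trace of \Cref{lemma_p_fet_induces_the_trace_map_on_fppf} induces the stated map on fppf cohomology, which rests on $\pi$ finite \'etale having vanishing higher direct images so that $\oH^i(\cR,\pi_*\pi^{-1}T)=\oH^i(\cR',\pi^{-1}T)$.
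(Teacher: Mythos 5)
Your proposal is correct and follows essentially the same route as the paper: split $T$ via \Cref{prop_there_is_fet_that_splits_the_torus}, apply \Cref{lemma_torsion_torus_cohom} over the cover, and descend with the trace maps of \Cref{lemma_p_fet_induces_the_trace_map_on_fppf} together with the Leray identification $\oH^i(\cR,\pi_*\pi^{-1}T)\cong\oH^i(\cR',\pi^{-1}T)$. The one point you state a bit glibly --- ``since $\pi$ is finite one has $R^q\pi_*=0$ for $q>0$'' --- is not automatic in the fppf topology for finite morphisms; the paper's proof justifies it by using smoothness of $\pi^{-1}T$ to compare fppf with small-\'etale cohomology (Grothendieck's theorem) and then invoking the vanishing of higher direct images for finite morphisms on the \'etale site, which is exactly the care you correctly flagged as needed.
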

\begin{proof}
 From \Cref{prop_there_is_fet_that_splits_the_torus} there is a finite \'etale morphism $p:\cR'\to \cR$ of degree $d$ such that $T|_{\cR'}\cong \bG_m^n$ for some $n$, with $\cR'$ the root stack of a DVR.
 From \Cref{lemma_p_fet_induces_the_trace_map_on_fppf} the map $$T\to p_*p^{-1}T\to T$$ of abelian fppf-sheaves of $\cX$
is the multiplication by $d$, for $i=1,2$. 
So there are maps whose composition is multiplication by $d$ \[
\oH^{i}(\cR,T)\to \oH^{i}(\cR,p_*p^{-1}T)\to \oH^{i}(\cR,T).
\] It suffices to check that  that $\oH^{i}(\cR,p_*p^{-1}T)\cong \oH^{i}(\cR',p^{-1}T)$. Indeed, $\cR'$ is such that $p^{-1}T=\bG_m^n$ and $\oH^{i}(\cR',\bG_m^n)$ is torsion by \Cref{lemma_torsion_torus_cohom}.
The equality $\oH^{i}(\cR,p_*p^{-1}T)\cong \oH^{i}(\cR',p^{-1}T)$ follows from the Leray spectral sequence, once we prove that $R^ip_*p^{-1}T=0$ for $i>0$; we will now prove this.

From \cite[\href{https://stacks.math.columbia.edu/tag/0GR2}{Tag 0GR2}]{stacks-project} the sheaf $R^ip_*p^{-1}T$ is the sheafification of the presheaf that sends $U\to \cR$ to $\oH^i(U\times_\cR\cR',p^{-1}T)$. In particular to check that $R^ip_*p^{-1}T=0$ it suffices to check that every separated scheme $U\to \cR$ admits an \'
etale cover $\cU\to U$ in affine schemes such that 
$\oH^i(\cU\times_\cR\cR',p^{-1}T)=0$. Let us denote by $V:=\cR'\times_\cR U$ with the second projection $f$. As $f$ is finite, $R^i(f_{\text{\'et}})_*(p^{-1}T)|_{V_{\text{\'et}}}=0$ from \cite[\href{https://stacks.math.columbia.edu/tag/0A4K}{Tag 0A4K}]{stacks-project}, where the subscript \'et stands for the restriction of $f$ and $p^{-1}T$ to the small \'etale sites of $V$ and $U$. 
Then we can find an \'etale cover $\cU\to U$ of $U$ in affine schemes such that $\oH^i_{\text{\'et}}(\cU\times_U V,p^{-1}T|_{V_{\text{\'et}}})=0$.
The vainshing of $R^ip_*p^{-1}T$ follows from Grothendieck's theorem \cite[Theorem 11.7]{MR244270}: \'etale cohomology of affine schemes with value in $p^{-1}T$ (which is smooth) agrees with fppf, so $\oH^i_{\text{\'et}}(\cU\times_U V,p^{-1}T|_{V_{\text{\'et}}}) = \oH^i(\cU\times_U V,p^{-1}T)$. Then $R^ip_*p^{-1}T=0$ as every scheme $U\to \cR$ admits a cover $\cU\to U$ where $R^ip_*p^{-1}T(\cU)=0$, and $R^ip_*p^{-1}T$ is a sheaf. 
\end{proof}
    
\begin{lemma}\label{lemma_case_BT}
    Let $\cR$ be the root stack of the spectrum of a DVR $R$, let $\eta$ be its generic point and let $T\to \cR$ be a torus. Assume we are given $x_\eta\in \oH^1(\eta,T|_\eta)$. Then, up to replacing $\cR$ with a further root stack, we can extend $x_\eta$ to $x\in \oH^1(\cR,T)$. 
\end{lemma}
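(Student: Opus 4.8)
The plan is to reduce, by a Kummer sequence, to extending a torsor under a finite group scheme, and then invoke the root stack valuative criterion for tame stacks of \cite{bresciani2024arithmetic}. The first observation I would record is that $x_\eta$ has finite order: the torus $T|_\eta$ splits over a finite Galois extension $L/K$, with group $\Gamma = \mathrm{Gal}(L/K)$, and $\oH^1(L,T|_\eta) \cong \oH^1(L,\Gm^{\oplus n}) = 0$ by Hilbert's Theorem 90, so inflation--restriction gives $\oH^1(\eta,T|_\eta) \cong \oH^1(\Gamma, T|_\eta(L))$, a group annihilated by $|\Gamma|$. Let $d$ be the order of $x_\eta$.

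Next I would consider the Kummer sequence $1 \to T[d] \to T \xrightarrow{\cdot d} T \to 1$ of fppf sheaves on $\cR$. The kernel $T[d]$, being the kernel of multiplication by $d$ on a torus, is a finite flat group scheme of multiplicative type over $\cR$, hence linearly reductive in every characteristic. Since $d\cdot x_\eta = 0$, the long exact sequence shows that $x_\eta$ is the image of a class $\tilde x_\eta \in \oH^1(\eta, T[d]|_\eta)$, that is, of a morphism $\eta \to \cG$ over $\cR$, where $\cG := B_\cR(T[d])$ is the classifying stack. As a gerbe over $\cR$ banded by a finite linearly reductive group scheme, $\cG$ has finite inertia with linearly reductive geometric stabilizers, so it is a tame algebraic stack; and, being a gerbe over $\cR$, its coarse moduli space coincides with that of $\cR$, namely $\spec R$. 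Hence the root stack valuative criterion for tame stacks \cite{bresciani2024arithmetic} applies to the square given by $\eta \to \cG$ and $\spec R \xrightarrow{\mathrm{id}} \spec R$, and produces an extension $\tilde\cR \to \cG$ with $\tilde\cR = \sqrt[N]{\spec R}$ for some $N$. After enlarging $N$ so that it is divisible by the root index of $\cR$ over $\spec R$, the composite $\tilde\cR \to \cG \to \cR$ exhibits $\tilde\cR$ as a further root stack of $\cR$ (using \cite[Proposition 3.12]{bresciani2024arithmetic}). Finally, extending the structure group of the $T[d]$-torsor classified by $\tilde\cR \to \cG$ along $T[d] \hookrightarrow T$ yields a $T$-torsor on $\tilde\cR$ whose restriction to $\eta$ is the image of $\tilde x_\eta$ in $\oH^1(\eta, T|_\eta)$, namely $x_\eta$; this is the desired extension.

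The step I expect to require the most care is the application of \cite{bresciani2024arithmetic}: one must check that it is available in exactly this setting — for the tame, but possibly non-Deligne--Mumford, stack $\cG = B_\cR(T[d])$ over the DVR $\spec R$, with coarse space $\spec R$ — and that the resulting root stack of $\spec R$ can indeed be taken over $\cR$ compatibly. The input that makes the whole reduction possible is the finiteness of the order of $x_\eta$, which is what lets us trade the torus $T$ for the finite linearly reductive group scheme $T[d]$; an alternative, more hands-on route (closer to the computation in \Cref{ex:conic_tame}) would be to construct the extension directly from a torsion line bundle on a finite cover splitting $T$ together with the tautological section of the root stack, but the argument above is cleaner.
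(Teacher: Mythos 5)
Your proposal is correct and follows essentially the same route as the paper: establish that $x_\eta$ is torsion, lift it along the Kummer sequence $1\to T[d]\to T\to T\to 1$ to a class for the finite linearly reductive (tame) group scheme $T[d]$, extend the resulting map $\eta\to \cB T[d]$ over a further root stack using \cite{bresciani2024arithmetic}, and push forward along $T[d]\hookrightarrow T$. The only cosmetic difference is that you justify torsion-ness of $\oH^1(\eta,T|_\eta)$ directly via Hilbert 90 and inflation--restriction, and you spell out the compatibility of the resulting root stack with $\cR$, both of which the paper treats more tersely.
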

\begin{proof}
First observe that $\oH^1(\eta,T|_\eta)$ is torsion by Lemma \ref{lemma_torsion_torus_cohom}.  In the fppf topology of $\cR$, we have the following exact sequence:
\[ 1\to F\to T\xrightarrow{\cdot d}T\to 1 
\] where $d$ is the order of $x_\eta$. As $T$ is a torus, $F$ is tame. Then there is a finite tame group $F$ over $\cR$ and an element $c_\eta\in \oH^1(\eta,F|_\eta)$ that maps to $x_\eta$. It suffices to extend $c_\eta$ to $c\in \oH^1(\cR,F)$. Observe that the element $c_\eta$ corresponds to a morphism $\eta \to \cB F$. As $F$ is tame, from \cite{bresciani2024arithmetic} we can extend this morphism to a map $\cR\to \cB F$, up to replacing $\cR$ with a further root stack. This corresponds to an element $c_F\in \oH^1(\cR,F)$ and its image in $\oH^1(\cR,T)$ is the desired extension $x$.
\end{proof}
\begin{proposition}\label{prop_gerbe_banded_by_t_trivial_generically_is_trivial_up_to_further_root}Let $\cR$ be the root stack of a henselian DVR $R$.
    Let $T\to \cR$ be a torus, and $\cT\to \cR$ a gerbe banded by $T$ which is generically trivial. Then, up to replacing $\cR$ with a further root stack, $\cT$ is trivial.
\end{proposition}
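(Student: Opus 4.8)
The strategy is to run the argument of \Cref{lemma_case_BT} one cohomological degree higher. Since $T$ is abelian, the gerbe $\cT\to\cR$ is classified by a class $c\in\oH^2(\cR,T)$ (flat--fppf cohomology), and generic triviality says exactly that $c|_\eta=0$ in $\oH^2(\eta,T|_\eta)$. By \Cref{cor:Hitorsionrootstack} the class $c$ has finite order, say $d$. First I would introduce the Kummer-type exact sequence of fppf sheaves on $\cR$
$$
1\to F\to T\xrightarrow{\;\cdot d\;}T\to 1,\qquad F:=T[d],
$$
where $F$ is a finite group scheme of multiplicative type, hence tame, exactly as in the proof of \Cref{lemma_case_BT}. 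Since $dc=0$, the segment $\oH^2(\cR,F)\xrightarrow{\iota_*}\oH^2(\cR,T)\xrightarrow{\cdot d}\oH^2(\cR,T)$ of the long exact sequence produces a class $\tilde c\in\oH^2(\cR,F)$ with $\iota_*\tilde c=c$, where $\iota:F\hookrightarrow T$ is the inclusion.

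Next I would arrange that $\tilde c$ is itself generically trivial. Restricting the long exact sequence to $\eta$ and using $\iota_*(\tilde c|_\eta)=c|_\eta=0$, exactness gives $\tilde c|_\eta=\partial(b_\eta)$ for some $b_\eta\in\oH^1(\eta,T|_\eta)$, where $\partial$ is the connecting map. By \Cref{lemma_case_BT}, after replacing $\cR$ by a further root stack we may extend $b_\eta$ to a class $b\in\oH^1(\cR,T)$. Replacing $\tilde c$ by $\tilde c-\partial(b)$ leaves $\iota_*\tilde c=c$ unchanged (as $\iota_*\circ\partial=0$), while now $\tilde c|_\eta=0$.

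Finally, let $\cF\to\cR$ be the $F$-gerbe with class $\tilde c$. Because $\tilde c|_\eta=0$, the stack $\cF$ carries a section over $\eta$, i.e.\ a morphism $\spec K\to\cF$. Since $F$ is finite and tame and $\cR$ is a tame stack proper over $\spec R$ with coarse space $\spec R$, the stack $\cF$ is a tame Artin stack, proper over $\spec R$, with coarse space $\spec R$. Hence the tame root stack valuative criterion of \cite{bresciani2024arithmetic} applies and extends the $K$-point of $\cF$ to a morphism $\cR''\to\cF$ from a further root stack $\cR''$ of $\spec R$ dominating $\cR$; restricting to $\cR''$ shows $\tilde c|_{\cR''}=0$, hence $c|_{\cR''}=\iota_*(\tilde c|_{\cR''})=0$, so $\cT$ is trivial over $\cR''$.

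The two things that need care are: (i) that $F=T[d]$ is genuinely tame, which holds because it is of multiplicative type (its Cartier dual $X^*(T)/d\,X^*(T)$ is finite \'etale), and this is precisely what licenses the appeals to \Cref{lemma_case_BT} and to \cite{bresciani2024arithmetic}; and (ii) the bookkeeping that the successive root stacks produced --- first by \Cref{lemma_case_BT}, then by \cite{bresciani2024arithmetic} --- can be taken to lie over a single common further root stack of $\spec R$, so that $\tilde c$ and $c$ may be compared there. I do not expect either to cause serious trouble: the real content has already been isolated in \Cref{cor:Hitorsionrootstack}, \Cref{lemma_case_BT}, and the tame case of \cite{bresciani2024arithmetic}, and this proposition is a short deduction from them.
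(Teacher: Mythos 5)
Your proposal is correct and follows essentially the same route as the paper's proof: torsionness of $c$ from \Cref{cor:Hitorsionrootstack}, the multiplication-by-$d$ Kummer sequence to lift $c$ to an $F$-class, adjusting that lift by the connecting image of an $\oH^1(\eta,T)$-class extended via \Cref{lemma_case_BT}, and then trivializing the resulting generically trivial tame $F$-gerbe by \cite{bresciani2024arithmetic}. The two points you flag for care (tameness of $T[d]$ and compatibility of the successive root stacks) are handled implicitly in the paper in the same way.
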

\begin{proof}
    Let $c\in \oH^2(\cR,T)$ be the class representing $\cT$. First observe that by Corollary \ref{cor:Hitorsionrootstack}, the class $c$ is torsion.
Let $d$ be the order of $c$, and consider the exact sequence in the fppf-topology
    \[
    1\to F\to T\xrightarrow{\cdot d}T\to 1.
    \]
    As $d$ is the order of $c$, there is $c_F\in \oH^2(\cR,F)$ which maps to $c$ via $\beta:\oH^2(\cR,F)\to \oH^2(\cR,T)$. 
    Consider then the following commutative square, where $\eta$ is the generic point of $\cR$:
    \[
\xymatrix{\oH^1(\cR,T)\ar[r]^\alpha\ar[d] & \oH^2(\cR,F)\ar[r]^\beta\ar[d] & \oH^2(\cR,T)\ar[d]\\\oH^1(\eta,T)\ar[r]^{\alpha|_\eta} & \oH^2(\eta,F)\ar[r]^{\beta|_\eta} & \oH^2(\eta,T).}
    \]
    Since $\beta|_\eta( (c_F)|_\eta)=0$, there is an element $x_\eta\in \oH^1(\eta,T)$ that maps to $(c_F)|_\eta$. From Lemma \ref{lemma_case_BT}, up to replacing $\cR$ with a further root stack,
    we can extend $x_\eta$ to $x\in \oH^1(\cR,T)$. In particular, up to replacing $c_F$ with $c_F - \alpha(x)$, we can assume that $(c_F)|_\eta$ is trivial. So $\cF$, the $F$-gerbe over $\cR$ corresponding to $c_F$, has a section over $\eta$. Thus by \cite{bresciani2024arithmetic} it has a section after replacing $\cR$ with a further root stack. In other terms, $c_F=0$ after passing to a further root stack. Thus also $c=0$.
\end{proof}

\subsection{Proof of Theorem \ref{thm_case_of_gerbe_over_root_stack}}
We are finally ready to prove Theorem \ref{thm_case_of_gerbe_over_root_stack}; we begin with the following auxiliary statement.
\begin{proposition}\label{prop_case_BG}
    Let $R$ be a henselian DVR, let $G\to \cR$ be a reductive group which is either special or has tame Weyl group (as in Definition \ref{def_tame}) where $\cR$ is a root stack of $\spec R$ at its closed point. Let $\eta$ be the generic point of $\cR$. Then any section of $(\cB G)_\eta\to \eta$ can be extended to a section of $\cR\to \cB G$ up to replacing $\cR$ with a further root stack. 
\end{proposition}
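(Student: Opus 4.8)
The plan is to split into the two cases in the hypothesis on $G$. If $G$ is special, then every $G$-torsor over any scheme is Zariski-locally trivial; in particular the given section $\eta \to (\cB G)_\eta$ corresponds to a $G|_\eta$-torsor $P_\eta$ on $\eta = \spec K$, and since $K$ is a field this torsor is automatically trivial. Hence the section factors through the canonical $\eta \to \cB G|_\eta$ coming from the identity torsor, which obviously extends to the canonical section $\cR \to \cB G$, with no root stack needed. So the special case is immediate and all the content is in the tame Weyl group case.

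For the tame case, the idea is to reduce to the torus case handled in \Cref{lemma_case_BT} by passing to the normalizer of a maximal torus. First, by \Cref{prop_there_is_a_max_torus} (applicable since $R$ is henselian and $G \to \cR$ is reductive with connected fibers), there is a torus $T \subseteq G$ over $\cR$ which is fiberwise a maximal torus. Let $N = N_G(T)$ be its normalizer, so that we have an exact sequence $1 \to T \to N \to W \to 1$ where $W$ is the Weyl group scheme, a finite group scheme over $\cR$ which is tame by the tame Weyl group hypothesis (its order is coprime to the residue characteristic). The key input I would invoke is that the natural map $\oH^1(\eta, N|_\eta) \to \oH^1(\eta, G|_\eta)$ is surjective: over a field, every $G$-torsor has a reduction of structure group to the normalizer of a maximal torus. (Over a general field this is the statement that $G/N$ has a rational point after twisting — which holds because $G/N$, being the variety of maximal tori, always has a rational point, as a reductive group over a field always contains a maximal torus defined over that field. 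Applying this to the inner twist of $G$ by the given torsor gives the reduction.) So the given class $c_\eta \in \oH^1(\eta, G|_\eta)$ lifts to some $\tilde c_\eta \in \oH^1(\eta, N|_\eta)$.

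Now I would push $\tilde c_\eta$ forward along $N \to W$ to get a class in $\oH^1(\eta, W|_\eta)$, i.e.\ a map $\eta \to \cB W$; since $W$ is tame, by \cite{bresciani2024arithmetic} this extends to $\cR' \to \cB W$ after replacing $\cR$ by a further root stack $\cR'$. This extension picks out a $W$-torsor $Q$ on $\cR'$, and I would then consider the twisted form $N^Q = {}^Q N$ (equivalently work with the gerbe/torsor over the $N$-version); lifting $\tilde c_\eta$ through $1 \to T^Q \to N^Q \to W \to 1$ over $\eta$, the obstruction to extending lives in $\oH^2$ of a twisted torus over $\cR'$, and after twisting by the extended $Q$ the remaining data is a $T^Q$-torsor over $\eta$ extending compatibly; this is exactly the input to \Cref{lemma_case_BT}, which extends it over a further root stack. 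Finally, pushing this extended $N$-torsor (or its twisted-form analogue) forward along $N \to G$ yields a $G$-torsor over the resulting root stack restricting to $c_\eta$, i.e.\ the desired section of $\cR'' \to \cB G$. The main obstacle I expect is the bookkeeping around the twisted forms: one must carefully track how the $W$-torsor $Q$ twists $T$ and $N$, and verify that the relevant $\oH^1$ and $\oH^2$ torsion/extension statements (\Cref{lemma_case_BT}, \Cref{cor:Hitorsionrootstack}) apply verbatim to the twisted torus $T^Q$ rather than just to $T$ itself — which they should, since $T^Q$ is again a torus over a root stack of a henselian DVR.
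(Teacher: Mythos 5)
Your proposal is correct and follows essentially the same route as the paper: trivial in the special case, and in the tame case a reduction to the normalizer $N$ of a maximal torus (furnished by \Cref{prop_there_is_a_max_torus}), extension of the induced $W$-torsor via the tame case of \cite{bresciani2024arithmetic}, and then treatment of the remaining (twisted) torus gerbe/torsor via \Cref{prop_gerbe_banded_by_t_trivial_generically_is_trivial_up_to_further_root} and \Cref{lemma_case_BT}. The twisting bookkeeping you flag is handled in the paper by observing that $\cB N\times_{\cB W}\cR$ is a gerbe banded by a form of $T$, which is still a torus over a root stack of a henselian DVR, so those lemmas apply verbatim.
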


\begin{proof} In the special case, there is nothing to prove as $\eta \to \cB G$ classifies the trivial torsor which extends to the trivial torsor over $\cR$. Thus suppose $G$ is reductive with tame Weyl group. Let $T$ be a maximal torus of $G$, which exists by Proposition \ref{prop_there_is_a_max_torus} and let $N$ be the normalizer of $T$ in $G$. The inclusion $N\to G$ induces a map $\oH^1(\cdot, N)\to \oH^1(\cdot, G)$. By \cite[Lemma III.2.2.1 (b)]{serre1979galois} the class $c_\eta$ lifts to an element $d_\eta\in \oH^1(\eta, N)$, as all maximal tori of a reductive group over an algebraically closed field are conjugate.
In particular, it suffices to check that the map $d_\eta:\eta\to \cB N_\eta$ lifts to a $d:\cR\to \cB N$, up to replacing $\cR$ with a further root stack.
Recall that the normalizer of a maximal torus $T$ is an extension of $T$ by the Weyl group which by assumption is tame over $\cR$:
\[
1\to T \to N \to  W\to 1.
\]In particular, we can factor $\cB N\to \cR$ as $\cB N\xrightarrow{\alpha} \cB W \to \cR$. By \cite{bresciani2024arithmetic}, there is a lift of $\alpha\circ d_\eta$ up to replacing $\cR$ with a further root stack, so we have a morphism $\cR\to \cB W$ extending $\alpha\circ d_\eta$. We need now to lift it to $\cB N\times_{\cB W}\cR$, but $\cB N\times_{\cB W}\cR\to \cR$ is a gerbe for a form of the torus $T$ by \cite[\href{https://stacks.math.columbia.edu/tag/0CJY}{Tag 0CJY}]{stacks-project}. Then Proposition \ref{prop_gerbe_banded_by_t_trivial_generically_is_trivial_up_to_further_root} applies so that after replacing $\cR$ by a further root stack, we can extend $\eta \to \cB N \times_{\cB W} \cR$ to a section $\cR \to \cB N \times_{\cB W} \cR$ as required.
\end{proof}

\begin{remark}\label{rem:bounding} In the special case of a gerbe over $R$ itself rather than a root stack of $\spec R$, the order of the root stack needed to extend the $K$-point of $\cB G$ can be bounded uniformly in terms of the Weyl group of $G$. Indeed by work of Chernousov, Gille and Reichstein \cite{chernousov2006resolving, chernousov2008reduction} there exists a finite subgroup scheme $S \subset G$ which is an extension of the Weyl group of $G$ by a diagonalizable group such that $\oH^1(K,S) \to \oH^1(K,G)$ is surjective. Thus we can lift the $K$-point of $\cB G$ to $\cB S$. By the tameness assumption on the Weyl group of $G$, the stack $\cB S$ is tame so we may apply \cite{bresciani2024arithmetic} to this stack and then the degree of the root stack necessary is bounded by the order of $S$. 
    
\end{remark}

We are now ready to prove Theorem \ref{thm_case_of_gerbe_over_root_stack}.

\begin{proof}[Proof of Theorem \ref{thm_case_of_gerbe_over_root_stack}]
From Proposition \ref{prop_case_BG}, it suffices to show that, up to replacing $\cR$ with a further root stack, we have that $\cG \cong \cB G$. Since $\cG$ is banded by $G$, it corresponds to a class $c\in \oH^2(\cR,G)$. Let $Z$ be the center of $G$ (which exists by \cite[Lemma 2.2.4]{ConradReductive}). Observe that $Z$ is linearly reductive as $Z\subseteq C(T)$ where $C(T)$ is the centralizer of a maximal torus $T$ of $G$ (which again exists by \cite[Lemma 2.2.4]{ConradReductive}), and $C(T)=T$ as it is true on geometric fibers and both sides commute with base change by \cite[Theorem 3.3.4]{ConradReductive}. It follows from \cite[Thm. IV.3.3.3]{giraud} (see also the proof of \cite[Proposition 2.1]{bejleri2024proper}) that the map $\oH^2(\cR,Z)\to \oH^2(\cR,G)$ and the map $\oH^2(\eta,Z)\to \oH^2(\eta,G)$ defined by sending a gerbe $\cZ$ to $(\cZ\times \cB G)\thickslash Z$ are bijections, where $(\cZ\times \cB G)\thickslash Z$ is the rigidification of $\cZ\times \cB G$ by the diagonal subgroup $Z$ of the inertia stack $\cI (\cZ \times \cB G) = \cI \cZ \times \cI \cB G$.
In particular, it suffices to show that if $Z$ is a commutative linearly reductive group over $\cR$, and $\cZ\to \cR$ is a gerbe banded by $Z$ which is generically trivial, it can be trivialized up to replacing $\cR$ with a further root stack.
We can further rigidify by the smooth connected component of the identity $(Z^0)_{sm}$ (\cite[Theorem 9.9]{AHRetalelocal} and \cite[Remark 2.2]{bejleri2024proper}) and factor $\cZ\to \cR$ as $\cZ\to \cZ'\to \cR$ where $\cZ\to \cZ'$ is a gerbe for $(Z^0)_{sm}$, and $\cZ'\to \cR$ is finite and tame \cite[Proposition 2.3]{bejleri2024proper}. Applying \cite{bresciani2024arithmetic} to the generically trivial tame gerbe $\cZ'\to \cR$, after replacing $\cR$ by a further root stack, we can assume that $\cZ' \to \cR$ has a section. Pulling back $\cZ \to \cZ'$ along this section, we obtain a gerbe for $(Z_0)_{sm}$ so it suffices to treat the case in which $(Z^0)_{sm} = Z$ is a smooth commutative connected reductive group, i.e. a torus. Then Proposition \ref{prop_gerbe_banded_by_t_trivial_generically_is_trivial_up_to_further_root} applies so that $\cZ$ and thus $\cG$ is trivialized after replacing $\cR$ by a further root stack. 
 \end{proof}

\section{Proofs of the main theorems and applications}

We are finally ready to prove \Cref{thm:main} and \Cref{thm:main2}.
\begin{proof}[Proof of \Cref{thm:main}]
We first assume that $R$ is henselian. From
\Cref{prop_knowing_the_result_for_poli_pts_suffices} it suffices to treat the case when the generic point of $\spec R$ maps to a polystable point of $\cX$. From \Cref{theorem_reduction_to_gerbe} it suffices to assume that $\cX\to X$ is a gerbe banded by a connected, smooth, linearly reductive group. This case is treated in \Cref{thm_case_of_gerbe_over_root_stack}.
To reduce to the henselian case we use \cite[Theorem B]{rydh2011etale}. Take $\spec A\to \spec R$ the Henselization morphism. We know that the n-th root stack of $\spec A$ at its closed point will admit a lifting to $\cX$. But the following diagram is a pushout, which concludes the argument

\[
\xymatrix{\spec \Frac(A)\ar[r] \ar[d] & \sqrt[n]{\spec A}\ar[d] \\ \spec \Frac(R) \ar[r] &\sqrt[n]{\spec R}.}
\]
\end{proof}

\begin{proof}[Proof of \Cref{thm:main2}]

When $R$ is henselian, this is exactly \Cref{thm_case_of_gerbe_over_root_stack}. To reduce to the henselian case, we can use the same pushout diagram as in the above proof of \Cref{thm:main}. Finally, to handle extensions of groups, suppose we have an exact sequence $1 \to G_1 \to G_2 \to G_3 \to 1$ of reductive groups over a root stack $\cR$ where $G_1$ has tame Weyl group and $G_3$ is special. Let $\cX_2 \to \cR$ be a gerbe banded by $G_2$ and $\spec K \to \cX_2$ be a $K$-point. As in the proof of \Cref{thm_case_of_gerbe_over_root_stack}, after passing to a further root stack of $\cR$, we may assume that $\cX_2 \cong \cB G_2$. Composing with $\cB G_2 \to \cB G_3$ yields a $K$-point of $\cB G_3$ which extends to a $\cR$-point after taking a further root stack by assumption. Now consider $\cX_1 \colon  = \cR \times_{\cB G_3} \cB G_2$ with its induced $K$-point $\spec K \to \cX_1$. This is a gerbe banded by a form of $G_1$ so by assumption, the $K$-point extends after taking a further root stack. Composing with $\cX_1 \to \cX_2 = \cB G_2$ yields the required extension. 
\end{proof}

\begin{proof}[Proof of \Cref{prop:main_global} and \Cref{cor_global}]

For \Cref{prop:main_global}, let $x_1, \ldots, x_m \in C$ be points of indeterminacy for $\varphi$ and let $U$ be the open complement so that $\varphi$ restricts to a morphism $U \to \cX$. Let $R_i = \cO_{C,x_i}$ which is a DVR and let $K = k(C) = \operatorname{Frac}R_i$ be the function field. The $K$-point $\spec K \to \cX$ extends to root stack points $\sqrt[n_i]{\spec R_i} \to \cX$ by properness of $X$ and \Cref{thm:main} or by \Cref{thm:main2} for some integers $n_i$. Let $\cC \to C$ be the global root stack $\sqrt[n_1, \ldots, n_m]{(C, x_1, \ldots, x_m)}$. We wish to extend $U \to \cX$ to a morphism $\cC \to \cX$. By induction on $m$, we can extend the morphism one point at a time so suppose without loss of generality that $m = 1$.  Consider the diagram 
$$
\xymatrix{\spec K \ar[r] \ar[d] & \sqrt[n]{\spec R} \ \ar[d] \\ U \ar[r] & \cC}.
$$
This is a flat Mayer-Vietoris square in the sense of \cite[Definition 1.2]{MV} and $\cC$ is locally excellent by assumption so this square is a pushout square in algebraic stacks by \cite[Theorem A]{MV}. Thus there is a morphism $\cC \to \cX$ by the universal property of pushouts. 

For \Cref{cor_global}, note that $\cX$ is locally of finite presentation by assumption so the given $K$-point $\spec K \to \cX$ spreads out to a morphism $U \to \cX$ for some open set $U \subset C$ \cite[\href{https://stacks.math.columbia.edu/tag/0CMX}{Tag 0CMX}]{stacks-project}. The result then follows from \Cref{prop:main_global}. 
\end{proof}

\begin{proof}[Proof of \Cref{cor:torsors}] By \Cref{thm:main2} and \Cref{rem:bounding}, there exists an $n$ depending only on $G$ such that any $K$-point extends to a $\sqrt[n]{\spec R}$ point of $\cB G$. 
\end{proof}

\begin{proof}[Proof of \Cref{prop_LN} and \Cref{cor:LN_1}] Exactly the same argument as in \cite[Theorem 4.1]{bresciani2024arithmetic} goes through, except we replace \cite[Corollary 3.2]{bresciani2024arithmetic} with Theorem \ref{thm:main} which gives a morphism $\cR\to \cY$ that, if pre-composed with the covering $\spec(R)\to \cR$ and the closed embedding of the closed point $\spec(k_R)\to \spec(R)$, gives the desired point on $\cY$. 

For \Cref{cor:LN_1}, we can apply \Cref{prop_LN} to the rational map $\spec R \dashrightarrow [\spec A/G]$ to get a $k$-point of $[\spec A\otimes_R k/G_k]$ which lifts to a $k$-point of $A\otimes_R k$ by the special assumption.  
\end{proof} 

\begin{proof}[Proof of \Cref{prop_gs_tame}] Let $\cG \in H^2(R,G)$ such that $\cG|_K = 0$. Then $\cG_K \cong \cB G_K$ so there exists a point $\spec K \to \cG$ which by \Cref{thm:main2} extends to a root stack $\cR_n \to \cG$. Composing with $\spec k \to \cR_n$ yields a $k$-point of $\cG|_k$ which exhibits the triviality of $\cG|_k$. 
\end{proof}

\begin{proof}[Proof of \Cref{cor:homogeneous_1} and \Cref{cor:unirationality}] Let $x_K : \spec K \to [V/H]$ be the composition of a $K$-point of $V$ with the quotient map. This exhibits the triviality of the gerbe $[V/H]_K$. By \Cref{prop_gs_tame} the gerbe $[V/H]|_k$ is also trivial and since $H$ is special, any $k$-point of $[V/H]|_k$ lifts to a $k$-point of $V$. 

If $V_K$ is unirational then it admits a $K$-point. Thus $V_k$ also admits a $k$-point. By \cite[Theorem 18.2(ii)]{borel}, $H_k$ is unirational and given a $k$-point $x \in V(k)$, we get a surjection $H_k \to V_k$ by $h \mapsto hx$ which implies $V_k$ is unirational. 
\end{proof}

\begin{remark} In fact Theorem \ref{thm:main2} applied to $[V/H]$ shows something slightly stronger. Given any $K$-point $x_K \in V(K)$, there exists a root extension $R'=R[t^{1/n}]$ where $t$ is a uniformizer and an element $h \in H(R')$ such that $hx_K$ extends to an $R'$-point of $V$. 
\end{remark}

\begin{proof}[Proof of \Cref{cor:K_moduli} and \Cref{cor:bpcy_moduli}]

The $K$-moduli stack $\cM^{\text{K-ss}}$ of $K$-semistable Fano varieties has a proper good moduli space $M^{\text{K-ps}}$ by \cite{smoothable_Kmoduli, open_Kmoduli, gms_kmoduli, liu2022finite}. Thus by \Cref{thm:main}, there exists a root stack $\cR_n$ and a family $\cY \to \cR_n$ of $K$-semistable Fano varieties with polystable central fiber extending $Y_\eta \to \spec K$. The total space has klt singularities by \cite[Theorem 1.3]{odaka} and inversion of adjunction for klt singularities \cite[Sections 16 \& 17]{flipsandabundance} and quotients of klt singularities are klt
so the coarse space $Y \to \spec R$ has klt singularities and central which is a quotient of a $K$-polystable Fano variety by $\bmu_n$. The multplicity $n$ can be deduced by noting that $\cR_n \to \spec R$ is totally ramified of order $n$ (see e.g. \cite[Lemma 7.11]{bejleri2024heightmodulicyclotomicstacks}).

For the moduli of boundary polarized Calabi-Yau surface pairs $\cM^{CY}$, the argument is similar, except here we only have an \emph{asymptotically good moduli space} \cite[Definition 13.1]{ABBDILW}. More precisely, we bound the Cartier index of $K_Y$ to obtain an open substack $\cM^{CY}_m$ which has a good moduli space $M^{CY}_m$. By the proofs of \cite[Theorem 1.3]{ABBDILW} and \cite[Theorem 1.1]{blum2024goodmodulispacesboundary}, the good moduli space is proper and independent of $m$ for $m \gg 0$. Thus we apply \Cref{thm:main2} to $\cM^{CY}_m$ for $m$ sufficiently large. Then the proof is exactly as above using that quotients of slc singularities are slc \cite[Lemma 2.3]{alexeev2012non} and inversion of adjunction for log canonical singularities \cite[Main Theorem]{kawakita}.
\end{proof}

\bibliographystyle{alpha}
\bibliography{v12}
\end{document}